\documentclass[11pt]{article}

\usepackage[T1]{fontenc}
\usepackage[margin=1in]{geometry}
\usepackage{amsmath,amssymb,amsthm,mathtools}
\usepackage[shortlabels]{enumitem}
\usepackage[hidelinks]{hyperref}
\usepackage[expansion=false]{microtype}
\usepackage{xcolor}

\hypersetup{
  pdftitle={Rank--Average-Degree Bound for Graph Energy},
  pdfauthor={Seyed Ahmad Mojallal},
  pdfsubject={Spectral graph theory and graph energy},
  pdfkeywords={graph energy, adjacency rank, average degree}
}

\newtheorem{theorem}{Theorem}[section]
\newtheorem{lemma}[theorem]{Lemma}
\newtheorem{proposition}[theorem]{Proposition}
\newtheorem{corollary}[theorem]{Corollary}
\theoremstyle{remark}

\newcommand{\E}{\mathcal E}
\newcommand{\dd}{\bar d}
\newcommand{\rank}{\operatorname{rank}}
\newcommand{\tr}{\operatorname{tr}}

\title{Rank--Average-Degree Bound for Graph Energy}

\author{
Seyed Ahmad Mojallal
}

\date{}

\begin{document}

\maketitle

\begin{abstract}
We prove that the energy $\E(G)$ of any simple graph $G$ of order $n\ge5$
satisfies
\[
  \E(G)\ge r(G)+\dd(G)-1,
\]
where $r(G)$ and $\dd(G)$ denote, respectively, the rank of the adjacency
matrix and the average degree of $G$.  We also characterize all extremal
graphs.  As consequences, our result settles five previously conjectured
lower bounds for the energy of nonsingular graphs in their stated ranges, namely
\[
 \begin{gathered}
 \E(G)\ge n-1+\dd(G),\qquad
 \E(G)\ge\Delta(G)+\delta(G),\qquad
 \E(G)\ge2\sqrt{\dd(G)(n-1)},\qquad\\
 \E(G)\ge\frac{M_1(G)}{m},\qquad
 \E(G)\ge\frac{M_1(G)}{2m}+\frac{2m}{n},
 \end{gathered}
\]
where $m$ is the number of edges, $\Delta(G)$ and $\delta(G)$ are the maximum and minimum
degrees, and the first Zagreb index
$ M_1(G)$ is the sum of degree squares.
\end{abstract}

\medskip
\noindent\textbf{2020 Mathematics Subject Classification.}
05C50.

\smallskip
\noindent\textbf{Keywords.}
Graph energy, adjacency rank, average degree, nonsingular graph, first Zagreb index, extremal graphs.

\section{Introduction}

Let $G=(V(G),E(G))$ be a finite simple graph with
$|V(G)|=n$ and $|E(G)|=m$.  Let $A(G)$ be the adjacency matrix of $G$,
with eigenvalues $\lambda_1\ge\cdots\ge\lambda_n$.  Its energy,
introduced by Gutman \cite{Gutman1978}, is
\[
  \E(G)=\sum_{i=1}^n |\lambda_i|.
\]
When no confusion can arise, we write $A=A(G)$.  Write
$r(G)=\rank A(G)$ and $\dd(G)=\frac{2m}{n}$,
and, for $v\in V(G)$, write $d_v=d_G(v)$.  We denote the maximum and
minimum degrees by $\Delta(G)$ and $\delta(G)$, respectively.  When no
confusion can arise, we abbreviate $\dd(G)$ to $d$ inside proofs.  Akbari, Dabirian, and Ghasemi
\cite{AkbariDabirianGhasemi2022} conjectured that every nonsingular graph
satisfies
\[
  \E(G)\ge n-1+\dd(G),
\]
with exactly two exceptions of order four, namely $P_4$ and the paw graph.
Equivalently, the conjectured inequality holds for every nonsingular graph
of order $n\ge5$.
This conjecture strengthens an earlier conjecture of Akbari and
Hosseinzadeh \cite{AkbariHosseinzadeh2020}, who proposed that every nonsingular graph satisfies
\[
  \E(G)\ge \Delta(G)+\delta(G),
\]
with equality only for complete graphs.  Indeed,
\[
  n-1+\dd(G)\ge \Delta(G)+\delta(G),
\]
since $\Delta(G)\le n-1$ and $\delta(G)\le\dd(G)$.
More recently, Jahanbani and Gutman \cite{JahanbaniGutman2025} conjectured
that every nonsingular graph satisfies
\begin{equation}\label{eq:JG}
  \E(G)\ge 2\sqrt{\dd(G)(n-1)},
\end{equation}
with equality if and only if $G\cong K_n$.

Recent work shows that lower bounds for the energy of nonsingular graphs
remain an active theme.  Oboudi \cite{Oboudi2024} proved
\[
 \E(G)\ge n-1+\sqrt{2m-n+1}
\]
for graphs having no adjacency eigenvalue in $(-1,1)$, and Rakshith and
Yashaswini \cite{RakshithYashaswini2026} obtained further determinant- and
degree-based lower bounds and extensions of this spectral-gap setting.
In another direction, Das and Ghalavand \cite{DasGhalavand2025}
conjectured that every nonsingular graph satisfies
\[
 \E(G)\ge \frac{M_1(G)}{m}
\]
and
\[
 \E(G)\ge \frac{M_1(G)}{2m}+\frac{2m}{n},
\]
where
\[
 M_1(G):=\sum_{v\in V(G)}d_v^2
\]
is the first Zagreb index.  They further conjectured that equality in
either bound holds if and only if $G\cong K_n$.  As a consequence of
Theorem~\ref{thm:main}, both conjectures are settled; see
Corollary~\ref{cor:zagreb}.

The natural rank extension is the following inequality.

\begin{theorem}\label{thm:main}
Every finite simple graph $G$ of order $n\ge5$ satisfies
\begin{equation}\label{eq:main}
  \E(G)\ge r(G)+\dd(G)-1.
\end{equation}
\end{theorem}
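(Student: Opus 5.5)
We plan to work only with the nonzero eigenvalues. Write $r=r(G)$ and let $\mu_1,\dots,\mu_r$ be the nonzero eigenvalues of $A$, so that $\E(G)=\sum_{i=1}^r|\mu_i|$ and $\sum_i\mu_i^2=2m=nd$.

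The basic tool is a two-moment inequality. For $|\mu_i|\ge1$ we have $|\mu_i|\ge 1+\tfrac{\mu_i^2-1}{\lambda_1+1}$, since $x\mapsto 1+(x^2-1)/(\lambda_1+1)$ lies below $x$ on $[1,\lambda_1]$. Small eigenvalues with $|\mu_i|<1$ cost us, and controlling them is where the work lies. A cleaner starting point is the classical chain
\[
\E(G)\ge \lambda_1+\frac{2m-\lambda_1^2}{\lambda_1}\cdot(\text{correction}),
\]
but we would rather argue as follows. Split off $\lambda_1\ge d$, so that
\[
\E(G)=\lambda_1+\sum_{i\ge2}|\mu_i|.
\]
It then suffices to show $\sum_{i\ge 2}|\mu_i|\ge r-1$ whenever $\lambda_1$ is close to $d$, and more generally to trade excess in $\lambda_1$ against deficiency among the remaining eigenvalues.

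\textbf{Step 1.} Use the fact that the product of the nonzero eigenvalues is, up to sign, a sum over sesquivalent (elementary) spanning subgraphs of an induced $r$-vertex nonsingular subgraph. More precisely, $G$ contains an induced subgraph $H$ on $r$ vertices with $A(H)$ nonsingular. The coefficient $c_r$ of the characteristic polynomial is a nonzero integer, so $\prod_{i=1}^r|\mu_i|\ge1$.

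\textbf{Step 2.} Apply AM--GM to the $r-1$ eigenvalues other than $\mu_1=\lambda_1$:
\[
\sum_{i\ge2}|\mu_i|\ge (r-1)\Bigl(\prod_{i\ge2}|\mu_i|\Bigr)^{1/(r-1)}\ge (r-1)\lambda_1^{-1/(r-1)}.
\]
This alone is too weak, so we combine it with the second moment. Consider the function
\[
f(x)=\min_{\substack{y_2,\dots,y_r>0\\ \prod y_i\ge 1/x,\ \sum y_i^2=nd-x^2}}\sum y_i
\]
and show that $x+f(x)\ge r+d-1$ for $x\in[d,\sqrt{nd}]$. A constrained minimization of this type is attained when all but one $y_i$ are equal, by the standard Lagrange/majorization argument for sum minimization under fixed product and fixed sum of squares. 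This reduces the problem to a two-parameter inequality in $x$ and $t$, which we then verify.

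\textbf{Step 3.} Handle the edge and small cases separately. When $r$ is small ($r\le4$), the graph is a blow-up of a small nonsingular graph, namely $K_2$, $K_3$, $P_4$, $K_4$, or the paw, with possible isolated vertices. The spectrum is then explicit: blowing up vertices into independent sets of sizes $a_j$ scales the spectrum in a computable way. We check \eqref{eq:main} directly there, and this is where the hypothesis $n\ge5$ enters; for instance, $P_4$ itself fails.

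The main obstacle is Step 2. The product constraint $\prod|\mu_i|\ge1$ loses a great deal when many eigenvalues are tiny. If the two-parameter inequality fails in a range, we will instead use the stronger estimate $\E(G)\ge \E(H)$ over induced subgraphs, which holds by interlacing for energy. Combining this with an extremal choice of $H$ maximizing $\E(H)-r(H)$ may let us push the excess edge mass into $\lambda_1$.
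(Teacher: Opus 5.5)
Your Step 2 cannot work, because the inequality it asks for is false. The only information you keep is $\lambda_1\ge d$, $\prod_i|\mu_i|\ge1$ and $\sum_i\mu_i^2=nd$, and these do not force $\sum_i|\mu_i|\ge r+d-1$.

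For instance, take $n=r=100$ and $d=97$, so $nd=9700$ and the target is $196$. Put $y_2=\dots=y_{100}=0.955$ and $x=\sqrt{9700-99y^2}\approx98.03\in[d,\sqrt{nd}]$. Then $x\,y^{99}\approx1.03\ge1$, yet $x+99y\approx192.6<196$. In general, for $d=n-1-2a$ this configuration gives a sum near $2n-2-a-\log n$, which is below the target $2n-2-2a$ whenever $a$ is less than about $\log n$.

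Adding $\operatorname{tr}A=0$ does not rescue the relaxation. One eigenvalue $\sqrt{(n-1)d}$ together with $n-1$ eigenvalues $-\sqrt{d/(n-1)}$ satisfies trace zero, the second-moment identity and $\lambda_1\ge d$. For $n=100$, $d=97$ it also satisfies the determinant constraint (the product is about $35.7$). Yet its total absolute value is $2\sqrt{(n-1)d}<n-1+d$.

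So in the dense range $d>n-2\log n-3$, which is exactly the range left open by Akbari, Dabirian and Ghasemi, no optimization over these spectral moments can succeed. You need graph-theoretic constraints that exclude such spectra. The paper supplies them for connected nonsingular $G$:
\begin{itemize}
\item Kumar--Pragada's bound $\E(G)\ge 2(n-\alpha(G))$, combined with the inertia bounds $p,q\le n-\alpha(G)$;
\item the Liu--Tang--Zhang bound $s^-(G)\ge n-1$, which kills the trace-zero configuration above, where $s^-=d$;
\item separate bookkeeping for positive and negative eigenvalues, leading to a scalar inequality for $p\ge3$;
\item a different complement/Ky Fan argument for $p=2$, a case that lies outside even this strengthened relaxation.
\end{itemize}

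The second missing idea is how to pass from nonsingular graphs to arbitrary rank. Your fallback $\E(G)\ge\E(H)$ for an induced subgraph $H$ discards the average degree. A nonsingular induced $H$ of order $r$ gives at best $r+\dd(H)-1$, and $\dd(H)$ can be far smaller than $\dd(G)$.

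The paper instead proves that $\E(G)-\dd(G)\ge\E(G-v)-\dd(G-v)$ for $n\ge5$. This comes from $\E(G-v)\le\E(G)-e_v$ together with the vertex-energy estimate $e_v\ge(2d_v-\dd(G))/(n-1)$. It then deletes vertices down to a rank-preserving nonsingular induced subgraph of order $r$, or to an order-$5$ induced subgraph when $r<5$, with the order-$5$ case checked directly.

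Your Step 3 classification is also incorrect. A graph of rank at most $4$ need not be a blow-up of $K_2$, $K_3$, $P_4$, $K_4$ or the paw plus isolated vertices. Both $P_5$ (spectrum $\pm\sqrt3,\pm1,0$) and the prism $K_3\square K_2$ have rank $4$ and no two vertices with the same neighbourhood, so they are not such blow-ups.
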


\begin{theorem}[Equality characterization]\label{thm:equality}
Let $G$ be a finite simple graph of order $n\ge5$.  Then equality holds in
\eqref{eq:main} if and only if
\[
  G\cong K_n,
  \qquad\text{or}\qquad
  n\text{ is even and }G\cong \frac n2K_2.
\]
\end{theorem}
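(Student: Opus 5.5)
The \emph{if} direction is a direct computation. For $K_n$ the spectrum is $n-1$ (once) and $-1$ ($n-1$ times), so $\E(K_n)=2(n-1)$, $r(K_n)=n$, $\dd(K_n)=n-1$, and $r+\dd-1=2n-2$. For $\frac n2K_2$ the spectrum is $\pm1$, each $n/2$ times, so $\E=n$, $r=n$, $\dd=1$, and again equality holds. The substance is the \emph{only if} direction. My plan is to rerun the proof of Theorem~\ref{thm:main} under the assumption of equality and to force every intermediate inequality to be tight. I first split according to connectivity, since $\E$ and $r$ are additive over components while $\dd$ is only a weighted average.

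\textbf{Connected case.} I expect the proof of Theorem~\ref{thm:main} to use $\lambda_1\ge\dd$ together with an estimate of the form $\sum_{i\ge2,\,\lambda_i\ne0}|\lambda_i|\ge r(G)-1$. The latter would come from integrality: the product of the nonzero eigenvalues is, up to sign, a nonzero integer coefficient of the characteristic polynomial, and one combines this with AM--GM or with the trace identities $\sum\lambda_i=0$ and $\sum\lambda_i^2=2m$. Tightness of $\lambda_1\ge\dd$ forces $G$ to be $k$-regular with $k=\dd$. Tightness of the AM--GM or integrality step should force all nonzero eigenvalues other than $\lambda_1$ to have equal absolute value, and in fact to equal $-1$. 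Using $\sum\lambda_i=0$ we then get exactly $k$ eigenvalues equal to $-1$, and since $\sum\lambda_i^2=nk$ this gives $k^2+k=nk$, i.e.\ $k=n-1$ and $G\cong K_n$. If the proof instead has a case split, for example according to whether $\lambda_1$ is large relative to $r$, I would analyse tightness in each branch separately and show that every branch other than the one above is strict.

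\textbf{Disconnected case.} Write $G=G_1\cup\dots\cup G_t$ with $n_i=|V(G_i)|$ and $d_i=\dd(G_i)$. For components of order at least $5$ I apply Theorem~\ref{thm:main}. For components of order at most $4$ I use a finite check, recording for each such graph (including $P_4$ and the paw) the slack $\E-r-\dd$. Summing gives $\E(G)\ge r(G)+\sum_i (d_i-1)+(\text{corrections})$, and I compare this with $\dd-1=\sum_i\frac{n_i}{n}d_i-1$. Because $\sum_i d_i\ge\sum_i\frac{n_i}{n}d_i$, the surplus is $\sum_i d_i(1-\frac{n_i}{n})-(t-1)$. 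Equality should then force every component to have $d_i\le1$, isolated vertices to be absent or handled separately, and each component to be $K_2$, which yields $\frac n2K_2$.

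The main obstacle is this bookkeeping for small components, because Theorem~\ref{thm:main} fails at order $4$ (for $P_4$ and the paw) and is not asserted for orders below $5$. I would first tabulate $\E-r-\dd+1$ for all graphs on at most $4$ vertices. I would then show that any deficit of a small component (negative slack) is compensated by the strict surplus $d_i(1-n_i/n)$ or by the other components whenever $n\ge5$. The goal is to prove strict inequality unless all components are $K_2$, or unless $t=1$ and we are in the connected case.
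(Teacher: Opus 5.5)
\paragraph{Verdict.} Your \emph{if} direction is fine. So is the disconnected bookkeeping: your surplus $\sum_i d_i(1-n_i/n)-(t-1)$ equals $\sum_i(1-n_i/n)(d_i-1)$, which is the identity \eqref{eq:component-gap} the paper uses. That part does not even need the connected characterization. A connected component of order at least five has $d_i>1$ and $n_i<n$. An isolated vertex has slack $\E-r-\dd+1=1$, which outweighs its deficit $1-1/n$.

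\paragraph{The gap is the connected case.} You assume Theorem~\ref{thm:main} is proved via $\lambda_1\ge\dd$ together with $\sum_{i\ge2,\,\lambda_i\ne0}|\lambda_i|\ge r(G)-1$, and then force both to be tight. The second inequality is false for $n\ge5$. Take the corona $K_3\circ K_1$, a triangle with one pendant vertex at each corner. It is connected and nonsingular of order $6$, with spectrum $1\pm\sqrt2$ and $(-1\pm\sqrt5)/2$ (the latter two each twice). Hence $\E-\lambda_1=\sqrt2+2\sqrt5-1<5=r-1$. Integrality cannot rescue this: $|\det{}^*A|\ge1$ controls the product of \emph{all} nonzero eigenvalues, and removing a large $\lambda_1$ can leave a product well below $1$. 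So there is no two-step chain whose tightness you can exploit. Your fallback (``analyse tightness in each branch separately'') is where the whole proof lives, and it is not carried out. It also silently covers singular graphs, which are never treated separately in your plan.

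\paragraph{What the missing analysis must cover.} The actual proof of Theorem~\ref{thm:main} reduces singular graphs to nonsingular ones by rank-preserving vertex deletion. For connected nonsingular graphs it then combines:
\begin{itemize}
\item the bound $\E\ge2(n-\alpha)$;
\item the square-energy bounds;
\item the Akbari--Dabirian--Ghasemi ranges;
\item the determinant--variance reduction for $p\ge3$;
\item the sparse-complement argument for $p=2$.
\end{itemize}
Each of these needs its own equality analysis.
\begin{itemize}
\item \textbf{Singular graphs.} These must be shown to give strict inequality. For $n>5$ this needs the strict vertex-energy estimate $e_v>(2d_v-\dd)/(n-1)$ for graphs with an edge (Lemma~\ref{lem:strict-deletion}). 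For $n=5$, where no deletion occurs, a separate integrality argument is needed.
\item \textbf{Integrality.} For connected nonsingular $G$, equality makes $\E/2$ a rational algebraic integer. Hence $\dd$ and $a=(n-1-\dd)/2$ are integers.
\item \textbf{The $\alpha$-bound cases.} The cases $p=2$ and $a\in\{k,q-1\}$ collapse to equality in $\E\ge2(n-\alpha)$. Ruling these out requires an external characterization of that extremal family.
\item \textbf{The case $p\ge3$.} The determinant--variance argument must be rerun with $T=S$; strictness survives only through $\sum_i x_i^2<R^2$, because $k\ge2$.
\item \textbf{The Akbari--Dabirian--Ghasemi ranges.} Their proofs must be reopened to extract strict inequalities. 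This requires separate treatment of $n\ge12$, of $n=11$, of unicyclic graphs, and of $5\le n\le10$ via a new scalar lemma with constant lower bound $h=3$.
\end{itemize}
None of these ingredients appears in your proposal.
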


For a nonsingular graph of order $n\ge5$, Theorem~\ref{thm:main} gives
\[
 \E(G)\ge n-1+\dd(G).
\]
Since
\[
 n-1+\dd(G)\ge\Delta(G)+\delta(G)
\]
and, by AM--GM,
\[
 n-1+\dd(G)\ge2\sqrt{\dd(G)(n-1)},
\]
the theorem settles both earlier conjectured bounds as well.  Equality in
either consequence forces $G\cong K_n$. 

The two Zagreb-index conjectures are recorded explicitly in
Corollary~\ref{cor:zagreb}.

The rank formulation also yields structural lower bounds for graph energy.
For example, combining Theorem~\ref{thm:main} with standard lower bounds
for the adjacency rank gives bounds in terms of the clique number, induced
matching number, total domination number, and diameter; these consequences
are recorded in Corollary~\ref{cor:rank-consequences}.

The order restriction is sharp: $P_4$ is nonsingular and
$\E(P_4)=2\sqrt5<9/2=4+\dd(P_4)-1$.  The bound itself is sharp in two
distinct ways: complete graphs attain equality, and so do perfect matchings.
Theorem~\ref{thm:equality} shows that these are the only extremal graphs.

Two recent theorems are decisive.  Kumar and Pragada
\cite{KumarPragada2026} proved $\E(G)\ge2(n-\alpha(G))$, and Liu, Tang, and
Zhang \cite{LiuTangZhang2026} proved that each of the positive and negative
square energies of a connected graph is at least $n-1$.  We combine these
with two new ideas.  First, a vertex-energy estimate shows that
$\E(G)-\dd(G)$ does not increase under vertex deletion.  This turns a
nonsingular theorem into a rank theorem.  Second, determinant integrality and
the square-energy constraints reduce the nonsingular problem to a scalar
inequality.  The case of two positive eigenvalues, which lies outside that
relaxation, is treated through the sparse complement.

The scalar inequality is proved entirely analytically, without computer-assisted verification.

The paper is organized as follows.  Section~\ref{sec:prelim} collects the
spectral tools used throughout.  Section~\ref{sec:deletion} establishes the
vertex-energy deletion inequality and the order-five base case.
Section~\ref{sec:reduction} develops the determinant--variance reduction, and
Section~\ref{sec:scalar} proves the resulting scalar inequality.
Section~\ref{sec:p2} treats the exceptional case of two positive eigenvalues
through the sparse complement.  Section~\ref{sec:completion} completes the
proof of the rank bound, Section~\ref{sec:equality} determines all graphs
attaining equality, and Section~\ref{sec:consequences} records the Zagreb-index
and structural consequences.

\section{Preliminaries}\label{sec:prelim}

Let
\[
 p=n^+(G),\qquad q=n^-(G),\qquad \eta=n^0(G),
 \qquad r=p+q.
\]
We use
\[
 s^+(G)=\sum_{\lambda_i>0}\lambda_i^2,
 \qquad
 s^-(G)=\sum_{\lambda_i<0}\lambda_i^2.
\]
Since $\tr A=0$, the sum of the positive eigenvalues equals the sum of the
absolute values of the negative eigenvalues, and $\E(G)$ is twice either
sum.  Also
\begin{equation}\label{eq:squares}
 s^+(G)+s^-(G)=\tr A^2=n\dd(G).
\end{equation}

We shall use the following results.

\begin{enumerate}[(i)]
\item For every graph $G$,
\begin{equation}\label{eq:KP}
 \E(G)\ge2\bigl(n-\alpha(G)\bigr)
\end{equation}
by \cite{KumarPragada2026}.
\item If $G$ is connected, then
\begin{equation}\label{eq:LTZ}
 \min\{s^+(G),s^-(G)\}\ge n-1
\end{equation}
by \cite{LiuTangZhang2026}.
\item If $G$ is nonsingular of order at least five, the desired inequality
holds when either $\lambda_1(G)\le7.11$ or
$\dd(G)\le n-2\log n-3$.  These are Theorems 2.1 and 3.3 of
\cite{AkbariDabirianGhasemi2022}.
\end{enumerate}

We also use the standard inertia bounds
\begin{equation}\label{eq:inertia-alpha}
 p\le n-\alpha(G),\qquad q\le n-\alpha(G),
\end{equation}
which follow by interlacing an independent-set principal submatrix.  Standard
facts about interlacing, Ky Fan's principle, and matrix absolute values may be
found in \cite{Bhatia1997,BrouwerHaemers2012}.

For a square matrix $M$, we write
\[
 \det{}^* M:=\prod_{\lambda_i\ne0}\lambda_i
\]
for its pseudodeterminant, that is, the product of its nonzero eigenvalues.
The next observation will be used repeatedly.

\begin{lemma}\label{lem:pseudodet}
Let $M$ be a symmetric integral matrix of rank $r$.  Then
$\det{}^*M$ is a nonzero integer.  In particular, if $M=A(G)$, then
\begin{equation}\label{eq:E-rank}
 \E(G)\ge r.
\end{equation}
Moreover, $M$ has a nonsingular principal submatrix of order $r$.
\end{lemma}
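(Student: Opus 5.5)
The argument goes through the characteristic polynomial. Since $M$ is a symmetric integral matrix, its characteristic polynomial
\[
 \chi_M(x)=\det(xI-M)=x^n+c_1x^{n-1}+\cdots+c_n
\]
has integer coefficients. Symmetry makes $M$ diagonalizable, so the multiplicity of $0$ as a root of $\chi_M$ equals $n-\rank M=n-r$. Hence $c_{r+1}=\cdots=c_n=0$ and $c_r\neq0$. By Vieta, the nonzero eigenvalues $\lambda_1,\dots,\lambda_r$ are the roots of $x^r+c_1x^{r-1}+\cdots+c_r$, so
\[
 (-1)^r c_r=\prod_{\lambda_i\ne0}\lambda_i=\det{}^*M .
\]
This is a nonzero integer.

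For the energy bound, take $M=A(G)$ and apply AM--GM to the $r$ positive numbers $|\lambda_i|$, $\lambda_i\neq0$:
\[
 \E(G)=\sum_{\lambda_i\ne0}|\lambda_i|\ \ge\ r\Bigl(\prod_{\lambda_i\ne0}|\lambda_i|\Bigr)^{1/r}=r\,|\det{}^*A|^{1/r}\ \ge\ r,
\]
because $|\det{}^*A|\ge1$. If $r=0$, then \eqref{eq:E-rank} is trivial.

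For the principal submatrix, we use the classical identity that $c_k$ equals $(-1)^k$ times the sum of all principal $k\times k$ minors of $M$. This follows by expanding $\det(xI-M)$ multilinearly in the columns. Taking $k=r$, the sum of the principal $r\times r$ minors equals $(-1)^r c_r\neq0$. Therefore at least one principal $r\times r$ minor is nonzero, and the corresponding principal submatrix is nonsingular of order $r$.

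No step here is a real obstacle. The only point that needs care is justifying that the algebraic multiplicity of $0$ equals $n-r$, which is exactly where symmetry, through diagonalizability, enters. The rest is Vieta's formulas together with the principal-minor expansion of the characteristic polynomial coefficients.
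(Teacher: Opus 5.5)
Your proof is correct and follows essentially the same route as the paper: the coefficient of $x^{n-r}$ in $\chi_M$ is, up to sign, both $\det{}^*M$ and the sum of the principal $r\times r$ minors, and AM--GM on the $r$ nonzero $|\lambda_i|$ then gives $\E(G)\ge r$. You also appeal explicitly to diagonalizability to show that $0$ has multiplicity exactly $n-r$, a step the paper leaves implicit.
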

\begin{proof}
If $r=0$, the assertions are immediate.  Assume $r\ge1$.
Since $M$ has rank $r$, its characteristic polynomial has the form
\[
 \chi_M(x)
 =
 x^{n-r}\prod_{i=1}^r(x-\lambda_i),
\]
where $\lambda_1,\ldots,\lambda_r$ are the nonzero eigenvalues of $M$.
Hence the coefficient of $x^{n-r}$ is, up to sign,
$\prod_{i=1}^r\lambda_i$.
By the principal-minor expansion of the characteristic polynomial, the same
coefficient is, up to sign, the sum of all principal minors of order $r$.
It is therefore a nonzero integer, and at least one of those principal
minors is nonzero.  In particular,
\[
 \left|\prod_{i=1}^r\lambda_i\right|\ge1.
\]
Finally, AM--GM applied to the absolute values of the $r$ nonzero
eigenvalues gives \eqref{eq:E-rank}.
\end{proof}

\section{Vertex energy and rank-preserving deletion}\label{sec:deletion}

For a vertex $v$, define its vertex energy by
\[
 e_v=(|A|)_{vv},\qquad |A|=(A^2)^{1/2}.
\]

\begin{lemma}\label{lem:vertex-energy}
Let $G$ be a graph of order $n\ge5$ and average degree
$d=\dd(G)$. Then, for every $v\in V(G)$,
\begin{equation}\label{eq:vertex-energy}
  e_v\ge \frac{2d_v-d}{n-1},
\end{equation}
where $d_v=d_G(v)$.
\end{lemma}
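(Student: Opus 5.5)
The plan is to work with the spectral decomposition
\[
|A|=\sum_{i=1}^n|\lambda_i|\,u_iu_i^{T},\qquad A^2=\sum_{i=1}^n\lambda_i^2\,u_iu_i^{T},
\]
where $u_1,\dots,u_n$ are orthonormal eigenvectors, with $u_1$ chosen as a nonnegative Perron vector. The two diagonal identities I will use are
\[
e_v=\sum_i|\lambda_i|\,u_{iv}^2,\qquad d_v=(A^2)_{vv}=\sum_i\lambda_i^2u_{iv}^2 .
\]
The guiding idea is the scalar inequality $|\lambda|\ge \lambda^2/\rho$, valid whenever $|\lambda|\le\rho$. I will apply it to every eigenvalue except $\lambda_1$, whose contribution I keep exactly.

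Since every eigenvalue satisfies $|\lambda_i|\le\lambda_1\le n-1$, the first step gives
\[
e_v\ \ge\ \lambda_1u_{1v}^2+\frac{d_v-\lambda_1^2u_{1v}^2}{n-1}
\ =\ \frac{d_v}{n-1}+\lambda_1u_{1v}^2\Bigl(1-\frac{\lambda_1}{n-1}\Bigr).
\]
The last term is nonnegative, so $e_v\ge d_v/(n-1)$. This already proves \eqref{eq:vertex-energy} whenever $d_v\le d$, because then $d_v\ge 2d_v-d$. It therefore remains to gain an extra $(d_v-d)/(n-1)$ at vertices of above-average degree.

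For that case I would sharpen the estimate on the non-principal part. The non-principal eigenvalues are exactly the eigenvalues of $A$ restricted to $u_1^{\perp}$. A natural bound there is $|\lambda_i|\le n-\lambda_1$ for $i\ge2$, which I would get by comparing with the complement: for $x\perp u_1$ one has $x^TAx\ge -x^TJx+x^T(-I)x\ge -\|x\|^2-\ldots$, and more precisely the Weyl/Courant inequality applied to $A+\bar A=J-I$. Alternatively I would use $\sum_{i\ge2}\lambda_i^2=nd-\lambda_1^2$ together with $\lambda_1\ge d$ (Rayleigh quotient with the all-ones vector). Inserting $\rho'=\max_{i\ge2}|\lambda_i|$ in place of $n-1$ gives
\[
e_v\ge \lambda_1u_{1v}^2+\frac{d_v-\lambda_1^2u_{1v}^2}{\rho'}.
\]
I would then bound $\lambda_1u_{1v}^2$ from below via the eigen-equation $\lambda_1u_{1v}=\sum_{w\sim v}u_{1w}$, and relate $\sum_v u_{1v}^2d_v$ to $\lambda_1^2$.

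The main obstacle is converting these estimates into the specific average-degree term $-d/(n-1)$, uniformly in $v$. The quantity $\lambda_1u_{1v}^2$ is controlled only on average: $\sum_v\lambda_1u_{1v}^2=\lambda_1\ge d$. My first attempt would be to test positive semidefiniteness of $|A|-A$ and $|A|+A$ against vectors of the form $x=\mathbf e_v+t\mathbf 1$. This produces inequalities of the form
\[
e_v+2t\sum_w(|A|)_{vw}+t^2\,\E(G)\ \ge\ \pm\bigl(2t\,d_v+t^2\,nd\bigr).
\]
The mixed term would be bounded using $\sum_w(|A|)_{vw}^2=d_v$ and Cauchy--Schwarz, and $t$ then optimized (I expect $t\approx -1/(n-1)$) so that the $nd$ term yields the $-d/(n-1)$ correction. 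If this does not close, the fallback is the case split $\lambda_1\le n-2$ versus $\lambda_1>n-2$. In the second case $G$ is very dense, and I would use the complement directly, where the hypothesis $n\ge5$ should enter to absorb the small additive error.
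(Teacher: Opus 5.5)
Your treatment of the case $d_v\le d$ is correct. The bound $|\lambda|\ge\lambda^2/(n-1)$ on the spectrum gives $e_v\ge d_v/(n-1)\ge(2d_v-d)/(n-1)$. The case $d_v>d$, however, is the whole content of the lemma, and it is left open: none of the routes you list closes it.

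\emph{The one-sided minorant is too weak.} Any estimate built on $|\lambda|\ge\lambda^2/M$ with $M\ge\max_i|\lambda_i|=\lambda_1$ yields at best $e_v\ge d_v/\lambda_1$. For $G=K_5-e$ one has $\lambda_1=1+\sqrt7$, so a vertex of degree $4$ gets $d_v/\lambda_1\approx1.097$. The target is $(2d_v-d)/(n-1)=1.1$, so this fails even though the lemma itself holds there.

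\emph{Keeping the $\lambda_1$-term exact does not help.} Since $\rho'\le\lambda_1$, the quantity $\lambda_1u_{1v}^2+(d_v-\lambda_1^2u_{1v}^2)/\rho'$ is \emph{decreasing} in $u_{1v}^2$. A lower bound on $\lambda_1u_{1v}^2$ is therefore useless; you would need an upper bound. With only $\lambda_1^2u_{1v}^2\le d_v$ available, the estimate collapses back to $d_v/\lambda_1$.

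\emph{The claimed spectral bound is false.} The inequality $|\lambda_i|\le n-\lambda_1$ for $i\ge2$ fails because graph spread can exceed $n$. For example, $K_6\vee\overline{K_3}$ has $\lambda_1,\lambda_9=(5\pm\sqrt{97})/2$, so $|\lambda_9|\approx2.42>9-\lambda_1\approx1.58$.

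\emph{The remaining routes are not carried through.} In the positive-semidefiniteness test, $\mathbf 1^T|A|\mathbf 1$ is not $\E(G)$, and the optimisation is never completed. The fallback ``$\lambda_1\le n-2$'' with the crude bound $e_v\ge d_v/(n-2)$ already fails for the centre of a star.

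The missing idea is to use both ends of the spectral interval $[-\sigma,\rho]$, where $\rho=\lambda_1$ and $-\sigma=\lambda_n$. Use a quadratic minorant of $|x|$ that carries a linear term:
\[
 |x|\ge\frac{2x^2+(\sigma-\rho)x}{\rho+\sigma}\qquad(x\in[-\sigma,\rho]).
\]
Apply it to $A$ by functional calculus and read off the $(v,v)$ entry. The linear term vanishes because $A_{vv}=0$, leaving $e_v\ge2d_v/(\rho+\sigma)$. This gains up to a factor $2$ over $d_v/\lambda_1$; for $K_5-e$ it gives $4/\sqrt7\approx1.51$.

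The average degree then enters through
\[
 (\rho+\sigma)^2\le2(\rho^2+\sigma^2)\le2\operatorname{tr}A^2=2nd,
\]
so $e_v\ge2d_v/\sqrt{2nd}$. Put $y=2d_v/d>1$ and use $d\le2(n-1)/y$, which follows from $d_v\le n-1$. After squaring, the claim reduces to
\[
 \frac{y^3}{(y-1)^2}\ge\frac{4n}{n-1}.
\]
This holds because the left side has minimum $27/4$ at $y=3$, and $27/4>4n/(n-1)$ for $n\ge5$.
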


\begin{proof}
The assertion is immediate if $G$ is empty.  Assume therefore that $G$
has at least one edge, and let $\rho$ and $-\sigma$ be the largest and smallest
eigenvalues of its adjacency matrix $A$, respectively.  Thus
$\rho,\sigma>0$ and the spectrum of $A$ is contained in
$[-\sigma,\rho]$.

For every $x\in[-\sigma,\rho]$,
\begin{equation}\label{eq:scalar-vertex}
 |x|
 \ge
 \frac{2x^2+(\sigma-\rho)x}{\rho+\sigma}.
\end{equation}
Indeed, for $x\ge0$, the difference between the left- and right-hand sides
is
\[
 \frac{2x(\rho-x)}{\rho+\sigma}\ge0,
\]
whereas for $x\le0$ it is
\[
 \frac{2(-x)(\sigma+x)}{\rho+\sigma}\ge0.
\]

Since $A$ is real symmetric, applying \eqref{eq:scalar-vertex} to each
eigenvalue of $A$ by spectral functional calculus gives
\[
 |A|
 \succeq
 \frac{2A^2+(\sigma-\rho)A}{\rho+\sigma},
\]
where $|A|=(A^2)^{1/2}$.  Taking the $(v,v)$ entry and using
$A_{vv}=0$ yields
\begin{equation}\label{eq:local-spread}
 e_v=(|A|)_{vv}
 \ge
 \frac{2(A^2)_{vv}}{\rho+\sigma}
 =
 \frac{2d_v}{\rho+\sigma},
\end{equation}
since
\[
 (A^2)_{vv}=d_v.
\]

Moreover,
\[
 \rho^2+\sigma^2
 \le
 \sum_{i=1}^n\lambda_i^2
 =
 \operatorname{tr}(A^2)
 =
 nd.
\]
Consequently,
\begin{equation}\label{eq:spread}
 (\rho+\sigma)^2
 \le
 2(\rho^2+\sigma^2)
 \le
 2nd,
\end{equation}
and hence
\[
 \rho+\sigma\le\sqrt{2nd}.
\]

If $2d_v\le d$, then the right-hand side of
\eqref{eq:vertex-energy} is nonpositive, while $e_v\ge0$, so there is
nothing to prove.  Suppose therefore that $2d_v>d$, and set
\[
 y:=\frac{2d_v}{d}>1.
\]
By \eqref{eq:local-spread} and \eqref{eq:spread},
\[
 e_v
 \ge
 \frac{2d_v}{\sqrt{2nd}}
 =
 y\sqrt{\frac{d}{2n}}.
\]
Thus it remains to show
\[
 y\sqrt{\frac{d}{2n}}
 \ge
 \frac{d(y-1)}{n-1},
\]
because
\[
 \frac{2d_v-d}{n-1}
 =
 \frac{d(y-1)}{n-1}.
\]

Since $d_v\le n-1$ and $d_v=yd/2$, we have
\[
 d\le\frac{2(n-1)}{y}.
\]
Both sides of the desired inequality are positive, so after squaring it is
enough to prove
\[
 (n-1)^2y^2
 \ge
 2nd(y-1)^2.
\]
Using the preceding upper bound on $d$, it is sufficient that
\[
 (n-1)y^3\ge4n(y-1)^2,
\]
or equivalently,
\[
 \frac{y^3}{(y-1)^2}\ge\frac{4n}{n-1}.
\]
Now
\[
 f(y):=\frac{y^3}{(y-1)^2},
 \qquad y>1,
\]
satisfies
\[
 \frac{f'(y)}{f(y)}
 =
 \frac{3}{y}-\frac{2}{y-1}
 =
 \frac{y-3}{y(y-1)}.
\]
Hence $f$ attains its minimum at $y=3$, and
\[
 \min_{y>1}\frac{y^3}{(y-1)^2}
 =
 \frac{27}{4}.
\]
Finally, for $n\ge5$,
\[
 \frac{27}{4}
 >
 \frac{4n}{n-1}.
\]
This proves \eqref{eq:vertex-energy}.
\end{proof}

\begin{lemma}[Deletion monotonicity]\label{lem:deletion}
For every vertex $v$ of a graph of order $n\ge5$,
\begin{equation}\label{eq:deletion}
 \E(G)-\dd(G)\ge \E(G-v)-\dd(G-v).
\end{equation}
\end{lemma}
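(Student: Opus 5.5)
We combine the vertex-energy estimate of Lemma~\ref{lem:vertex-energy} with the classical fact that energy does not increase under deletion of a vertex, in the sharper form that the drop is at least $e_v$. Write $A=A(G)$ and $B=A(G-v)$, so $B$ is the principal submatrix of $A$ obtained by deleting row and column $v$. Let $P$ be the orthogonal projection onto the coordinates of $V(G)\setminus\{v\}$. Then
\[
 \E(G)=\tr|A| = e_v+\tr\bigl(P|A|P\bigr).
\]
The key inequality to establish is
\[
 \tr\bigl(P|A|P\bigr)\ge \tr|B| = \E(G-v).
\]
This follows from Ky Fan's principle. Write $B=B^+-B^-$ with $B^\pm\succeq0$ and spectral projections $Q^\pm$ (onto the positive and negative eigenspaces of $B$). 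Then $\tr|B|=\tr(Q^+B)-\tr(Q^-B)$. Extend $Q^\pm$ by zero to act on $\mathbb R^n$; these remain orthogonal projections, and $Q^\pm B=Q^\pm A$ on the relevant block. Since $\pm A\preceq|A|$, we get
\[
 \tr(Q^+A)\le\tr(Q^+|A|),\qquad -\tr(Q^-A)\le\tr(Q^-|A|).
\]
Adding these, and using $Q^++Q^-\preceq P$ together with $|A|\succeq0$, gives $\tr|B|\le\tr(P|A|P)$. Hence
\[
 \E(G)-\E(G-v)\ge e_v.
\]

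It remains to compare average degrees. With $m$ edges, $G-v$ has $m-d_v$ edges on $n-1$ vertices. Writing $d=\dd(G)=2m/n$,
\[
 \dd(G)-\dd(G-v)=d-\frac{nd-2d_v}{n-1}=\frac{2d_v-d}{n-1}.
\]
Lemma~\ref{lem:vertex-energy} states exactly that $e_v\ge(2d_v-d)/(n-1)$. Combining the two displays gives
\[
 \E(G)-\E(G-v)\ge\dd(G)-\dd(G-v),
\]
which is \eqref{eq:deletion} after rearranging.

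The only delicate step is the trace inequality $\tr(P|A|P)\ge\tr|B|$, namely the pinching/Ky Fan argument. Everything else is bookkeeping. The case where $G$ has no edges is trivial, and the hypothesis $n\ge5$ enters solely through Lemma~\ref{lem:vertex-energy}.
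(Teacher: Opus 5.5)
Your proof is correct and is essentially the paper's argument. The paper also shows $\E(G-v)\le\E(G)-e_v$, by summing $|u_j^TAu_j|\le u_j^T|A|u_j$ over a zero-extended orthonormal eigenbasis of $A(G-v)$; this is your $\pm A\preceq|A|$ step, grouped by spectral projections. It then combines $\dd(G)-\dd(G-v)=(2d_v-\dd(G))/(n-1)$ with Lemma~\ref{lem:vertex-energy}, exactly as you do.
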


\begin{proof}
Let $u_1,\ldots,u_{n-1}$ be an orthonormal eigenbasis for the principal
compression $A(G-v)$, extended by zero in coordinate $v$.  Then
\[
 \E(G-v)=\sum_j |u_j^TAu_j|
 \le\sum_j u_j^T|A|u_j=\E(G)-e_v.
\]
Moreover,
\[
 \dd(G)-\dd(G-v)=\frac{2d_v-\dd(G)}{n-1}.
\]
Now apply Lemma~\ref{lem:vertex-energy}.
\end{proof}

\begin{lemma}[Strict deletion]\label{lem:strict-deletion}
Let $G$ be a graph of order $n\ge5$ with at least one edge. Then, for every
vertex $v\in V(G)$,
\begin{equation}\label{eq:strict-deletion}
 \E(G)-\dd(G)>\E(G-v)-\dd(G-v).
\end{equation}
\end{lemma}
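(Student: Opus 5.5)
Strict deletion follows from the proof of Lemma~\ref{lem:deletion} once we show that one of its two inequalities is strict whenever $G$ has an edge. That proof gives
\[
 \E(G)-\dd(G)-\bigl(\E(G-v)-\dd(G-v)\bigr)
 \ge
 e_v-\frac{2d_v-\dd(G)}{n-1},
\]
with the compression step $\E(G-v)\le\E(G)-e_v$ contributing a nonnegative slack. The plan is to show that the vertex-energy bound \eqref{eq:vertex-energy} is strict, $e_v>(2d_v-d)/(n-1)$, whenever $G$ has at least one edge. The compression slack is not needed.

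There are two cases according to the sign of $2d_v-d$.

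\emph{Case $2d_v\le d$.} The right-hand side is $\le0$, so it suffices that $e_v>0$ or $2d_v<d$.
\begin{itemize}
\item If $d_v\ge1$, then \eqref{eq:local-spread} gives $e_v\ge 2d_v/(\rho+\sigma)>0$.
\item If $d_v=0$, then $2d_v-d=-d<0$, because $G$ has an edge. So the right-hand side is strictly negative while $e_v\ge0$.
\end{itemize}
Either way the inequality is strict.

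\emph{Case $2d_v>d$.} Here I would trace the chain in the proof of Lemma~\ref{lem:vertex-energy}. It used $d\le 2(n-1)/y$ and then
\[
 \frac{y^3}{(y-1)^2}\ge\frac{27}{4}>\frac{4n}{n-1}.
\]
The last inequality is strict for $n\ge5$, since $4n/(n-1)\le5<27/4$. Chasing it back:
\begin{itemize}
\item We get $(n-1)y^3>4n(y-1)^2$, using $y-1>0$.
\item Combined with $d\le 2(n-1)/y$, this gives $(n-1)^2y^2\ge 2nd\,y^3(y-1)^2/\bigl(y\cdot y\bigr)\cdots$; more carefully, $(n-1)^2y^2>2nd(y-1)^2$ strictly. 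The strictness survives the substitution because the factor $(y-1)^2>0$ multiplies a strictly positive gap.
\item Taking square roots of positive quantities preserves strictness, so $y\sqrt{d/(2n)}>d(y-1)/(n-1)$.
\item Finally $e_v\ge y\sqrt{d/(2n)}$, so $e_v>(2d_v-d)/(n-1)$.
\end{itemize}

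The main obstacle is bookkeeping: confirming that each step in the chain is either strict or preserves strictness, and never multiplies by a possibly zero factor. In particular, the step from $d\le2(n-1)/y$ to the cubic inequality must be written as
\[
 2nd(y-1)^2\le \frac{4n(n-1)(y-1)^2}{y}<(n-1)^2y^2,
\]
where the strict inequality is the cubic one divided by $y/(n-1)$. This makes the strictness transparent.

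Combining either case with $\E(G-v)\le\E(G)-e_v$ and $\dd(G)-\dd(G-v)=(2d_v-\dd(G))/(n-1)$ yields \eqref{eq:strict-deletion}.
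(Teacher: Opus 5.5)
Your proof is correct and follows essentially the paper's route. You get all the strictness from the vertex-energy bound $e_v>(2d_v-d)/(n-1)$, and you handle the case $2d_v>d$ by tracing back the strict inequality $27/4>4n/(n-1)$; your chain $2nd(y-1)^2\le 4n(n-1)(y-1)^2/y<(n-1)^2y^2$ is right, and the garbled line before it should simply be deleted. In the case $2d_v\le d$ you split into $d_v=0$ and $d_v\ge1$, using \eqref{eq:local-spread} for $e_v>0$, where the paper splits into $2d_v<d$ and $2d_v=d$ and uses the spectral expansion of $e_v$; this is only a cosmetic difference.
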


\begin{proof}
It is enough to observe that the estimate in
Lemma~\ref{lem:vertex-energy} is strict whenever $G$ has at least one
edge.  Put $d=\dd(G)$ and $d_v=d_G(v)$.  If $2d_v<d$, then the
right-hand side of \eqref{eq:vertex-energy} is negative, whereas
$e_v\ge0$.

If $2d_v=d$, then $d>0$ and hence $d_v>0$.  Since
\[
 d_v=(A^2)_{vv}=\sum_i \lambda_i^2u_i(v)^2>0,
\]
there is some $i$ for which $\lambda_i\ne0$ and $u_i(v)\ne0$.  Therefore
\[
 e_v=\sum_i |\lambda_i|u_i(v)^2>0,
\]
and again the inequality is strict.

Finally, if $2d_v>d$, the last numerical step in the proof of
Lemma~\ref{lem:vertex-energy} is strict, since
\[
 \frac{27}{4}>\frac{4n}{n-1}\qquad(n\ge5).
\]
Tracing the preceding inequalities back therefore gives
\[
 e_v>\frac{2d_v-d}{n-1}.
\]
Thus, in all three cases,
\[
 e_v>\frac{2d_v-d}{n-1}.
\]

Combining this with
\[
 \E(G-v)\le\E(G)-e_v
\]
and
\[
 \dd(G)-\dd(G-v)=\frac{2d_v-d}{n-1}
\]
gives \eqref{eq:strict-deletion}.
\end{proof}

We shall also need a small base case.

\begin{proposition}\label{prop:n5}
Every graph of order five satisfies \eqref{eq:main}.
\end{proposition}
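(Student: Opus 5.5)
The plan is to reduce Proposition~\ref{prop:n5} to a short case analysis driven by the rank $r=r(G)\in\{0,2,3,4,5\}$ (rank $1$ is impossible for a symmetric zero-diagonal matrix), combined with the general tools already available: $\E(G)\ge r$ from Lemma~\ref{lem:pseudodet}, the bound $\E(G)\ge 2(n-\alpha(G))$ from \eqref{eq:KP}, and, where needed, direct inspection of the finitely many graphs on five vertices. Since $n=5$, the target is $\E(G)\ge r+\tfrac{2m}{5}-1$, with $m\le 10$.

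\emph{Low-rank and sparse cases.} If $r=0$, then $G$ is empty and both sides are zero up to the $-1$, which is trivially fine. For $r\ge2$, Lemma~\ref{lem:pseudodet} gives $\E(G)\ge r$, so the inequality is automatic whenever $\tfrac{2m}{5}\le1$, that is, whenever $m\le2$. More generally, I would sharpen Lemma~\ref{lem:pseudodet} via AM--GM on the nonzero eigenvalues together with $\sum\lambda_i^2=2m$. Concretely, $\E(G)^2=\sum\lambda_i^2+\sum_{i\ne j}|\lambda_i\lambda_j|\ge 2m+r(r-1)\,|\det{}^*A|^{2/r}\ge 2m+r(r-1)$. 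Thus it suffices to have
\[
 2m+r(r-1)\ge \Bigl(r+\tfrac{2m}{5}-1\Bigr)^2 .
\]
This is a quadratic inequality in $m$ for each fixed $r$, and it should dispose of most pairs $(r,m)$. For example, with $r=5$ one needs $2m+20\ge(4+2m/5)^2$, which holds for $m$ up to about $5$. With $r=2$ one needs $2m+2\ge(1+2m/5)^2$, which holds for all $m\le10$. The rank-$2$ graphs (complete bipartite graphs plus isolated vertices) are therefore done outright.

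\emph{Dense cases.} For larger $m$, with $r\in\{3,4,5\}$, I would use \eqref{eq:KP}. When $m\ge7$, the independence number is at most $2$, so $\E(G)\ge 6$, while the target is at most $r-1+4=r+3$. This settles $r=3$ when $m\ge7$. The remaining dense cases with $r=4,5$ would need $\E\ge 7$ or $\E\ge 8$. For $K_5$ equality holds ($\E=8=5+4-1$), and $K_5-e$ and a few others can be checked from their explicit spectra. I would also try the refined estimate $\E(G)^2\ge 2m+r(r-1)|\det{}^*A|^{2/r}$ using $|\det{}^*A|\ge2$ when the determinant is known to be larger: for $n=5$, $\det A$ is even in many dense cases, since it counts signed spanning elementary subgraphs, which are built from $K_2$'s and cycles, with cycles counted twice.

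\emph{Main obstacle.} The remaining step is the finite leftover list of five-vertex graphs with $r\in\{4,5\}$ and moderate-to-large $m$, roughly $m\in\{5,\dots,9\}$, where neither analytic bound suffices. There are few such graphs, since there are only $34$ graphs on five vertices, so I would simply tabulate their spectra or energies (e.g.\ $C_5$ with $\E=2+2\sqrt5\approx6.47\ge 5+2-1$, $W_4$, $K_5-e$, the bull, the house) and verify each directly. I expect this enumeration to be the bulk of the work.
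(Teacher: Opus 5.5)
Your plan has a genuine gap. The analytic reductions cover far less than you claim, so the residual list is misidentified, and the residual check is never carried out, even though it ends up carrying most of the proposition. A certified check of all $34$ graphs would of course suffice, but that is not what you have.

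For $r=5$, the condition $2m+20\ge(4+2m/5)^2$ is equivalent to $2m^2+15m-50\le0$, i.e.\ $m\le5/2$. A nonsingular graph of order five has $m\ge4$, since it contains a spanning $K_3\cup K_2$ or $C_5$. So with $|\det{}^*A|\ge1$ this bound handles no rank-five graph at all.

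For $r=4$ the bound gives only $m\le3$. So $P_5$, the fork, and the paw plus an isolated vertex are left uncovered. Each has $m=4$, $\alpha=3$ and target $23/5$, while $\E\ge2(n-\alpha)$ gives only $4$. They fall outside both your analytic cases and the range $m\in\{5,\dots,9\}$ you reserve for enumeration.

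The claim that $m\ge7$ forces $\alpha(G)\le2$ is false. $K_{1,1,3}$ has $m=7$, $\alpha=3$, $r=3$, so the independence bound gives $4<24/5=r+\dd-1$. Nothing in your plan covers it, even though $\E(K_{1,1,3})=6$. The $r=2$ claim ``for all $m\le10$'' is also false (it needs $m\le8$), but this is harmless since rank-two graphs of order five have $m\le6$.

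Your parity remark is stronger than you state and repairs part of this. For odd $n$ every spanning elementary subgraph contains a cycle, so $\det A$ is always even. With $|\det A|\ge2$, the rank-five AM--GM bound becomes valid for $m\le5$. Rank five with $m\ge6$ (other than $K_5$) still remains.

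The idea you are missing is $\E(G)\ge2\lambda_1(G)\ge2\dd(G)$. Together with $\E(G)\ge r$, it disposes of every graph with $\dd\le1$ or $\dd\ge r-1$, leaving only $1<d<r-1$, where $d=\dd(G)$.

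Your AM--GM estimate then finishes $r=3$, since $5d+6-(d+2)^2=(2-d)(d+1)>0$.

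For $r=4$ the paper splits by inertia:
\begin{itemize}
\item For $(1,3)$ or $(3,1)$, interlacing gives $\alpha\le2$, hence $\E\ge6>d+3$.
\item For $(2,2)$, the trace condition $a+b=c+d'$ yields $\E(G)^2=10d+4(X+Y)$ with $XY=|\det{}^*A|\ge1$. So $\E(G)^2\ge10d+8>(d+3)^2$ for $1<d<3$.
\end{itemize}
Your generic bound $\E(G)^2\ge5d+12$ is weaker precisely because it ignores this balance between positive and negative eigenvalues, which doubles the $\tr A^2$ term.

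For $r=5$ the paper does not argue directly either. It observes $\lambda_1\le4<7.11$ and invokes \cite[Theorem~2.1]{AkbariDabirianGhasemi2022}. So a finite check or external input is genuinely needed only for nonsingular five-vertex graphs with $1<d<4$. If you keep the enumeration route, you must actually perform it on that list, with rigorous bounds for the irrational energies.
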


\begin{proof}
Write $d=\dd(G)$ and $r=r(G)$.  Besides \eqref{eq:E-rank}, we have
$\E(G)\ge2\lambda_1(G)\ge2d$.  Thus it is enough to consider
$1<d<r-1$.

If $r=3$, let $\lambda_1,\lambda_2,\lambda_3$ denote the nonzero
eigenvalues.  By Lemma~\ref{lem:pseudodet},
$|\det{}^*A|=|\lambda_1\lambda_2\lambda_3|\ge1$.  The geometric mean of
the three pairwise products $|\lambda_i\lambda_j|$ is therefore
$|\det{}^*A|^{2/3}\ge1$.  Hence, by AM--GM,
\[
 \sum_{i<j}|\lambda_i\lambda_j|\ge\binom32=3.
\]
Consequently,
\[
 \E(G)^2
 =\sum_i\lambda_i^2
 +2\sum_{i<j}|\lambda_i\lambda_j|
 \ge 5d+2\binom32
 =5d+6,
\]
and
\[
 5d+6-(d+2)^2=(2-d)(d+1)>0.
\]
Let $r=4$.  For inertia $(n^+, n^-)=(1,3)$ or $(3,1)$,
\eqref{eq:inertia-alpha} and \eqref{eq:KP} give $\E(G)\ge6>d+3$.
For $(n^+, n^-)=(2,2)$, let $a,b$ be the positive eigenvalues,  and let $c,d'$ be the magnitudes of the negative eigenvalues, and put
\[
  X=ab,\qquad Y=cd'.
\]
Since $\operatorname{tr}A=0$,
\[
  a+b=c+d'=:T,
\]
and hence $\E(G)=2T$.  Moreover,
\[
  5d=\sum_i\lambda_i^2=(a^2+b^2)+(c^2+d'^2)
     =2T^2-2(X+Y).
\]
Therefore
\[
  \E(G)^2=4T^2=10d+4(X+Y).
\]
By Lemma~\ref{lem:pseudodet},
\[
  XY=\left|\prod_{\lambda_i\ne0}\lambda_i\right|\ge1,
\]
so $X+Y\ge2$ by AM--GM.  Consequently,
\[
  \E(G)^2\ge10d+8.
\]
Finally,
\[
  10d+8-(d+3)^2=3-(d-2)^2>0,
\]
because $1<d<3$.  Hence $\E(G)>d+3=r+d-1$.

Finally, if $r=5$, then $\lambda_1(G)\le4<7.11$, so the result follows from
\cite[Theorem 2.1]{AkbariDabirianGhasemi2022}.  The remaining ranks were
already covered by $d\le1$ or $d\ge r-1$.
\end{proof}

\section{A determinant--variance reduction}\label{sec:reduction}

In this section $G$ is connected and nonsingular.  Let
$\rho=\lambda_1(G)$, and suppose, for a contradiction, that
\begin{equation}\label{eq:counter}
 \E(G)<n-1+\dd(G).
\end{equation}
Put
\begin{equation}\label{eq:aST}
 a=\frac{n-1-\dd(G)}2,\qquad S=n-1-a,\qquad \E(G)=2T.
\end{equation}
Throughout this section, the strict inequalities below ultimately arise from the standing counterexample assumption $T<S$.

If the inertia is $(p,q)$, then
\eqref{eq:KP} and \eqref{eq:inertia-alpha} give
\begin{equation}\label{eq:T-pq}
 T\ge n-\alpha(G)\ge\max\{p,q\}.
\end{equation}

Assume first that $p\ge3$ and put $k=p-1\ge2$.  Since
$\dd(G)\le n-1$, we have $a\ge0$.  If $a=0$, then $\dd(G)=n-1$, so
$G=K_n$, contradicting \eqref{eq:counter}.  Hence $a>0$.
Moreover, \eqref{eq:counter} and \eqref{eq:aST} give
$T<S=n-1-a$, whereas \eqref{eq:T-pq} gives $T\ge\max\{p,q\}$.
Since $n=p+q$ and $k=p-1$, we obtain
\[
 q<n-1-a=p+q-1-a,\qquad
 p<n-1-a=p+q-1-a.
\]
Thus $a<k$ and $a<q-1$, and hence
\begin{equation}\label{eq:a-domain}
 0<a<\min\{k,q-1\}.
\end{equation}

By the two ranges from \cite{AkbariDabirianGhasemi2022}, a counterexample
must satisfy
\[
 \rho>7.11,\qquad a<\log n+1.
\]
Since $a<\log n+1$, we have $n>e^{a-1}$.  Moreover,
$\rho\ge\dd(G)=n-1-2a$.  Hence
\begin{equation}\label{eq:h}
 \rho>h(a):=\max\{7.11,e^{a-1}-1-2a\}.
\end{equation}

Write the positive eigenvalues as $\rho,x_1,\ldots,x_k$ and the absolute
values of the negative eigenvalues as $y_1,\ldots,y_q$.  Set
\[
 R=S-\rho.
\]
Because $\sum_i x_i=T-\rho<R$ and $\rho\ge n-1-2a$, we have
\begin{equation}\label{eq:R}
 0<R\le a,\qquad
 \prod_{i=1}^k x_i<\left(\frac Rk\right)^k.
\end{equation}

Define
\[
 U=\sum_{j=1}^q(y_j-1)=T-q,\qquad
 V=\sum_{j=1}^q(y_j-1)^2.
\]
Then $U<k-a$.  Furthermore, \eqref{eq:LTZ} gives
\begin{equation}\label{eq:V-first}
 V=s^-(G)-2T+q>2a-k.
\end{equation}
Since
\[
 \sum_{i=1}^k x_i=T-\rho<S-\rho=R,
\]
and $x_i>0$, we have
\[
 \sum_{i=1}^k x_i^2
 \le
 \left(\sum_{i=1}^k x_i\right)^2
 <R^2.
\]
Hence
\[
 s^+(G)=\rho^2+\sum_{i=1}^k x_i^2
 <\rho^2+R^2.
\]

Using \eqref{eq:squares} and
$T<S$, we obtain
\begin{align}
 V&=n\dd(G)-s^+(G)-2T+q\notag\\
  &>n\dd(G)-(\rho^2+R^2)-2S+q.\notag
\end{align}
Using $\rho+R=S$, $\dd(G)=n-1-2a$, $S=n-1-a$, and
$n-1=k+q$, we obtain
\begin{equation}
 V> n\dd(G)-(\rho^2+R^2)-2S+q
 =2\rho R-a^2-k. \label{eq:V-second}
\end{equation}

We shall use the following elementary analytic lemma.
\begin{lemma}[Logarithmic variance]\label{lem:logvariance}
If $z_j>0$, $U=\sum_j(z_j-1)$, and
$V=\sum_j(z_j-1)^2$, then
\begin{equation}\label{eq:logvariance}
 \sum_j\log z_j\le U-g(V),
 \qquad
 g(V)=\sqrt V-\log(1+\sqrt V).
\end{equation}
\end{lemma}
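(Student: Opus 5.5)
The plan is to substitute $t_j=z_j-1>-1$, so that $U=\sum_j t_j$ and $V=\sum_j t_j^2$, and to rewrite \eqref{eq:logvariance} as
\[
 \sum_j \psi(t_j)\ \ge\ \varphi\bigl(\sqrt V\bigr),\qquad
 \psi(t)=t-\log(1+t),\quad \varphi(s)=s-\log(1+s)\ (s\ge0).
\]
Thus $g(V)=\varphi(\sqrt V)$. The argument then has two steps: first replace each $\psi(t_j)$ by $\varphi(|t_j|)$, and then aggregate the terms $\varphi(|t_j|)$ into a single $\varphi(\sqrt V)$. If $V=0$ there is nothing to prove, so I will assume $V>0$.

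\emph{Step 1: $\psi(t)\ge\varphi(|t|)$ for all $t>-1$.} For $t\ge0$ the two sides coincide. For $t=-u$ with $0<u<1$, the difference is
\[
 \psi(-u)-\varphi(u)=-2u+\log\frac{1+u}{1-u}=2\bigl(\operatorname{artanh}u-u\bigr)\ge0,
\]
because the power series of $\operatorname{artanh}u$ has nonnegative coefficients and begins with $u$. Writing $s_j=|t_j|$, this gives $\sum_j\psi(t_j)\ge\sum_j\varphi(s_j)$ with $\sum_j s_j^2=V$.

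\emph{Step 2: $\sum_j\varphi(s_j)\ge\varphi(\sqrt V)$.} This is the step I expected to be the main obstacle. The natural route would be convexity of $w\mapsto\varphi(\sqrt w)$, which would give superadditivity. That route fails: the derivative of this map is $1/\bigl(2(1+\sqrt w)\bigr)$, which is decreasing, so the map is concave.

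Instead I will show that $\varphi(s)/s^2$ is nonincreasing on $(0,\infty)$. Its derivative has the sign of
\[
 k(s)=s\varphi'(s)-2\varphi(s)=\frac{s^2}{1+s}-2s+2\log(1+s).
\]
Here $k(0)=0$, and a direct computation gives $k'(s)=-s^2/(1+s)^2\le0$, so $k\le0$ on $[0,\infty)$. Now put $S=\sqrt V$. Each $s_j$ satisfies $s_j\le S$, so $\varphi(s_j)/s_j^2\ge\varphi(S)/S^2$ whenever $s_j>0$, and terms with $s_j=0$ contribute $0$ on both sides. Therefore
\[
 \sum_j\varphi(s_j)=\sum_j s_j^2\,\frac{\varphi(s_j)}{s_j^2}\ \ge\ \frac{\varphi(S)}{S^2}\sum_j s_j^2=\varphi(S)=g(V).
\]
Combining this with Step 1 gives $\sum_j\bigl(t_j-\log(1+t_j)\bigr)\ge g(V)$. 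This is exactly $\sum_j\log z_j\le U-g(V)$.
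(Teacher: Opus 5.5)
Your proof is correct and is essentially the paper's argument: the decisive step is the same monotonicity of $\varphi(s)/s^2$ (your $k$ is the paper's $N$, with the same derivative $-s^2/(1+s)^2$) combined with $|t_j|\le\sqrt V$. The only variation is how negative deviations are handled: you use the reflection inequality $\psi(-u)\ge\varphi(u)$, whereas the paper uses $\psi(u)\ge u^2/2$ on $(-1,0]$ together with $g(V)\le V/2$.

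Your aside about the abandoned route has the direction reversed. What you need is subadditivity of $w\mapsto\varphi(\sqrt w)$, and concavity together with $\varphi(0)=0$ gives exactly that. So the concavity route also works, and your Step~2 is in effect a re-derivation of it.
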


\begin{proof}
Put
\[
 f(u)=u-\log(1+u),\qquad u>-1.
\]
Then
\[
 \sum_j f(z_j-1)
 =
 U-\sum_j\log z_j.
\]
Thus it is enough to prove
\[
 \sum_j f(z_j-1)\ge g(V).
\]

Assume first that $V>0$.  Since
\[
 V=\sum_j(z_j-1)^2,
\]
we have $|z_j-1|\le\sqrt V$ for every $j$.

We first record three elementary estimates.  For $u>0$, the function
$f(u)/u^2$ is decreasing.  Indeed, if
\[
 N(u):=\frac{u^2}{1+u}-2u+2\log(1+u),
\]
then
\[
 \frac{d}{du}\left(\frac{f(u)}{u^2}\right)
 =\frac{N(u)}{u^3},
 \qquad
 N'(u)=-\frac{u^2}{(1+u)^2}<0,
\]
and $N(0)=0$.  Next, for $-1<u\le0$,
\[
 f(u)\ge\frac{u^2}{2},
\]
because the derivative of
$f(u)-u^2/2$ is $-u^2/(1+u)\le0$ and the difference vanishes at $u=0$.
Finally,
\[
 g(V)\le\frac V2.
\]
To see this, put $s=\sqrt V$ and note that
\[
 \frac{s^2}{2}-s+\log(1+s)
\]
has derivative $s^2/(1+s)\ge0$ and vanishes at $s=0$.

We now show that, for every $j$,
\[
 f(z_j-1)\ge \frac{g(V)}{V}(z_j-1)^2.
\]
If $z_j-1>0$, then
\[
 \frac{f(z_j-1)}{(z_j-1)^2}
 \ge
 \frac{f(\sqrt V)}{V}
 =
 \frac{g(V)}{V}.
\]
If $-1<z_j-1\le0$, then
\[
 f(z_j-1)
 \ge
 \frac{(z_j-1)^2}{2}
 \ge
 \frac{g(V)}{V}(z_j-1)^2.
\]
Summing over $j$ gives
\[
 \sum_j f(z_j-1)\ge g(V),
\]
which is equivalent to \eqref{eq:logvariance}.

If $V=0$, then $z_j=1$ for every $j$, and
\eqref{eq:logvariance} is immediate.
\end{proof}

We now return to the determinant.
Since $G$ is nonsingular,
\[
 |\det A|
 =\rho\prod_{i=1}^k x_i\prod_{j=1}^q y_j.
\]
By \eqref{eq:R},
\[
 \sum_{i=1}^k\log x_i
 <k\log(R/k).
\]
Moreover, Lemma~\ref{lem:logvariance}, applied with $z_j=y_j$, gives
\[
 \sum_{j=1}^q\log y_j\le U-g(V).
\]
Since
\[
 U=T-q<S-q=k-a,
\]
we obtain
\begin{equation}\label{eq:det-first}
 \log|\det A|
 <\log\rho+k\log(R/k)+k-a-g(V).
\end{equation}

Put
\begin{equation}\label{eq:wZtau}
 \begin{aligned}
 w&=\max\{0,2a-k,2R\rho-a^2-k\},\\
 Z&=w+a^2+k,
 &\tau&=\min\left\{a,\frac{Z}{2h(a)}\right\}.
 \end{aligned}
\end{equation}
Then $2R\rho\le Z$, and \eqref{eq:R}, \eqref{eq:h} imply $R\le\tau$.
Consequently
\begin{equation}\label{eq:rhork}
  \rho R^k\le\frac Z2\tau^{k-1}.
\end{equation}

By \eqref{eq:V-first} and \eqref{eq:V-second},
\[
 V>2a-k,\qquad V>2R\rho-a^2-k.
\]
Moreover, $V>0$: otherwise $y_j=1$ for every $j$, so
$s^-(G)=q\le n-3$, contradicting \eqref{eq:LTZ}.  Hence
\[
 V>w.
\]
Since $g$ is increasing,
\[
 -g(V)<-g(w).
\]

Combining \eqref{eq:det-first}, \eqref{eq:rhork}, and $V>w$, and using
the monotonicity of $g$, we obtain
\begin{equation}\label{eq:det-F}
 \log|\det A|<F(k,a,w).
\end{equation}
Here
\begin{equation}\label{eq:F}
 F(k,a,w)=
 \log\frac Z2+(k-1)\log\tau-k\log k+k-a-g(w).
\end{equation}
The remaining domain is simply
\begin{equation}\label{eq:F-domain}
 k\in\mathbb Z,\qquad k\ge2,\qquad 0<a<k,\qquad
 w\ge\max\{0,2a-k\}.
\end{equation}

\section{The scalar inequality}\label{sec:scalar}

Write
\[
 s=\sqrt w,\qquad
 H:=\frac{711}{100}=7.11,\qquad
 Z=s^2+a^2+k.
\]
Thus
\[
 h(a)=\max\{H,e^{a-1}-1-2a\}.
\]
Let $a_0$ be the unique solution, on the increasing branch of
$e^{a-1}-1-2a$, of
\[
 e^{a-1}-1-2a=H.
\]
Since the derivative of $e^{a-1}-1-2a$ is $e^{a-1}-2$, this function is
strictly increasing for $a>1+\log2$.  The exponential series, with the
standard geometric estimate for its tail,
\[
 0<e^x-\sum_{j=0}^N\frac{x^j}{j!}
 <
 \frac{x^{N+1}}{(N+1)!}
 \frac{1}{1-x/(N+2)}
 \qquad (N+2>x),
\]
gives, by taking $N=3$ for the upper bound and the ninth partial sum for
the lower bound,
\[
 e^{27/10}<\frac{1551}{100},
 \qquad
 e^{11/4}>\frac{1561}{100}.
\]
Consequently,
\[
 e^{27/10}-1-\frac{74}{10}<H
 <e^{11/4}-1-\frac{15}{2},
\]
and hence
\begin{equation}\label{eq:a0}
 \frac{37}{10}<a_0<\frac{15}{4}.
\end{equation}
Thus
\[
 h(a)=H\quad(0<a\le a_0),
 \qquad
 h(a)=e^{a-1}-1-2a\quad(a\ge a_0).
\]

We shall also use
\begin{equation}\label{eq:log-lower}
 \log x>\frac{2(x-1)}{x+1}\qquad(x>1).
\end{equation}
Indeed, for
\[
 \psi(x)=\log x-\frac{2(x-1)}{x+1},
\]
we have
\[
 \psi'(x)=\frac{(x-1)^2}{x(x+1)^2}>0
 \qquad(x>1),
\]
while $\psi(1)=0$.  In particular,
\[
 \log2>\frac23,
 \qquad
 \log H>
 \frac{2(H-1)}{H+1}
 =
 \frac{1222}{811}
 >
 \frac32.
\]
For the complementary upper bounds, the exponential series gives
\[
 e^{7/10}>
 1+\frac7{10}+\frac{49}{200}+\frac{343}{6000}>2
\]
and
\[
 e^2>
 1+2+2+\frac43+\frac23+\frac4{15}
 =\frac{109}{15}>H.
\]
Thus
\[
 \frac23<\log2<\frac7{10},
 \qquad
 \frac32<\log H<2.
\]
Since proving the desired inequality on a larger domain is sufficient, for
each fixed $k\ge2$ we enlarge \eqref{eq:F-domain} and allow
\[
 0<a<k,\qquad s\ge0.
\]
For fixed $k$, the function $F$ is piecewise smooth in $(a,s)$.  The next
lemma shows that possible interior maxima can be moved to the boundary of
the two branches, reducing the proof to two boundary families.

\begin{lemma}[Reduction to boundary cases]\label{lem:s-reduction}
Fix an integer $k\ge2$.  To prove
\[
 F(k,a,s^2)<0
 \qquad(0<a<k,\ s\ge0),
\]
it is enough to consider the cases
\[
 s=0
 \qquad\text{and}\qquad
 Z=2ah(a).
\]
\end{lemma}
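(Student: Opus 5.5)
The plan is to fix $k$ and $a$ and first study $s\mapsto F(k,a,s^2)$ on each of the two branches determined by which term attains the minimum in $\tau$. Since $g(s^2)=s-\log(1+s)$ and $\frac{d}{ds}g(s^2)=\frac{s}{1+s}$, the $s$-derivative is explicit on each branch.

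On the branch $Z\ge 2ah(a)$ we have $\tau=a$, so $F=\log\frac Z2+(k-1)\log a-k\log k+k-a-g(s^2)$. This gives
\[
 \frac{\partial F}{\partial s}=\frac{s\,\bigl(2(1+s)-Z\bigr)}{Z(1+s)}.
\]
On the branch $Z\le 2ah(a)$ we have $\tau=Z/(2h(a))$, so $F=k\log Z-(k-1)\log(2h(a))-\log 2-k\log k+k-a-g(s^2)$. This gives
\[
 \frac{\partial F}{\partial s}=\frac{s\,\bigl(2k(1+s)-Z\bigr)}{Z(1+s)}.
\]
In both cases the sign of $\partial F/\partial s$ for $s>0$ is the sign of a concave quadratic in $s$, namely $2c(1+s)-s^2-a^2-k$ with $c\in\{1,k\}$. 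On each branch, $F$ is therefore increasing in $s$ up to the larger root and decreasing afterwards.

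Since $Z$ increases with $s$, the second branch is an initial interval $0\le s\le s_b$, where $Z=2ah(a)$ at $s=s_b$, and the first branch is $s\ge s_b$. The function $F$ is continuous across $s_b$, because both formulas agree when $Z=2ah(a)$. Consequently, on $[0,s_b]$ the maximum is attained either at $s=0$, at $s=s_b$ (the case $Z=2ah(a)$), or at an interior critical point $s_2^*=k+\sqrt{k^2+k-a^2}$ lying below $s_b$. On $[s_b,\infty)$ the maximum is attained at $s_b$ or at the critical point $s_1^*=1+\sqrt{3-a^2-k}$. This second point exists only when $k=2$ and $a<1$, and it is relevant only if $s_1^*>s_b$. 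Since $F\to-\infty$ as $s\to\infty$ (the term $-s$ dominates the logarithms), no maximum escapes to infinity. Note also that $s=0$ is a critical point but a minimum-type endpoint only in the sense that the quadratic is positive there, so the endpoint $s=0$ is kept as one of the two boundary families.

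The remaining and main obstacle is to eliminate the interior critical points. I would handle them by moving $a$ rather than $s$. At an interior critical point, substitute the critical relation $Z=2c(1+s)$ into $F$ to obtain a function $\Phi(a)$ along the curve of $s$-critical points. Then differentiate in $a$, using the envelope principle: $\frac{d\Phi}{da}=\partial_aF$ because $\partial_sF=0$. On the second branch,
\[
 \partial_aF=\frac{2ka}{Z}-(k-1)\frac{h'(a)}{h(a)}-1,
\]
which equals $\frac{a}{1+s}-1$ for $a\le a_0$ (where $h'=0$). I expect this to be negative, since $a<k\le s$ at $s_2^*$. If so, then moving along the critical curve in the direction that decreases $a$ increases $F$ and pushes the point either to $s_b$ (the case $Z=2ah$) or to the edge where the critical root ceases to be admissible, and there the maximum reverts to an endpoint.

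For $a\ge a_0$, the extra term $-(k-1)h'/h$ has the same sign, since $h$ is increasing there, so the same conclusion should follow. For the first-branch critical point with $k=2$, $a<1$, we have $h(a)=H$. The constraint $s_1^*>s_b$ forces $a<Z/(2H)\le 6/(2H)<1/2$. Here I would either run the analogous $a$-monotonicity argument, using $\partial_aF=\frac{2a}{Z}+\frac1a-1>0$, to push the point to $Z=2aH$, or, failing that, bound $F$ directly on this tiny explicit region using $\log H>3/2$. This last small case is where I anticipate the most bookkeeping, and it is the step most likely to need a direct estimate rather than a structural argument.
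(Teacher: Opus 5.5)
Your proposal is correct and follows the paper's proof essentially step for step. On each branch you isolate the upper $s$-critical root; the lower root, when positive, is only a local minimum, so your phrase ``increasing up to the larger root'' should be read that way, though the candidate list is unaffected. You then use $\frac{d}{da}F=\partial_aF$ along the critical curve, with $\partial_aF_B=\frac{a}{1+s}-1-(k-1)\frac{h'}{h}<0$ and, for $k=2$, $\partial_aF_A=\frac{a}{1+s}+\frac1a-1>0$, to slide to $Z=2ah(a)$. For branch A, your first option (the $a$-monotonicity argument) is exactly what the paper does, so no direct estimate is needed. For branch B, the ``critical root ceases to be admissible'' alternative never occurs: $ah(a)\to0$ as $a\to0^+$ while $k(1+s)>k$, and the paper even shows $ah(a)-k(1+s)$ is increasing along the curve, so decreasing $a$ always exits through the branch boundary.
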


\begin{proof}
On the branch $Z\ge2ah(a)$ we have $\tau=a$; denote the corresponding
function by $F_A$.  On the branch $Z\le2ah(a)$ we have
$\tau=Z/(2h(a))$; denote it by $F_B$.  Since
\[
 g(s^2)=s-\log(1+s),
 \qquad
 \frac{\partial Z}{\partial s}=2s,
\]
we obtain
\begin{align}
 \frac{\partial F_A}{\partial s}
 &=\frac{2s}{Z}-1+\frac1{1+s}\notag\\
 &=\frac{s\{3-a^2-k-(s-1)^2\}}{Z(1+s)},
 \label{eq:FA-s}\\
 \frac{\partial F_B}{\partial s}
 &=\frac{2ks}{Z}-1+\frac1{1+s}\notag\\
 &=\frac{s\{k^2+k-a^2-(s-k)^2\}}{Z(1+s)}.
 \label{eq:FB-s}
\end{align}

For fixed $a$, the only remaining candidates for an interior maximum in
the $s$-variable are the upper critical points described below.  We show
that the values along each such critical curve are bounded above by a
value attained when the curve meets the branch boundary.

Apart from $s=0$ and the branch boundary, an interior critical maximum of
$F_A$ can therefore occur only at
\[
 s=1+\sqrt{3-a^2-k}.
\]
This is possible only for $k=2$, in which case
\[
 s=1+\sqrt{1-a^2},
 \qquad 0<a\le1.
\]
Along this critical curve, $\frac{\partial F_A}{\partial s}=0$, and hence, by the chain rule,
\[
 \frac{d}{da}F_A(a,s(a))=\frac{\partial F_A}{\partial a}.
\]
The critical-point equation gives $Z=2(1+s)$, so
\[
 \frac{d}{da}F_A(a,s(a))
 =
 \frac1a-1+\frac{a}{1+s}>0.
\]
Moreover,
\[
 s'(a)=-\frac{a}{\sqrt{1-a^2}}<0.
\]
Here $a\le1<a_0$, so $h(a)=H$.  Along the critical curve define
\[
 D(a):=Z-2aH=2(1+s)-2aH.
\]
Then
\[
 D'(a)
 =
 -\frac{2a}{\sqrt{1-a^2}}-2H<0,
\]
while
\[
 \lim_{a\to0^+}D(a)=6>0,
 \qquad
 D(1)=4-2H<0.
\]
Thus the feasible part of this critical curve in branch $A$ ends at a
unique branch-boundary point.  Since $F_A$ increases along the critical
curve, its largest feasible value is attained there.  The lower critical
root, when positive, is a local minimum.

For branch $B$, the only possible interior critical maximum is
\[
 s=k+\sqrt{k^2+k-a^2}.
\]
At such a point, \eqref{eq:FB-s} gives
\[
 Z=2k(1+s).
\]
Along this critical curve,
\[
 \frac{d}{da}F_B(a,s(a))
 =
 \frac{a}{1+s}-1-(k-1)\frac{h'(a)}{h(a)},
\]
where the last term is absent on the constant branch.  This derivative is
strictly negative, since $s>k>a$.  The branch-$B$ condition is
\[
 k(1+s)\le ah(a).
\]
If
\[
 D(a):=ah(a)-k(1+s),
\]
then
\[
 D'(a)
 =
 h(a)+ah'(a)+\frac{ka}{\sqrt{k^2+k-a^2}}>0
\]
on either branch of $h$.  Thus the feasible portion of this critical curve in branch $B$ begins
at a branch-boundary point; since $F_B$ decreases along the curve, its
largest feasible value is attained there.  Finally,
$F_A(k,a,s^2)\to-\infty$ as $s\to\infty$.  This proves the reduction.
\end{proof}

\begin{lemma}\label{lem:s-zero}
For every integer $k\ge2$ and every $0<a<k$,
\[
 F(k,a,0)<0.
\]
\end{lemma}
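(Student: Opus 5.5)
The plan is to exploit the fact that for $w=0$ we have $s=0$, $g(0)=0$ and $Z=a^2+k$. The claim then reduces to
\[
 \Phi(k,a):=\log\frac{a^2+k}{2}+(k-1)\log\tau-k\log k+k-a<0,
 \qquad
 \tau=\min\Bigl\{a,\frac{a^2+k}{2h(a)}\Bigr\}.
\]
First, a check shows that neither bound on $\tau$ suffices by itself. With $\tau\le a$ alone, the value as $a\to k^-$ tends to $\log\frac{k+1}{2}>0$. With $\tau\le Z/(2h)$ alone, the factor $(k-1)\log\frac{k^2+k}{2H}$ is far too large. So I would interpolate: since $\tau$ is a minimum, $\tau^{k-1}\le a^{k-2}\cdot\frac{a^2+k}{2h(a)}$ for $k\ge2$. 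This gives
\[
 \Phi(k,a)\le \Psi(k,a):=2\log\frac{a^2+k}{2}-\log h(a)+(k-2)\log a-k\log k+k-a .
\]
It then suffices to show $\Psi<0$ on $0<a<k$, possibly keeping the sharper bound $\tau\le a$ when $a$ is tiny.

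Next I would split according to the two branches of $h$, using $\frac{37}{10}<a_0<\frac{15}{4}$ from \eqref{eq:a0}.

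\emph{Case $0<a\le a_0$, where $h=H$.} For fixed $k$, the derivative $\partial_a\Psi=\frac{4a}{a^2+k}+\frac{k-2}{a}-1$ has at most a couple of sign changes. So the maximum over $a\in(0,\min\{a_0,k\})$ sits at a critical point or an endpoint. For $k$ large (say $k\ge 10$), the dominant terms $(k-2)\log a_0+k-k\log k$ behave like $-k\log\frac{k}{e a_0}$. This overwhelms $2\log\frac{a_0^2+k}{2}$, so the inequality holds with room to spare. For the finitely many small $k$ ($k=2,\dots,9$), I would evaluate the one-variable bound directly, using $\log2>\frac23$, $\log H>\frac32$, and the endpoint values. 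When $a<k\le a_0$, I would check the endpoint $a\to k$, which gives $2\log\frac{k+1}{2}-\log H+\ldots$, negative for these $k$ thanks to $\log H>\frac32$. For $k=2$ a direct computation with $\tau\le a$ near $a=0$ handles the small-$a$ end.

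\emph{Case $a_0\le a<k$, where $h(a)=e^{a-1}-1-2a\ge H$.} Here I would use $\log h(a)\ge a-1-c$ for an explicit constant $c$. This follows from $h(a)\ge(1-\varepsilon)e^{a-1}$ for $a\ge a_0$, since $1+2a\le \varepsilon e^{a-1}$ there, and it is checkable at $a_0$ and then by monotonicity. Then
\[
 \Psi\le 2\log\frac{a^2+k}{2}+(k-2)\log a-k\log k+k-2a+1+c .
\]
Since $a<k$, the quantity $(k-2)\log a-k\log k$ is at most $-2\log k$. The term $2\log\frac{a^2+k}{2}\le 2\log\frac{k^2+k}{2}$ then contributes about $2\log\frac{k+1}{2}$, while $k-2a+1+c$ is negative unless $a$ is near $k/2$. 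I would therefore maximize the concave-in-$\log a$ expression explicitly over $a$: the maximum of $(k-2)\log a-2a$ is at $a=(k-2)/2$. Doing so reduces everything to a single inequality in $k$, namely a bound of the form $\log\frac{k^2+k}{2}\cdot 2+(k-2)\log\frac{k-2}{2}-k\log k+3+c<0$. This holds for all $k$ beyond a small threshold, and the remaining $k$ are checked by hand as before.

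The main obstacle I expect is this second case for moderate $k$ (roughly $4\le k\le 8$) with $a$ just above $a_0$. There the exponential gain from $h$ has not yet kicked in, and the $2\log\frac{a^2+k}{2}$ term is largest relative to the savings. If the single-interpolation bound $\Psi$ is too lossy there, I would fall back on the exact $\tau$. Since $2ah(a)\ge 2a_0H>a^2+k$ for these $a,k$, we have $\tau=Z/(2h)$ in that range, so I would use the full $(k-1)\log\frac{a^2+k}{2h(a)}$, which gains $(k-2)\log\frac{2h(a)a}{a^2+k}>0$ over $\Psi$. I would then verify the resulting finite family of one-variable inequalities via the rational bounds on $e^{27/10}$, $e^{11/4}$, $\log2$ and $\log H$ already recorded.
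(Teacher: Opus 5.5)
Your plan works, but it is organized differently from the paper. The paper splits at the branch boundary $Z=2ah(a)$. On the branch $\tau=a$ it writes $r=k/a$ and uses the branch condition $r\ge 2h(a)-a>10$ to get $F<2-a-\log r<0$. On the branch $\tau=Z/(2h)$ it uses monotonicity in $a$ and $k$, and defers the branch-boundary points to Lemmas~\ref{lem:constant-boundary} and~\ref{lem:exponential-boundary}.

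Your interpolation $\tau^{k-1}\le a^{k-2}Z/(2h(a))$ is valid, and $\Psi<0$ is true. It removes the case split and any dependence on those lemmas, at the cost of sharpness:
\begin{itemize}
\item At $k=4$, $a=a_0$ you get $\Psi\approx-0.21$, while $F(4,a_0,0)\approx-2.4$.
\item Your large-$k$ bound in the exponential case is only about $-0.13$ at $k=10$, even with the best constant $c=\log\frac{H+1+2a_0}{H}\approx0.79$.
\end{itemize}
So the numerical checks you defer are elementary but tight.

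Your Case~1 heuristic $-k\log\frac{k}{ea_0}$ is actually positive at $k=10$, because $ea_0\approx10.2$, so it does not justify the cutoff $k\ge10$. Case~1 is better organized as follows. For $k\ge4$, $\Psi$ increases in $a$ on $(0,a_0]$. Moreover $\partial_k\Psi(k,a_0)=\frac{2}{a_0^2+k}+\log\frac{a_0}{k}<0$ for $k\ge5$. Hence only $k=2,3,4,5$ need numbers.

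Your opening claim that $\tau\le Z/(2h)$ alone cannot suffice is mistaken. It comes from replacing $h(a)$ by $H$ at $a\approx k$, where $h$ is in fact exponentially large. Since $(k-1)\log\tau$ is increasing in $\tau$, on the whole domain
\[
 F(k,a,0)\le F_B(k,a,0)=k\log\frac{a^2+k}{2k}-(k-1)\log h(a)+k-a .
\]
This bound is negative everywhere:
\begin{itemize}
\item On the constant branch it first decreases and then increases in $a$. Its endpoint values are negative: at $a\to0^+$ it equals $k(1-\log2-\log H)+\log H$; it is negative at $a=k$ for $k=2,3$; and it is negative at $a=a_0$ for $k\ge4$, by the paper's computation.
\item On the exponential branch it decreases in $a$, because $h'/h>1$.
\end{itemize}
This is exactly your Case~2 fallback. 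Used globally, it makes your interpolation unnecessary, and for this lemma it also makes the paper's separate treatment of the branch $\tau=a$ unnecessary.
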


\begin{proof}
We first note that
\[
 h(a)>a,\qquad 2h(a)-a>10
 \qquad(a>0).
\]
Indeed, these inequalities follow from $h(a)=H$ and
$a\le a_0<15/4$ when $a\le a_0$.  On the exponential branch,
\[
 (h(a)-a)'=e^{a-1}-3>0,
 \qquad
 (2h(a)-a)'=2e^{a-1}-5>0,
\]
and the inequalities already hold at $a=a_0$.

Suppose first that $s=0$ lies on branch $A$.  Put $r=k/a$.  The branch
condition gives
\[
 r\ge2h(a)-a>10.
\]
Also $h(a)>a$, so $r>a$.  A direct simplification gives
\[
 F_A(k,a,0)
 =
 \log\frac{a+r}{2}+a(r-1-r\log r).
\]
Since $r>a$ and, by \eqref{eq:log-lower},
\[
 \log r>\log10=\log2+\log5>\frac23+\frac43=2,
\]
we obtain
\[
 \begin{aligned}
 F_A(k,a,0)
 &<\log r+k(1-\log r)-a\\
 &\le 2-a-\log r<0.
 \end{aligned}
\]

Now suppose that $s=0$ lies on branch $B$ and first assume $a\le a_0$, so
$h(a)=H$.  Then
\[
 F_B(k,a,0)
 =
 k\log\frac{a^2+k}{2k}-(k-1)\log H+k-a.
\]
For fixed $k$,
\[
 \frac{\partial F_B}{\partial a}
 =
 \frac{2ka}{a^2+k}-1.
\]
Its only critical point in $(0,k)$ is a local minimum.  Hence the maximum
on each feasible interval occurs at an endpoint.  The branch-boundary
endpoints will be handled in Lemma~\ref{lem:constant-boundary}.  At the
other endpoint we obtain, for $k=2$ and $k=3$,
\[
 2\log(3/2)-\log H<0,
 \qquad
 3\log2-2\log H<0.
\]
For $k\ge4$, the remaining endpoint is $a=a_0$.  Put
\[
 x=1+\frac{a_0^2}{k}.
\]
Treating $k$ temporarily as a continuous variable,
\[
 \frac{\partial F_B(k,a_0,0)}{\partial k}
 =
 \log\frac{x}{2H}+\frac1x.
\]
Since $a_0<15/4$ and $k\ge4$,
\[
 x\le\frac{289}{64}<\frac{2H}{3},
\]
and therefore
\[
 \frac{\partial F_B(k,a_0,0)}{\partial k}
 <\log(1/3)+1<0.
\]
Thus $k=4$ is the worst case.  Using \eqref{eq:a0},
$\log H>3/2$, and $289/128<e$, we get
\[
 F_B(4,a_0,0)
 <
 4-\frac92+\frac3{10}
 =
 -\frac15.
\]

Finally, on the exponential branch,
\[
 h=e^{a-1}-1-2a,
 \qquad
 \frac{h'}h>1,
\]
and hence
\[
 \frac{\partial F_B(k,a,0)}{\partial a}
 =
 \frac{2ka}{a^2+k}-1-(k-1)\frac{h'}h
 <
 k\left(\frac{2a}{a^2+k}-1\right)<0.
\]
Thus its maximum occurs either at $a=a_0$, already covered above, or at a
branch boundary.  This completes the proof.
\end{proof}

\begin{lemma}\label{lem:constant-boundary}
Suppose $0<a\le a_0$, so $h(a)=H$, and suppose
\(
 Z=2Ha.
\)
Then
\[
 F(k,a,s^2)<0.
\]
\end{lemma}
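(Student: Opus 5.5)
On the boundary $Z=2Ha$ we have $\tau=Z/(2H)=a$, so both branches of \eqref{eq:F} agree. Substituting $Z/2=Ha$ and $\tau=a$ gives
\[
 F(k,a,s^2)=\log H+k\bigl(\log\tfrac ak+1-\tfrac ak\bigr)-g(s^2),
 \qquad s^2=2Ha-a^2-k\ge0 .
\]
I therefore plan to view $F$ as a one-variable function of $a$ for each fixed $k$, on the feasible interval
\[
 I_k=\{a\in(0,\min\{k,a_0\}]:\ 2Ha-a^2-k\ge0\}.
\]
I will write $r=a/k\in(0,1)$ and $\phi(r)=\log r+1-r\le0$. The inequality to prove is then
\[
 k\,|\phi(r)|+g(s^2)>\log H ,
\]
with $\log H<2$ from Section~\ref{sec:scalar}.

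\textbf{Step 1: restrict the range of $k$.} Feasibility gives $k\le 2Ha-a^2\le 2Ha_0-a_0^2$. By \eqref{eq:a0} this is below $2H\cdot\tfrac{15}{4}-(\tfrac{37}{10})^2<40$, so only finitely many $k$ occur. For larger $k$ relative to $a$ the term $k|\phi(r)|$ dominates. Since $a<15/4$, for $k\ge 8$ we have $r<15/32$. By \eqref{eq:log-lower}, or directly from $\phi(r)\le -(1-r)^2/2$, we get $k|\phi(r)|\ge 8\,|\phi(15/32)|>2>\log H$. Thus I will only need $2\le k\le 7$, and I will try to push this threshold lower with a cruder bound.

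\textbf{Step 2: the small cases.} For $2\le k\le7$, the quantity $k|\phi(a/k)|$ decreases in $a$ on $(0,k)$, while $s^2=2Ha-a^2-k$ increases in $a$ for $a<H$; since $g$ is increasing, $g(s^2)$ increases in $a$. The two terms thus pull in opposite directions. I will split $I_k$ at a convenient rational point $a^*$:
\begin{itemize}[leftmargin=2em]
 \item[(i)] for $a\le a^*$, use only $k|\phi(a^*/k)|>\log H$;
 \item[(ii)] for $a\ge a^*$, use only $g(s(a^*)^2)>\log H$.
\end{itemize}
Here $g(u^2)=u-\log(1+u)\ge 2$ as soon as $u\ge 3.6$ roughly, i.e.\ once $s^2\gtrsim 13$. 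Because $2H-a-1>8$ for $a\le a_0$, this gives a threshold $a^*$ of about $2$ for every $k$. For $k=2$, part (i) handles $a$ well below $2$, and for $a$ near $k$ part (ii) applies since $s^2\approx a(2H-a-1)$ is large. For $k\ge4$, $a^*\le a_0$ gives $r\le \tfrac{15}{16}$ or smaller, and (i) covers most of the range. The only analytic inputs are the rational bounds on $\log 2$, $\log H$, $e$ and $a_0$ already established in Section~\ref{sec:scalar}. Where one split point is too tight, I will subdivide $I_k$ into two or three subintervals and use the two monotonicities on each piece.

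\textbf{Main obstacle.} The hard part is the tight middle regime for $k=2,3$, where both $k|\phi(r)|$ and $g(s^2)$ are moderate, so neither alone exceeds $\log H<2$. There I will add the two bounds on each short subinterval, using the lower endpoint for $g$ and the upper endpoint for $|\phi|$. If that fails, I will instead verify directly that $F$ is concave in $a$ along the boundary curve. Its $a$-derivative is
\[
 \frac{\partial F}{\partial a}=\frac ka-1-\frac{s}{1+s}\cdot\frac{H-a}{s},
\]
and concavity would reduce the claim to checking the endpoints of $I_k$ (the root of $2Ha=a^2+k$, and $\min\{k,a_0\}$), each with explicit numerical bounds.
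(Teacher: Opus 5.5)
\textbf{There is a genuine gap: as written, the proposal does not prove the lemma.} Your rewriting of $F$ on the boundary is correct; it is the paper's \eqref{eq:constant-boundary-F}, where the paper's $\phi(x)=x-1-\log x$ is your $|\phi|$. Bounding $P:=k\phi(a/k)+g(s^2)$ on a subinterval $[a_1,a_2]$ by $k\phi(a_2/k)+g(s(a_1)^2)$ is also legitimate. The problems are as follows.

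\emph{Step~1 is numerically false.} You have $8\phi(15/32)=8\bigl(\log\tfrac{32}{15}-\tfrac{17}{32}\bigr)\approx1.81<\log H\approx1.96$. This is not looseness in your estimate. For $k=8$ the boundary is feasible for all $a\in[0.59,a_0]$, and for $a$ near $a_0$ the $\phi$-term genuinely lies below $\log H$; there it is $g(s^2)\approx3.7$ that carries the inequality. The cutoff must be raised to $k\ge9$. Then $a/k<5/12$, and $\log\tfrac{12}{5}>\tfrac{14}{17}$ gives $9\phi(5/12)>\tfrac{441}{204}>2$. So $k=8$ joins the finite cases.

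\emph{Your fallback fails on two counts.} First, concavity of $F$ in $a$ would not put the maximum at the endpoints of $I_k$; convexity would. Second, $F$ is not concave anyway, since $F''(a)=-k/a^2+\bigl[(1+s)+(H-a)^2/s\bigr]/(1+s)^2\to+\infty$ as $s\to0^+$. Worse, for $k=2$ the maximum of $F$ is interior: $P\approx2.21$ near $a\approx0.75$, against $P\approx3.43$ and $P\approx2.99$ at the two ends. No endpoint reduction can succeed.

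\emph{Step~2, which is the entire content of the lemma, is only promised.} At $k=2$ a single split fails: at the $g$-threshold $a^*\approx1.06$ one has $2\phi(a^*/2)\approx0.33$. The true margin of $P$ over $\log H$ is only about $0.25$. A finer subdivision does close; for example, nodes $0.3,0.4,\dots,1.0,1.2$ give interval lower bounds all above $2.03$. But this needs certified rational bounds on $\log$ and $\sqrt{\cdot}$ at every node, far more than the constants $\log2$, $\log H$, $e$, $a_0$ you list. It also needs the analogous, easier checks for $k=3,\dots,8$.

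\emph{The paper avoids all per-$k$ casework with an idea your plan lacks: fix $s$ instead of $k$.} Along the boundary $a=H-\sqrt{H^2-k-s^2}$, with $r=k/a$, one gets $\frac{d}{dk}\bigl(k\phi(a/k)\bigr)=\log r-\frac{r-1}{2(H-a)}>0$. This uses $\log r>2(r-1)/(r+1)$ and $r+1<4(H-a)$, and shows $P$ increases in $k$. The case $s\ge4$ is disposed of by $g(16)=4-\log5>2$. For $s<4$, the $k=2$ point with the same $s$ is feasible. Everything therefore reduces to the single curve $k=2$. The paper handles that curve in four $a$-ranges with tangent-line bounds, using convexity of $2\phi(a/2)$ and concavity of $\log$. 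You should either adopt this monotonicity-in-$k$ reduction or actually carry out the subdivision with explicit rational estimates.
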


\begin{proof}
Put
\[
 \phi(x)=x-1-\log x,\qquad 0<x<1.
\]
At the branch boundary,
\[
 s^2=2Ha-a^2-k
\]
and
\begin{equation}\label{eq:constant-boundary-F}
 F
 =
 \log H-k\phi(a/k)-g(s^2).
\end{equation}
Thus it is enough to prove
\[
 P:=k\phi(a/k)+g(s^2)>2,
\]
because $\log H<2$.

We first reduce to $k=2$.  Fix $s$.  Along the branch boundary,
\[
 a=H-\sqrt{H^2-k-s^2}.
\]
Treating $k$ as a continuous variable and writing $r=k/a>1$, we have
\[
 \frac{da}{dk}=\frac{1}{2(H-a)}
\]
and
\[
 \frac{d}{dk}\bigl(k\phi(a/k)\bigr)
 =
 \log r-\frac{r-1}{2(H-a)}.
\]
Moreover,
\[
 r=2H-a-\frac{s^2}{a}\le2H-a.
\]
Since $a\le a_0<15/4$,
\[
 r+1<4(H-a).
\]
By \eqref{eq:log-lower},
\[
 \log r>
 \frac{2(r-1)}{r+1}
 >
 \frac{r-1}{2(H-a)}.
\]
Hence $P$ is strictly increasing with $k$ along the branch boundary.

If $s\ge4$, then
\[
 P\ge g(16)=4-\log5>2.
\]
Suppose therefore that $s<4$.  The branch-boundary point with $k=2$ is
feasible, since
\[
 s^2<16<4H-6
\]
implies that its smaller root satisfies $0<a<2$.  Hence it remains only to
prove
\begin{equation}\label{eq:P2}
 2\phi(a/2)+g(s^2)>2,
 \qquad
 s^2=2Ha-a^2-2\ge0,
 \qquad
 0<a<2.
\end{equation}

If $0<a\le1/4$, then $\phi$ is decreasing on $(0,1)$ and
\[
 2\phi(a/2)
 \ge
 2\phi(1/8)
 =
 -\frac74+6\log2
 >
 \frac94.
\]

If $1/4\le a\le3/8$, then
\[
 2\phi(a/2)
 \ge
 2\phi(3/16)
 =
 -\frac{13}{8}+2\log\frac{16}{3}.
\]
By \eqref{eq:log-lower},
\[
 \log\frac{16}{3}
 =
 2\log2+\log\frac43
 >
 \frac43+\frac27
 =
 \frac{34}{21}.
\]
Also $s^2$ is increasing in $a$, and at $a=1/4$,
\[
 s^2=\frac{597}{400}>
 \left(\frac65\right)^2.
\]
The elementary estimate
\[
 g(s^2)\ge\frac{3s^2}{2(2s+3)}
\]
follows by considering
\[
 q(s)=s-\log(1+s)-\frac{3s^2}{2(2s+3)}.
\]
Indeed,
\[
 q'(s)=\frac{s^3}{(1+s)(2s+3)^2}\ge0,
 \qquad q(0)=0.
\]
Moreover, $3s^2/[2(2s+3)]$ is strictly increasing for $s>0$.
Therefore $s>6/5$ gives $g(s^2)>2/5$.  Hence
\[
 2\phi(a/2)+g(s^2)
 >
 -\frac{13}{8}+\frac{68}{21}+\frac25
 =
 \frac{1691}{840}>2.
\]

Suppose next that
\[
 \frac38\le a\le\frac65.
\]
Since the function $a\mapsto2\phi(a/2)$ is convex, its tangent at
$a=3/4$ gives
\[
 2\phi(a/2)
 \ge
 2\log\frac83-\frac{5a}{3}.
\]
By concavity of $\log$, its tangent at $4$ gives
\[
 g(s^2)=s-\log(1+s)
 \ge
 \frac{3s}{4}+\frac34-\log4.
\]
Furthermore,
\[
 2Ha-a^2-2-
 \left(\frac{20a}{9}+\frac{19}{20}\right)^2
\]
is a concave quadratic in $a$ whose values at $3/8$ and $6/5$ are,
respectively,
\[
 \frac{167}{14400}
 \qquad\text{and}\qquad
 \frac{9787}{18000}.
\]
Thus
\[
 s>\frac{20a}{9}+\frac{19}{20}.
\]
Consequently,
\[
 \begin{aligned}
 2\phi(a/2)+g(s^2)
 &>
 \log\frac{16}{9}+\frac{117}{80}\\
 &>
 \frac{14}{25}+\frac{117}{80}
 =
 \frac{809}{400}>2,
 \end{aligned}
\]
where the second inequality follows from \eqref{eq:log-lower}.

Finally, if $6/5\le a<2$, then $s^2$ is increasing and
\[
 s^2>
 14\cdot\frac65-\left(\frac65\right)^2-2
 =
 \frac{334}{25}
 >
 \left(\frac{18}{5}\right)^2.
\]
Hence $s>18/5$.  By concavity of $\log$ at $4$,
\[
 \log\frac{23}{5}
 <
 \log4+\frac3{20}
 <
 \frac{31}{20},
\]
and therefore
\[
 g(s^2)>
 \frac{18}{5}-\frac{31}{20}
 =
 \frac{41}{20}>2.
\]
This proves \eqref{eq:P2}, and hence \eqref{eq:constant-boundary-F} is
negative.
\end{proof}

\begin{lemma}\label{lem:exponential-boundary}
Suppose $a\ge a_0$, $h(a)=e^{a-1}-1-2a$, and $Z=2ah(a)$. Then
\[
 F(k,a,s^2)<0.
\]
\end{lemma}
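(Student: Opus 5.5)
At the branch boundary $Z=2ah(a)$ we have $\tau=a$. The same simplification that produced \eqref{eq:constant-boundary-F} then gives
\[
 F=\log h(a)-k\phi(a/k)-g(s^2),\qquad s^2=2ah(a)-a^2-k\ge0,\qquad 0<a<k,
\]
with $\phi(x)=x-1-\log x$ as in Lemma~\ref{lem:constant-boundary}. So it suffices to prove
\[
 P:=k\phi(a/k)+g(s^2)>\log h(a).
\]
Two facts on the exponential branch will be used throughout. First, $a\ge a_0>37/10$ by \eqref{eq:a0}. Second, $\log h(a)<a-1$, since $h(a)<e^{a-1}$.

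Both summands of $P$ are nonnegative, and they compete as $k$ varies. Explicitly,
\[
 k\phi(a/k)=a-k+k\log(k/a)
\]
vanishes at $k=a$ and grows with $k$, while $s^2$ decreases as $k$ grows. I therefore split according to the size of $k$ relative to $ah(a)$.

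\emph{Case $k\ge ah(a)$.} Put $r=k/a\ge h(a)\ge H$. By the bound on $\log H$ obtained from \eqref{eq:log-lower}, $\log r\ge\log H>3/2$. Hence
\[
 k\phi(a/k)=k(\log r-1)+a>\tfrac12k\ge\tfrac12ah(a)\ge\tfrac{37}{20}h(a).
\]
Since $g\ge0$, it remains to check $\tfrac{37}{20}h>\log h$. This is trivial for $h\ge H$, because $x-\log x$ is increasing there.

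\emph{Case $a<k\le ah(a)$.} Drop the nonnegative term $k\phi(a/k)$. Here
\[
 s^2\ge ah(a)-a^2=a\bigl(h(a)-a\bigr)=:\sigma(a)^2,
\]
which is positive because $h(a)>a$, as shown in the proof of Lemma~\ref{lem:s-zero}. Since $g$ is increasing, it suffices to prove $\sigma-\log(1+\sigma)>\log h(a)$ for all $a\ge a_0$.
\begin{itemize}
\item At $a=a_0$ we have $h=H$. Using $37/10<a_0<15/4$, one gets $\sigma^2>\tfrac{37}{10}\bigl(H-\tfrac{15}{4}\bigr)>12.4$, so $\sigma>3.5$. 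Then $g>3.5-\log4.5>1.99>\log H$. Making this rigorous requires explicit rational bounds on $\log4.5$ and $\log H$; I would obtain them from \eqref{eq:log-lower} and tangent-line concavity estimates, exactly as in Lemma~\ref{lem:constant-boundary}.
\item For $a>a_0$, I would show that $\sigma-\log(1+\sigma)-\log h$ is increasing. Since $h'/h\le1+\varepsilon$ on this range, it is enough to show $\tfrac{d}{da}\bigl(\sigma-\log(1+\sigma)\bigr)>h'/h$. Alternatively, I would use the crude bounds $\sigma\ge\sqrt{a h/2}$ (valid since $h\ge2a$ once $a\ge a_0$) and $\log h<a-1$, and compare $\sqrt{a e^{a-1}/4}$ with $a-1+\log(1+\sigma)$, an inequality that only gets easier as $a$ grows.
\end{itemize}

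The main obstacle is the endpoint $a=a_0$ in the second case. There the margin is only about $0.03$, so the numerical constants must be handled carefully, in the same style as the rational-interval estimates of Lemma~\ref{lem:constant-boundary}. If the margin proves too thin, I would refine the split. Instead of discarding $k\phi(a/k)$ entirely, I would use $\partial P/\partial k=\log(k/a)-1/(2(1+s))$, which shows that $P$ is increasing in $k$ once $k\ge a e^{1/2}$. This reduces the case to $k\in(a,ae^{1/2}]$. On that interval $s^2\ge2ah-(1+e^{1/2})a$, which gives a comfortable margin, and the two cases then close.
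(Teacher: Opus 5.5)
Your boundary formula $F=\log h-k\phi(a/k)-g(s^2)$ is correct, as is the case $k\ge ah(a)$. The check at $a=a_0$ in the case $a<k\le ah(a)$ is also fine: it amounts to $4.5\cdot 7.11<e^{7/2}$.

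\textbf{The gap.} In the second case you have not justified the passage from $a=a_0$ to all $a>a_0$. After discarding $k\phi(a/k)$ you are left with the tight inequality $g\bigl(a(h-a)\bigr)>\log h$, whose margin at $a_0$ is only about $0.07$. Neither justification you offer establishes it.
\begin{itemize}
\item The monotonicity route is only announced. Its premise ``$h'/h\le1+\varepsilon$'' is not what holds: $h'/h=1+(2a-1)/h$, which is about $1.9$ at $a_0$.
\item The crude route fails near $a_0$. The claim $h\ge2a$ is false there, since $h(a_0)=H=7.11<2a_0\approx7.5$; it holds only from about $a\approx3.78$.
\item Replacing $\log h$ by $a-1$ also loses too much. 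At $a=a_0$ one has $g\bigl(a(h-a)\bigr)\approx2.03$, while $a-1\approx2.75$. In fact $g\bigl(a(h-a)\bigr)>a-1$ fails on roughly $[a_0,3.9]$, so ``an inequality that only gets easier as $a$ grows'' is false at its starting point.
\end{itemize}

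\textbf{Two ways to close it.} First, the monotonicity route can be completed directly. Using $h'=h+2a-1$ one gets $\frac{d}{da}g(\sigma^2)=\frac{(1+a)h+2a^2-3a}{2(1+\sigma)}$. On $[a_0,\infty)$ we have $h>2a-1$, so $h'/h<2$. Also $\sigma<\sqrt{ah}$ and $ah\ge a_0H>(2+2\sqrt2)^2$, so the numerator exceeds $ah>4+4\sqrt{ah}>4+4\sigma$. Hence $\frac{d}{da}g(\sigma^2)>2>h'/h$.

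Second, your last fallback closes the case and is essentially the paper's proof.
\begin{itemize}
\item The paper computes $\partial F/\partial r=a\bigl(-\log r+\tfrac{1}{2(1+\sqrt W)}\bigr)<0$ for $r=k/a\ge5/3$; you use the threshold $e^{1/2}$ instead of $5/3$.
\item It then reduces to $1<r\le5/3$, drops $r-1-r\log r\le0$, and uses $W\ge W_*=a(2h-a-5/3)$.
\item Because $W_*$ keeps almost all of $2ah$, even the lossy bound $\log h<a-1$ suffices. This is checked numerically on $[a_0,4]$ and via $2h>5a+5/3$ for $a\ge4$.
\end{itemize}
There is a slip in your version: for $k\le ae^{1/2}$ the bound is $s^2\ge2ah-a^2-e^{1/2}a$, not $2ah-(1+e^{1/2})a$.

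\textbf{Comparison.} Your split at $k=ah$ makes the large-$k$ range trivial, but it leaves only about half of $2ah$ in $s^2$ for small $k$, which is why the remaining inequality is tight. The paper's cutoff at $r=5/3$ avoids that, so its estimates have ample room.
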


\begin{proof}
Put
\[
 r=\frac{k}{a}>1,
 \qquad
 W=s^2=a(2h-a-r).
\]
At the branch boundary,
\begin{equation}\label{eq:exp-boundary}
 F
 =
 \log h+a(r-1-r\log r)-g(W).
\end{equation}
For fixed $a$,
\[
 \frac{\partial F}{\partial r}
 =
 a\left(-\log r+\frac1{2(1+\sqrt W)}\right).
\]
Hence
\[
 \frac{\partial F}{\partial r}<0
 \qquad
 \text{for }r\ge\frac53,
\]
since $\log(5/3)>1/2$.

Temporarily allowing $r$ (equivalently, $k=ar$) to vary continuously,
if a feasible point has $r>5/3$, then replacing $r$ by $5/3$ preserves
feasibility, because $W=a(2h-a-r)$ increases as $r$ decreases.  Since $F$
is decreasing for $r\ge5/3$, it is therefore enough to consider
$1<r\le5/3$.  Since
\[
 r-1-r\log r\le0
\]
and
\[
 W\ge W_*:=a\left(2h-a-\frac53\right),
\]
we obtain
\[
 F\le\log h-g(W_*).
\]
Moreover,
\[
 \log h<a-1,
\]
because $h<e^{a-1}$.  Thus it remains to prove
\[
 g(W_*)>a-1.
\]

First suppose $a_0\le a\le4$.  Since $h\ge H$ and the function
\[
 a\longmapsto a\left(2H-a-\frac53\right)
\]
is increasing on $[a_0,4]$, \eqref{eq:a0} gives
\[
 W_*
 \ge
 a\left(2H-a-\frac53\right)
 >
 \frac{12284}{375}
 >
 \left(\frac{57}{10}\right)^2.
\]
Hence
\[
 g(W_*)
 >
 \frac{57}{10}-\log\frac{67}{10}
 >
 \frac{37}{10}
 >
 a-1,
\]
where $\log(67/10)<2$ follows from $67/10<e^2$.

Now suppose $a\ge4$.  We claim that
\[
 2h>5a+\frac53.
\]
At $a=4$ this follows from $e^3>20$, and the derivative of the difference
between the two sides is
\[
 2h+4a-7>0.
\]
Therefore
\[
 W_*>4a^2,
\qquad
 \sqrt{W_*}>2a\ge8.
\]
For $s\ge8$ one has $\log(1+s)<s/2$, so
\[
 g(W_*)>\frac{\sqrt{W_*}}2>a>a-1.
\]
This completes the proof.
\end{proof}

\begin{lemma}\label{lem:scalar}
On the domain \eqref{eq:F-domain}, the function in \eqref{eq:F} satisfies
\begin{equation}\label{eq:F-negative}
 F(k,a,w)<0.
\end{equation}
\end{lemma}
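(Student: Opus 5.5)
The plan is to assemble Lemma~\ref{lem:scalar} from the three preceding lemmas. Lemma~\ref{lem:s-reduction} reduces everything to two boundary families; Lemma~\ref{lem:s-zero} handles one family, and Lemmas~\ref{lem:constant-boundary} and~\ref{lem:exponential-boundary} handle the other.

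First, I will enlarge the domain. Fix an integer $k\ge2$ and write $w=s^2$ with $s\ge0$. The domain \eqref{eq:F-domain} imposes $w\ge\max\{0,2a-k\}$, which is a subset of the enlarged domain $\{0<a<k,\ s\ge0\}$. It therefore suffices to prove $F(k,a,s^2)<0$ on the enlarged domain for each $k$.

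Second, I will invoke Lemma~\ref{lem:s-reduction}. The enlarged domain splits into branch $A$ ($Z\ge2ah(a)$, where $\tau=a$) and branch $B$ ($Z\le2ah(a)$, where $\tau=Z/(2h(a))$). The two formulas agree on the common boundary, so $F$ is continuous there. The lemma shows three things. In the $s$-direction, every interior critical maximum lies on a curve along which $F$ is monotone toward the branch boundary $Z=2ah(a)$. The lower critical roots are minima. Finally, $F\to-\infty$ as $s\to\infty$. Consequently, for each fixed $a$ the supremum of $F$ over $s\ge0$ is bounded by its values on the two boundary families: $s=0$, and $Z=2ah(a)$.

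Third, I will treat the two families.
\begin{enumerate}[(i)]
\item The family $s=0$ is exactly Lemma~\ref{lem:s-zero}, which gives $F(k,a,0)<0$ for all $0<a<k$. Its proof defers branch-$B$ endpoints that lie on the branch boundary, and those are covered in (ii).
\item The family $Z=2ah(a)$ splits according to $a\le a_0$ or $a\ge a_0$, by the definition of $h$. For $a\le a_0$, Lemma~\ref{lem:constant-boundary} applies. For $a\ge a_0$, Lemma~\ref{lem:exponential-boundary} applies. Both give strict negativity.
\end{enumerate}
Combining these with the reduction yields $F<0$ throughout the enlarged domain, and hence on \eqref{eq:F-domain}.

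The main point needing care is the reduction step: ensuring that no supremum escapes to an edge not listed. The limits $a\to0^+$ and $a\to k^-$ need a check. At $a\to k^-$, the condition $w\ge2a-k$ only enlarges $s$, which is harmless by the $s\to\infty$ decay. Near $a\to0^+$, $Z\to s^2+k$ stays positive and $\tau=\min\{a,\cdot\}\to0$, so $(k-1)\log\tau\to-\infty$. I would verify these boundary limits explicitly, together with the fact that strict inequality survives because each boundary lemma is strict and the reduction compares interior values to boundary values with $\le$. I would then conclude.
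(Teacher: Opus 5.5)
Your proposal is correct and matches the paper's own proof. Like the paper, you use Lemma~\ref{lem:s-reduction} to reduce to the families $s=0$ and $Z=2ah(a)$, handle $s=0$ with Lemma~\ref{lem:s-zero}, and split the branch boundary at $a_0$ between Lemmas~\ref{lem:constant-boundary} and~\ref{lem:exponential-boundary}. The extra limit checks you plan at $a\to0^+$ and $a\to k^-$ are unnecessary. The reduction already bounds each value $F(k,a,s^2)$ pointwise by the value at some branch-boundary or $s=0$ point (possibly at a different $a$), and that value is strictly negative.
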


\begin{proof}
By Lemma~\ref{lem:s-reduction}, it is enough to consider $s=0$ and the
branch boundary $Z=2ah(a)$.  The case $s=0$ is
Lemma~\ref{lem:s-zero}.  At the branch boundary, either $a\le a_0$ and
Lemma~\ref{lem:constant-boundary} applies, or $a\ge a_0$ and
Lemma~\ref{lem:exponential-boundary} applies.
\end{proof}

\begin{proposition}\label{prop:pge3}
No connected nonsingular counterexample has $p\ge3$.
\end{proposition}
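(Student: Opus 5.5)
The plan is to argue by contradiction, assembling the reduction of Section~\ref{sec:reduction} with Lemma~\ref{lem:scalar}. Suppose $G$ is connected, nonsingular, of order $n\ge5$, satisfies \eqref{eq:counter}, and has $p=n^+(G)\ge3$. Then we are in exactly the setting where $k=p-1\ge2$ and the quantities $a,S,T,R,U,V$ are defined. We have already shown $0<a<k$ in \eqref{eq:a-domain}. The first step is to check that the parameter $w$ from \eqref{eq:wZtau} satisfies $w\ge\max\{0,2a-k\}$, which holds by definition. So $(k,a,w)$ lies in the domain \eqref{eq:F-domain}.

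The second step is to invoke \eqref{eq:det-F}, which gives $\log|\det A|<F(k,a,w)$. This inequality was derived using four ingredients, and I would re-verify that each applies here:
\begin{enumerate}[(i)]
\item the counterexample ranges $\rho>h(a)$ and $a<\log n+1$ from \cite{AkbariDabirianGhasemi2022}, which require $n\ge5$ and nonsingularity;
\item the bound \eqref{eq:LTZ}, which requires connectivity and gives $V>2a-k$ and $V>0$;
\item the bound on $\prod x_i$ via AM--GM with $\sum x_i<R$;
\item Lemma~\ref{lem:logvariance} applied to the $y_j$.
\end{enumerate}
One point to double-check is the use of $n-1=k+q$ in \eqref{eq:V-second}. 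This is valid because nonsingularity gives $n=p+q$.

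The third step is to apply Lemma~\ref{lem:scalar}, which gives $F(k,a,w)<0$ and hence $\log|\det A|<0$, that is, $|\det A|<1$. On the other hand, $A$ is an integer matrix and $G$ is nonsingular, so $\det A$ is a nonzero integer (Lemma~\ref{lem:pseudodet} with $r=n$). Therefore $|\det A|\ge1$, a contradiction.

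The main obstacle is not this assembly step, which is bookkeeping, but making sure the chain of strict inequalities is genuinely valid. In particular, the bound $R\le\tau$ must use both $R\le a$ and $2R\rho\le Z$ together with $\rho>h(a)$, so that $R<Z/(2h(a))$. We also need \eqref{eq:rhork}, $\rho R^k\le \tfrac Z2\tau^{k-1}$, which follows from $\rho R\le Z/2$ and $R^{k-1}\le\tau^{k-1}$. I would verify these explicitly before concluding. The degenerate case $a=0$ is already excluded, since it forces $G=K_n$.
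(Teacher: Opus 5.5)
Your proposal is correct and follows the paper's proof: the paper also just combines \eqref{eq:det-F} with Lemma~\ref{lem:scalar}, noting that $(k,a,w)$ lies in \eqref{eq:F-domain}, to get $\log|\det A|<0$, which contradicts $|\det A|\ge1$ for a nonsingular integer matrix. The extra checks you list ($n-1=k+q$ from nonsingularity, $V>w$ via \eqref{eq:LTZ}, $R\le\tau$, and \eqref{eq:rhork}) are exactly the bookkeeping already carried out in Section~\ref{sec:reduction} before the proposition is stated.
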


\begin{proof}
For such a counterexample, \eqref{eq:det-F} and
Lemma~\ref{lem:scalar} give $\log|\det A|<0$.  This contradicts
$|\det A|\ge1$.
\end{proof}

\section{Two positive eigenvalues}\label{sec:p2}

It remains to handle the case excluded by $k=p-1\ge2$.

\begin{proposition}\label{prop:p2}
Let $G$ be connected and nonsingular of order $n\ge5$.  If $p(G)=2$, then
\[
 \E(G)\ge n-1+\dd(G).
\]
\end{proposition}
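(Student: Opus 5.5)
The approach is to pass to the complement. The paper announces that the case $p=2$ ``is treated through the sparse complement.'' With exactly two positive eigenvalues, $G$ is dense in a structured way: a classical result (Smith-type characterizations of graphs with $\lambda_2>0$ small inertia) says that $\bar G$ is sparse, essentially a disjoint union of few cliques plus restricted attachments. Concretely, I would first record what $p=2$ forces. Since $p\le n-\alpha(G)$ is automatic, the useful direction is that $A$ has $n-2$ negative eigenvalues. Every induced subgraph then has at most two positive eigenvalues, so $G$ contains no induced $2K_2\cup K_1$-type obstructions with three positive eigenvalues (e.g.\ no induced $3K_2$, no $P_4\cup K_2$, and so on).

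\textbf{Step 1 (inertia count).} With $q=n-2$ negative eigenvalues $-y_j$ and positive eigenvalues $\rho\ge x$, we have $\E(G)=2(\rho+x)$ and $\sum y_j=\rho+x$. Nonsingularity gives $\rho x\prod y_j\ge1$. Also $s^-(G)\ge n-1$ by \eqref{eq:LTZ}.

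\textbf{Step 2 (target in terms of $\rho$).} The goal $\E(G)\ge n-1+\dd(G)$ becomes $2\rho+2x\ge n-1+\dd$. Writing $\bar m$ for the number of edges of $\bar G$, we have $\dd(G)=n-1-2\bar m/n$, so the target is
\[
\rho+x\ge n-1-\bar m/n .
\]
Thus I must show that $\E(G)$ loses at most about $2\bar m/n$ compared with $K_n$. I would compare $A(G)=J-I-A(\bar G)$. Weyl-type arguments give $\rho\ge n-1-\lambda_1$-type bounds, but these are too weak. Instead I would use Ky Fan: $\rho+x=\max\tr(P^TAP)$ over $2$-frames. Choosing $P$ spanned by $\mathbf 1/\sqrt n$ and a suitable second vector $u\perp\mathbf 1$ gives
\[
\rho+x\ge (n-1)-\frac{2\bar m}{n}+\bigl(-1-u^TA(\bar G)u\bigr).
\]
So it suffices to find a unit $u\perp\mathbf 1$ with $u^TA(\bar G)u\le -1+\bar m/n$, or more simply with $-u^TA(\bar G)u\ge 1$ when $\bar m$ is small.

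\textbf{Step 3 (choice of $u$).} Take $u$ supported on an edge $xy$ of $\bar G$, namely $u=(e_x-e_y)/\sqrt2$, adjusted to be orthogonal to $\mathbf 1$; it already is. Then $u^TA(\bar G)u=-1$, which gives $\rho+x\ge n-1-2\bar m/n+0$. This falls short by $\bar m/n$. To recover it, I would instead use a vector adapted to the structure of $\bar G$. Since $G$ is nonsingular with $p=2$, $\bar G$ is disconnected from nothing trivial: if $\bar G$ were edgeless then $G=K_n$, which has $p=1$. The natural choice is $u$ equal to the normalized least eigenvector of $A(\bar G)$ projected off $\mathbf 1$, giving a gain of $\lambda_{\min}(\bar G)$ in magnitude. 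I then need $|\lambda_{\min}(\bar G)|-1\ge \bar m/n$ up to the projection error.

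\textbf{Step 4 (main obstacle).} The hardest step is exactly this balance when $\bar G$ is relatively dense, where $\bar m/n$ is large. There I would switch tools. If $\dd(G)\le n-2\log n-3$, Theorem 3.3 of \cite{AkbariDabirianGhasemi2022} applies; if $\lambda_1\le7.11$ then $n$ is small and Theorem 2.1 applies. So only $\bar m/n<\log n+1$ remains, and there I would exploit $p=2$ structurally. I would show, via interlacing on induced subgraphs, that $\bar G$ is a disjoint union of complete multipartite-like pieces. Using $|\lambda_{\min}|\ge\sqrt{\Delta(\bar G)}$-type bounds on stars inside $\bar G$, together with the determinant integrality of Step 1, I would close the remaining gap. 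I expect the sharp bookkeeping of the projection error onto $\mathbf 1^\perp$ to be the delicate point.
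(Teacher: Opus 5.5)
Your Ky Fan framework is the right one, and it is the one the paper uses. With $B=A(\overline G)$, $\mathbf e=\mathbf 1/\sqrt n$ and a unit vector $\mathbf x\perp\mathbf 1$, the target reduces to finding $\mathbf x$ with $\mathbf x^TB\mathbf x\le-1-\overline m/n$. Your Step~2 has the sign wrong ($-1+\overline m/n$). Its ``simpler'' condition $-\mathbf x^TB\mathbf x\ge1$ is also insufficient, as your own Step~3 shows.

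The proposal stops exactly where the proof has to start: Step~4 lists intentions, and nothing in it produces such an $\mathbf x$. The first missing idea is the reduction that makes the complement genuinely sparse. Since $q=n-2$, \eqref{eq:inertia-alpha} gives $\alpha(G)\le2$. Then \eqref{eq:KP} yields $\E(G)\ge2n-4\ge2n-2-2\overline m/n$ whenever $\overline m\ge n$, so one may assume $\overline m<n$. Your substitute, Theorem~3.3 of \cite{AkbariDabirianGhasemi2022}, only leaves $\overline m/n<\log n+1$. That regime still contains complements with superlinearly many edges and with cycles, and you give no mechanism for them.

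The second gap is structural, and here your picture of $\overline G$ as ``a disjoint union of few cliques'' is wrong: $\alpha(G)\le2$ means $\overline G$ is triangle-free. What is actually needed is a chain of exclusions:
\begin{itemize}
\item An induced $\overline{C_\ell}$ with $\ell\ge5$ odd has three positive eigenvalues, so $\overline G$ is bipartite.
\item A component of $\overline G$ containing a cycle gives a test vector (values $1/u$ and $-1/v$ on its two classes) with $A(G)$-Rayleigh quotient at least $1$. This suffices precisely because $\overline m<n$.
\item A $K_2$-component produces two equal rows of $A(G)$, contradicting nonsingularity.
\item An induced $\overline{2P_3}$ has three positive eigenvalues.
\item An induced $\overline{P_5}$ forces $\lambda_3(G)\ge0$.
\end{itemize}
This leaves $\overline G=T\cup tK_1$ with $T$ a star or a double star. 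Only there does one build explicit witnesses that are orthogonal to $\mathbf 1$ by construction. For the double star this is a degree-weighted signed vector. For the star it is a two-parameter family whose mean is absorbed by the isolated vertices, with an indefinite $2\times2$ quadratic form.

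This construction sidesteps the ``projection error'' of your eigenvector choice, which you never estimate. Citing $\sqrt{\Delta(\overline G)}$-type eigenvalue bounds and determinant integrality, with no indication of how they combine, does not replace these steps. As written, the proposal does not prove the statement.
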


\begin{proof}
The case $n=5$ follows from Proposition~\ref{prop:n5}, so assume $n\ge6$.
Let
\[
 B=A(\overline G),
 \qquad
 \overline m=|E(\overline G)|.
\]
Since $q=n-2$, \eqref{eq:inertia-alpha} gives $\alpha(G)\le2$; hence
$\overline G$ is triangle-free.  If $\overline m\ge n$, then
\eqref{eq:KP} gives
\[
 \E(G)\ge2n-4
 \ge2n-2-\frac{2\overline m}{n}
 =n-1+\dd(G).
\]
We may therefore assume
\begin{equation}\label{eq:mbar-small}
 \overline m<n.
\end{equation}

Suppose first that $\overline G$ is nonbipartite.  A shortest odd cycle in
$\overline G$ is induced and has length $\ell\ge5$.  Besides the eigenvalue
$\ell-3$, the graph $\overline{C_\ell}$ has eigenvalues
\[
 -1-2\cos\frac{2\pi j}{\ell},
 \qquad j=1,\ldots,\ell-1.
\]
For
\[
 j=\frac{\ell-1}{2},\qquad j=\frac{\ell+1}{2},
\]
both values equal
\[
 -1+2\cos\frac{\pi}{\ell}>0.
\]
Thus the induced subgraph $\overline{C_\ell}$ of $G$ has at least three
positive eigenvalues, contradicting interlacing and $p(G)=2$.  Hence
$\overline G$ is bipartite.

Suppose that a bipartite component $C$ of $\overline G$, with bipartition
classes of sizes $u$ and $v$, contains a cycle.  Let $\mathbf{x}$ take the
value $1/u$ on the first class, $-1/v$ on the second, and $0$ elsewhere.
Then $\mathbf{x}\perp\mathbf1$, and, since $A(G)=J-I-B$,
\[
 \mathbf{x}^TA(G)\mathbf{x}
 =
 -\|\mathbf{x}\|^2-\mathbf{x}^TB\mathbf{x}.
\]
Moreover,
\[
 \|\mathbf{x}\|^2
 =
 \frac1u+\frac1v
 =
 \frac{u+v}{uv},
\]
while
\[
 \mathbf{x}^TB\mathbf{x}
 =
 2\sum_{ij\in E(C)}x_ix_j
 =
 -\frac{2|E(C)|}{uv}.
\]
Therefore
\begin{equation}\label{eq:cycle-rayleigh}
 \frac{\mathbf{x}^TA(G)\mathbf{x}}{\|\mathbf{x}\|^2}
 =
 \frac{2|E(C)|}{u+v}-1
 \ge1,
\end{equation}
because a connected graph containing a cycle satisfies
$|E(C)|\ge|V(C)|=u+v$.

Put
\[
 \mathbf e=\frac{\mathbf1}{\sqrt n},
 \qquad
 \widehat{\mathbf{x}}=\frac{\mathbf{x}}{\|\mathbf{x}\|}.
\]
The vectors $\mathbf e$ and $\widehat{\mathbf{x}}$ are orthonormal, so
Ky Fan's principle gives
\[
 \lambda_1(G)+\lambda_2(G)
 \ge
 \mathbf e^TA(G)\mathbf e+
 \widehat{\mathbf{x}}^TA(G)\widehat{\mathbf{x}}
 \ge
 \dd(G)+1.
\]
Since $p(G)=2$,
\[
 \frac{\E(G)}2=\lambda_1(G)+\lambda_2(G)\ge\dd(G)+1.
\]
Using
\[
 \dd(G)=n-1-\frac{2\overline m}{n}
\]
and \eqref{eq:mbar-small}, we obtain
\[
 \dd(G)+1
 =
 n-\frac{2\overline m}{n}
 >
 n-1-\frac{\overline m}{n}
 =
 \frac{n-1+\dd(G)}2.
\]
Hence the desired inequality is strict in this case.  Therefore every
component of $\overline G$ is acyclic, and $\overline G$ is a forest.

A $K_2$-component of $\overline G$ produces two identical rows of $A(G)$,
contradicting the nonsingularity of $G$.  If $\overline G$ had two
components of order at least three, each would contain an induced $P_3$,
so $G$ would contain an induced $\overline{2P_3}$.  Its characteristic
polynomial is
\[
 (x+1)^2(x^2-4x+1)(x^2+2x-1),
\]
and hence it has three positive eigenvalues, again contradicting
interlacing.  Thus
\[
 \overline G=T\cup tK_1
\]
for a tree $T$ of order at least three.  If $\operatorname{diam}T\ge4$,
then $G$ contains an induced $\overline{P_5}$.  Its characteristic
polynomial is
\[
 x(x+2)(x^3-2x^2-2x+2),
\]
so its third largest eigenvalue is $0$.  Interlacing would then give
$\lambda_3(G)\ge0$, contradicting $p(G)=2$ and nonsingularity.  Hence
$T$ is a star or a double star.

We claim that in either case there exists a nonzero vector
$\mathbf{x}\perp\mathbf1$ such that
\begin{equation}\label{eq:B-witness}
 \frac{\mathbf{x}^TB\mathbf{x}}{\|\mathbf{x}\|^2}
 \le
 -1-\frac{\overline m}{n}.
\end{equation}

First suppose that $T$ is a double star with leaf counts $a,b\ge1$.  Put
\[
 S=a+b,\qquad P=ab,
\]
and let $t$ be the number of isolated vertices of $\overline G$.  Then
\[
 \overline m=S+1,
 \qquad
 n=S+t+2.
\]
Give each vertex of $T$ its degree, with opposite signs on the two
bipartition classes, and give the isolated vertices value $0$.  The
resulting vector $\mathbf{x}$ is orthogonal to $\mathbf1$, and a direct
calculation gives
\[
 \|\mathbf{x}\|^2=(S+1)(S+2)-2P,
\]
and
\[
 -\mathbf{x}^TB\mathbf{x}
 =
 2\bigl((S+1)^2-P\bigr).
\]
After clearing denominators, \eqref{eq:B-witness} is equivalent to
\[
 (S+1)(2P+St-S-2)\ge0.
\]
Since
\[
 P=ab\ge a+b-1=S-1,
\]
this follows immediately.  Indeed, if $t=0$, then $n=S+2\ge6$, so
$S\ge4$, and
\[
 2P-S-2\ge S-4\ge0.
\]
If $t\ge1$, then $S\ge2$, and
\[
 2P+St-S-2
 \ge
 S(t+1)-4
 \ge0.
\]

Now suppose that $T=K_{1,a}$ with $a\ge2$.  We have $t\ge1$, since
otherwise the center of $T$ would be an isolated vertex of $G$.  Here
\[
 \overline m=a,
 \qquad
 n=a+t+1.
\]
Put
\[
 c=1+\frac an=1+\frac{\overline m}{n}.
\]
For arbitrary real numbers $X,Y$, assign the value $X$ to the center,
the value $Y$ to every leaf, and the value
\[
 Z=-\frac{X+aY}{t}
\]
to every isolated vertex of $\overline G$.  This defines a family of
vectors $\mathbf{x}=\mathbf{x}(X,Y)$ satisfying
$\mathbf{x}\perp\mathbf1$.  Substitution gives
\[
 \mathbf{x}^T(B+cI)\mathbf{x}
 =
 \begin{pmatrix}X&Y\end{pmatrix}
 M
 \begin{pmatrix}X\\Y\end{pmatrix},
\]
where
\[
 M=
 \begin{pmatrix}
 c(1+1/t)&a(1+c/t)\\
 a(1+c/t)&ca(1+a/t)
 \end{pmatrix}.
\]
Its determinant is
\[
 \det M
 =
 \frac{a}{t^2}
 \left[c^2(t+1)(t+a)-a(t+c)^2\right].
\]
Furthermore,
\[
 a(t+c)^2-c^2(t+1)(t+a)
 =
 \frac tn\bigl[(a+t)(t(a-1)-2)-1\bigr].
\]
The last quantity is positive.  If $t=1$, then $a\ge4$ and
\[
 (a+1)(a-3)-1>0.
\]
If $a=2$, then $t\ge3$ and
\[
 (t+2)(t-2)-1=t^2-5>0.
\]
In all remaining cases $a\ge3$ and $t\ge2$, so
$t(a-1)-2\ge2$ and the claim is immediate.  Hence $\det M<0$.
Since $M$ is symmetric, there is a nonzero pair $(X,Y)$ for which
\[
 \mathbf{x}^T(B+cI)\mathbf{x}<0.
\]
Thus the corresponding nonzero vector $\mathbf{x}\perp\mathbf1$ satisfies
\[
 \frac{\mathbf{x}^TB\mathbf{x}}{\|\mathbf{x}\|^2}
 <-c
 =
 -1-\frac{\overline m}{n},
\]
which proves \eqref{eq:B-witness}.

Finally, let $\mathbf{x}$ be a unit vector satisfying
\eqref{eq:B-witness}, and put $\mathbf e=\mathbf1/\sqrt n$.  Since
$A(G)=J-I-B$ and $\mathbf{x}\perp\mathbf1$,
\[
 \mathbf e^TA(G)\mathbf e
 =
 n-1-\frac{2\overline m}{n},
\]
while
\[
 \mathbf{x}^TA(G)\mathbf{x}
 =
 -1-\mathbf{x}^TB\mathbf{x}
 \ge
 \frac{\overline m}{n}.
\]
Ky Fan's principle therefore gives
\[
 \lambda_1(G)+\lambda_2(G)
 \ge
 n-1-\frac{\overline m}{n}.
\]
Since $p(G)=2$,
\[
 \E(G)
 =
 2(\lambda_1+\lambda_2)
 \ge
 2n-2-\frac{2\overline m}{n}
 =
 n-1+\dd(G).
\]
\end{proof}

\section{Completion of the proof}\label{sec:completion}

We first make the reduction from connected to arbitrary nonsingular graphs
self-contained.

\begin{lemma}[Disconnected nonsingular graphs]\label{lem:disconnected}
Let $G$ be a disconnected nonsingular graph of order $n\ge5$, with
connected components
\[
 G=G_1\cup\cdots\cup G_c.
\]
Suppose that
\[
 \E(H)\ge |V(H)|-1+\dd(H)
\]
holds for every connected nonsingular graph $H$ of order at least five.
Then
\[
 \E(G)\ge n-1+\dd(G).
\]

Moreover, if equality in the above bound for a connected nonsingular graph
$H$ of order at least five occurs only when $H$ is complete, then equality
for $G$ occurs if and only if
\[
 G\cong\frac n2K_2.
\]
\end{lemma}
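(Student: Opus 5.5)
The plan is to use additivity of energy over components together with the fact that every component of a nonsingular graph is nonsingular. Write $n_i=|V(G_i)|$, $m_i=|E(G_i)|$, so $\E(G)=\sum_i\E(G_i)$ and $n\dd(G)=\sum_i n_i\dd(G_i)$. The target inequality is then $\sum_i\E(G_i)\ge n-1+\frac1n\sum_i n_i\dd(G_i)$.

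The first step is a uniform per-component bound $\E(G_i)\ge n_i-1+\dd(G_i)$, valid for every nonsingular component, including those of order at most four. For $n_i\ge5$ this is the hypothesis. Components of order $1$ and $3$ cannot occur: $K_1$ is singular, and every connected graph on three vertices ($P_3$ and $K_3$) is singular except $K_3$. So I will check the small nonsingular connected graphs directly:
\begin{itemize}
\item $K_2$: $\E=2=1+1$, equality.
\item $K_3$: $\E=4=2+2$, equality.
\item Connected nonsingular graphs of order $4$: $K_4$, $P_4$, the paw, and $C_4$ is singular. For $K_4$, $\E=6=3+3$.
\end{itemize}
For $P_4$ and the paw the per-component bound fails, and I expect these two to be the main obstacle. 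There I will use the weaker bound $\E(H)\ge|V(H)|-1+\dd(H)-\varepsilon_H$ with the explicit deficits $\varepsilon_{P_4}=9/2-2\sqrt5\approx0.028$ and $\varepsilon_{\mathrm{paw}}=5-\E(\mathrm{paw})\approx 0.04$; the paw energy is computed from its characteristic polynomial $x^4-4x^2-2x+1$.

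Summing the component bounds gives
\[
\E(G)\ge \sum_i(n_i-1+\dd(G_i))-\sum\varepsilon=n-c+\sum_i\dd(G_i)-\sum\varepsilon .
\]
Comparing with $n-1+\dd(G)$, it suffices to show
\[
\sum_i \dd(G_i)\Bigl(1-\frac{n_i}{n}\Bigr)\ge c-1+\sum\varepsilon .
\]
Each nonsingular component has no isolated vertex, so $\dd(G_i)\ge1$. Hence the left side is at least $\sum_i(1-n_i/n)=c-1$. Equality in this step requires every $\dd(G_i)=1$, so every component is $K_2$, and in that case all the per-component inequalities are equalities. This gives the inequality with equality for $\frac n2K_2$, provided no $P_4$ or paw component appears.

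To absorb the deficits, I will exploit the slack in $\dd(G_i)\ge1$. A $P_4$ component has $\dd=3/2$ and contributes an extra $\frac12(1-4/n)$; the paw has $\dd=2$ and contributes an extra $(1-4/n)$. For $n\ge5$ these extras are at least $1/10$ and $1/5$, which exceed $0.028$ and $0.04$ respectively. So the inequality holds, and it is strict whenever such a component occurs.

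For the equality statement, assume connected equality occurs only for complete graphs. Equality for $G$ forces each component to attain equality in its own bound and every $\dd(G_i)=1$, so all components are $K_2$; conversely $\frac n2K_2$ gives $\E=n=n-1+1$. Components such as $K_3$ or $K_4$ attain equality individually but have $\dd(G_i)>1$, which makes the slack strict, so they are excluded.
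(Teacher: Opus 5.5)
Your proposal is correct and is essentially the paper's own argument. Both write the gap as $\sum_i\varepsilon_i+\sum_i(1-n_i/n)(\dd(G_i)-1)$ and absorb the small negative deficits of $P_4$ and the paw into their slack $(1-4/n)(\dd-1)$. Both then derive $\dd(G_i)=1$, hence all components $K_2$, from equality using $n_i<n$.

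The differences are cosmetic. You use numerical values for $\E(P_4)$ and $\E(\mathrm{paw})$, whereas the paper bounds $\E(\mathrm{paw})\ge 2\sqrt6$ rigorously via $XY=|\det A|=1$; if you keep numerics, the bracketing $\lambda_1>2.1$, $\lambda_2>0.3$ from the characteristic polynomial makes this rigorous. Also, your phrase ``components of order $3$ cannot occur'' should read ``components of order $3$ other than $K_3$ cannot occur.''
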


\begin{proof}
Write
\[
 n_i=|V(G_i)|,\qquad d_i=\dd(G_i),
\]
and put
\[
 \varepsilon_i
 :=
 \E(G_i)-(n_i-1+d_i).
\]
Since $G$ is nonsingular, every component $G_i$ is nonsingular and has at
least two vertices.  Also,
\[
 \E(G)=\sum_{i=1}^c\E(G_i),
 \qquad
 \dd(G)=\frac1n\sum_{i=1}^c n_i d_i.
\]
Consequently,
\begin{equation}\label{eq:component-gap}
 \E(G)-(n-1+\dd(G))
 =
 \sum_{i=1}^c\varepsilon_i+
 \sum_{i=1}^c
 \left(1-\frac{n_i}{n}\right)(d_i-1).
\end{equation}

Since each $G_i$ is connected and has at least two vertices, $d_i\ge1$.
Thus every term in the second sum of \eqref{eq:component-gap} is
nonnegative.

If $n_i\ge5$, the hypothesis gives $\varepsilon_i\ge0$.  For orders two
and three, the only connected nonsingular graphs are $K_2$ and $K_3$,
respectively, and in both cases $\varepsilon_i=0$.  At order four, the
connected nonsingular graphs are
\[
 K_4,\qquad P_4,\qquad\text{and the paw}.
\]
Again $\varepsilon(K_4)=0$.  Thus only $P_4$ and the paw require separate
consideration.

For $P_4$,
\[
 \dd(P_4)=\frac32,
 \qquad
 \E(P_4)=2\sqrt5,
\]
so
\[
 \varepsilon(P_4)=2\sqrt5-\frac92.
\]
If $P_4$ is a component of the disconnected graph $G$, then $n\ge6$.
Its total contribution to the right-hand side of
\eqref{eq:component-gap} is therefore at least
\[
 2\sqrt5-\frac92+
 \left(1-\frac46\right)\left(\frac32-1\right)
 =
 2\sqrt5-\frac{13}{3}>0.
\]

For the paw, $\dd(\mathrm{paw})=2$.  Its inertia is $(2,2)$.  Let $X$ and
$Y$ denote the products of the positive and negative eigenvalue
magnitudes, respectively.  Since
\[
 XY=|\det A|=1
\]
and the paw has four edges,
\[
 \left(\frac{\E(\mathrm{paw})}{2}\right)^2
 =4+X+Y\ge6.
\]
Hence
\[
 \varepsilon(\mathrm{paw})
 =
 \E(\mathrm{paw})-5
 \ge2\sqrt6-5.
\]
Again $n\ge6$, so its total contribution to \eqref{eq:component-gap} is at
least
\[
 2\sqrt6-5+
 \left(1-\frac46\right)(2-1)
 =
 2\sqrt6-\frac{14}{3}>0.
\]

Thus every component makes a nonnegative contribution to
\eqref{eq:component-gap}, and therefore
\[
 \E(G)\ge n-1+\dd(G).
\]

Now assume in addition that equality for a connected nonsingular graph of
order at least five occurs only for a complete graph, and suppose that
equality holds for $G$.  The preceding estimates show that neither $P_4$
nor the paw can occur as a component.  Notice also that a complete
component $K_{n_i}$ with $n_i\ge3$ has $\varepsilon_i=0$ but
$d_i=n_i-1>1$, so it contributes strictly positively through the second
sum in \eqref{eq:component-gap}.  More generally, every remaining term in
\eqref{eq:component-gap} must vanish.  In particular,
\[
 \left(1-\frac{n_i}{n}\right)(d_i-1)=0
\]
for every $i$.  Since $G$ is disconnected, $n_i<n$, and hence $d_i=1$
for every component.  A connected simple graph of average degree one is
necessarily $K_2$.  Thus every component is $K_2$, so
\[
 G\cong\frac n2K_2.
\]
Conversely, a perfect matching satisfies
\[
 \E(G)=n,\qquad \dd(G)=1,
\]
and therefore attains equality.
\end{proof}

\begin{proof}[Proof of Theorem~\ref{thm:main}]
First suppose that $G$ is nonsingular.  By Lemma~\ref{lem:disconnected},
it is enough to consider connected graphs.  If $p=1$, then
\eqref{eq:inertia-alpha} gives $\alpha(G)=1$, so $G=K_n$.  The cases
$p=2$ and $p\ge3$ follow from Propositions~\ref{prop:p2} and
\ref{prop:pge3}.

Now suppose that $G$ is singular, and put $r=r(G)<n$.  If $r=0$, then
$G$ is empty and \eqref{eq:main} is immediate.  Assume $r>0$.  By
Lemma~\ref{lem:pseudodet}, $G$ has an induced nonsingular subgraph $H$ of
order $r$.

If $r\ge5$, delete the vertices outside $H$ one at a time.  Since $H$
remains an induced subgraph throughout the deletion process, every
intermediate graph contains the nonsingular principal submatrix $A(H)$
and therefore has rank at least $r$.  On the other hand, every intermediate
adjacency matrix is a principal submatrix of $A(G)$, so its rank is at
most $r(G)=r$.  Hence the rank remains exactly $r$ throughout.  Applying
Lemma~\ref{lem:deletion} successively gives
\[
 \E(G)-\dd(G)\ge\E(H)-\dd(H).
\]
Since $H$ is nonsingular of order $r\ge5$, the nonsingular case already
proved gives
\[
 \E(H)-\dd(H)\ge r-1.
\]

If $r<5$, enlarge $H$ to an induced subgraph $K$ of order five.  Since
$K$ contains the nonsingular principal submatrix $A(H)$, while $A(K)$ is
a principal submatrix of $A(G)$, we have
\[
 r\le r(K)\le r,
\]
so $r(K)=r$.  Deleting the vertices outside $K$ and applying
Lemma~\ref{lem:deletion} successively, followed by
Proposition~\ref{prop:n5}, yields
\[
 \E(G)-\dd(G)
 \ge
 \E(K)-\dd(K)
 \ge
 r-1.
\]
Thus \eqref{eq:main} holds in all cases.
\end{proof}

\section{Equality cases}\label{sec:equality}

We now determine all graphs for which equality holds in
Theorem~\ref{thm:main}.

\begin{lemma}[Integrality in the equality case]\label{lem:equality-integrality}
Suppose
\[
 \E(G)=r(G)+\dd(G)-1.
\]
Then $\E(G)$ is an even integer and $\dd(G)$ is an integer.  If $G$ is
nonsingular of order $n$, then
\[
 a:=\frac{n-1-\dd(G)}2
\]
is a nonnegative integer and
\[
 \frac{\E(G)}2=n-1-a.
\]
\end{lemma}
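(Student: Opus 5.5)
The plan is to use the algebraic-integer structure of the spectrum. Assume $\E(G)=r(G)+\dd(G)-1$. Since $r(G)$ is an integer and $\dd(G)=2m/n$ is rational, this equality shows that $\E(G)$ is a rational number. On the other hand, the eigenvalues of $A(G)$ are roots of the monic integer polynomial $\chi_{A}(x)$, so each $\lambda_i$ is an algebraic integer. As noted in Section~\ref{sec:prelim}, $\tr A=0$ gives
\[
 \frac{\E(G)}2=\sum_{\lambda_i>0}\lambda_i .
\]
A finite sum of algebraic integers is an algebraic integer, so $\E(G)/2$ is an algebraic integer. It is also rational, being half of the rational number $\E(G)$. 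A rational algebraic integer lies in $\mathbb Z$, hence $\E(G)/2\in\mathbb Z$ and $\E(G)$ is an even integer.

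Next, rearranging the equality gives $\dd(G)=\E(G)-r(G)+1$. Both $\E(G)$ and $r(G)$ are integers, so $\dd(G)$ is an integer.

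For the nonsingular case, put $r(G)=n$. The equality then reads $\E(G)=n-1+\dd(G)$, so
\[
 a=\frac{n-1-\dd(G)}2=\frac{2(n-1)-\E(G)}2=n-1-\frac{\E(G)}2 .
\]
This is an integer because $\E(G)/2$ is, and it is nonnegative because $\dd(G)\le n-1$. The same identity rearranges to $\E(G)/2=n-1-a$.

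The only step that is not pure bookkeeping is the integrality of $\sum_{\lambda_i>0}\lambda_i$. Two facts are needed there: the rationality of $\E(G)$, which is forced by the assumed equality, and closure of the algebraic integers under addition together with $\overline{\mathbb Q}$-integers $\cap\,\mathbb Q=\mathbb Z$. Both are standard, so I expect no genuine obstacle. One should take care to apply the argument to $\E(G)/2$ rather than to $\E(G)$, since that is what yields evenness and not merely integrality.
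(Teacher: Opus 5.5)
Your proof is correct and matches the paper's argument: $\E(G)$ is rational by the assumed equality, $\E(G)/2=\sum_{\lambda_i>0}\lambda_i$ is a rational algebraic integer and hence an integer, and the statements about $\dd(G)$ and $a$ then follow by rearranging the equality. The only differences are minor. You skip the paper's redundant intermediate step showing that $\E(G)$ itself is an algebraic integer, and you state explicitly that $a\ge0$ follows from $\dd(G)\le n-1$, which the paper calls immediate.
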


\begin{proof}
Every adjacency eigenvalue is a real algebraic integer.  Since
$|\lambda_i|$ equals either $\lambda_i$ or $-\lambda_i$, each
$|\lambda_i|$ is again an algebraic integer.  Therefore $\E(G)$ is an
algebraic integer.  Under the
equality hypothesis it is rational, so $\E(G)\in\mathbb Z$.  Moreover,
\[
 \frac{\E(G)}2=\sum_{\lambda_i>0}\lambda_i
\]
is also an algebraic integer and is rational, hence is an integer.  Thus
$\E(G)$ is even, and
\[
 \dd(G)=\E(G)-r(G)+1\in\mathbb Z.
\]
The last assertions are immediate in the nonsingular case.
\end{proof}

Mojallal \cite{MojallalExtremal2026}
characterized all graphs satisfying
$ \E(G)=2\bigl(n-\alpha(G)\bigr)$.

\begin{lemma}\label{lem:KP-equality-nonsingular}
If $G$ is connected, nonsingular, and
$
 \E(G)=2\bigl(n-\alpha(G)\bigr),
$
then $G$ is complete.
\end{lemma}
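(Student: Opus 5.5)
The idea is to use the Kumar--Pragada bound in its sharp form, together with the paper's citation of Mojallal's characterization of equality in $\E(G)=2(n-\alpha(G))$. I would read off that characterization and check which of the extremal graphs are connected and nonsingular. To keep the argument self-contained, though, I would rather give a direct spectral proof that is independent of the exact list.

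Put $\alpha=\alpha(G)$, $p=n^+(G)$ and $q=n^-(G)$. Since $G$ is nonsingular, $p+q=n$. By \eqref{eq:inertia-alpha} we have $p\le n-\alpha$ and $q\le n-\alpha$, so $n=p+q\le 2(n-\alpha)$, that is, $\alpha\le n/2$.

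Now suppose $\E(G)=2(n-\alpha)$. Then $\sum_{\lambda_i>0}\lambda_i=n-\alpha$. The sum of the positive eigenvalues is also at least $p\cdot(\det$-type lower bound$)$, so I would combine the integrality of $|\det A|\ge1$ with AM--GM in the manner of Lemma~\ref{lem:pseudodet}. Writing $P=\prod_{\lambda>0}\lambda$ and $N=\prod_{\lambda<0}|\lambda|$, we have $PN\ge1$. AM--GM gives $n-\alpha\ge pP^{1/p}$ and $n-\alpha\ge qN^{1/q}$.

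The key step is to exploit the Liu--Tang--Zhang square-energy bound \eqref{eq:LTZ}, which gives $s^\pm\ge n-1$. Together with $(\sum_{\lambda>0}\lambda)^2\ge s^+$ and Cauchy--Schwarz, $(n-\alpha)^2\le p\,s^+$, this pins down the positive and negative spectra quite tightly. Concretely, if $p\ge2$, then the positive eigenvalues other than $\rho=\lambda_1$ sum to $n-\alpha-\rho$. Since $\rho\ge\dd(G)$, the remaining mass is small. Meanwhile $q\le n-\alpha$ forces $q$ to be close to $n-\alpha$ whenever $p$ is small, and then the negative eigenvalues average about $1$. In that situation \eqref{eq:LTZ} and the log-variance Lemma~\ref{lem:logvariance} force $|\det A|<1$. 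In effect I would rerun the argument of Section~\ref{sec:reduction} with $T=n-\alpha$ exactly.

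A cleaner route, which I would try first, is purely combinatorial. Equality in \eqref{eq:KP} should propagate to induced subgraphs obtained by deleting vertices outside a maximum independent set, in the spirit of the interlacing proof of Kumar--Pragada. I would also check whether equality forces $\alpha=1$. If $\alpha=1$ then $G=K_n$, and we are done. If instead $\alpha\ge2$, I would take two nonadjacent vertices $u,v$ and derive a strict inequality. The natural tool is vertex energies: the case of equality makes each $e_v$ extremal, and hence $|A|$ must be a specific matrix.

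The main obstacle is that I do not know the internal structure of the Kumar--Pragada proof, so I cannot be sure equality transfers to subgraphs. Failing that, I would rely directly on Mojallal's characterization \cite{MojallalExtremal2026}. I would list the extremal family, which I expect consists of disjoint unions of complete graphs together with possibly complete-bipartite-type pieces. I would then verify that the only connected member with nonzero determinant is $K_n$, noting for example that $K_{s,t}$ with $s+t\ge3$ is singular.
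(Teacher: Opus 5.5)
Your opening idea, to invoke the equality characterization of \cite{MojallalExtremal2026} and check which extremal graphs are connected and nonsingular, is exactly the paper's route. But the proposal never carries out that check, and the alternatives you place ahead of it are not proofs.

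The ``direct spectral proof'' cannot be obtained by rerunning Section~\ref{sec:reduction} with $T=n-\alpha$. That reduction works only because $T<S=n-1-a$ and $\rho\ge\dd(G)=n-1-2a$ together force the non-Perron positive mass $R=S-\rho$ to be at most $a$. This is what makes $\prod x_i$ tiny and drives $\log|\det A|$ below $0$. It also relies on the two ranges of \cite{AkbariDabirianGhasemi2022} to get $\rho>h(a)$. Nothing ties $\rho$ to $n-\alpha(G)$, so no analogue of \eqref{eq:R} is available and the determinant contradiction does not follow. Your remark that $q$ is close to $n-\alpha$ applies only when $p$ is small. The subgraph-propagation and vertex-energy route is, as you concede, unsupported: nothing shows that equality in \eqref{eq:KP} passes to subgraphs or makes any $e_v$ extremal. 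Also, ``checking whether equality forces $\alpha=1$'' is just the lemma restated.

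The fallback is where the real gap lies. Your guessed extremal family (complete graphs plus ``complete-bipartite-type pieces'') is wrong. By \cite[Theorem~1.3]{MojallalExtremal2026}, the connected nontrivial equality graphs are the balanced complete multipartite graphs and the graphs $H_r(a,b)$ with $r\ge3$. The paper's proof is precisely the case check on this list:
\begin{itemize}
\item A balanced complete multipartite graph with a part of size $>1$ has repeated adjacency rows, so it is singular.
\item The same holds for $H_r(a,b)$ with $a>1$ or $b>1$.
\item The remaining case $H_r(1,1)\cong K_2\square K_r$ has no repeated rows. You must compute its spectrum $\{r,\,r-2,\,0^{[r-1]},\,-2^{[r-1]}\}$ to see that it is singular.
\end{itemize}
This last graph is a genuine connected, non-complete equality graph: $n=2r$, $\alpha=2$, and $\E=4r-4=2(n-\alpha)$. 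It is exactly the example your argument must exclude, and your plan does not account for it. Replace the speculative routes with this case check against the correct list.
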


\begin{proof}
By \cite[Theorem 1.3]{MojallalExtremal2026}, every connected nontrivial
equality graph is either a balanced complete multipartite graph or a graph
$H_r(a,b)$ with $r\ge3$.  A balanced complete multipartite graph with a
part of size greater than one has repeated adjacency rows and is singular;
hence its only nonsingular members are complete graphs.  For $H_r(a,b)$,
if $a>1$ or $b>1$, repeated rows again give singularity.  If $a=b=1$, then
$H_r(1,1)\cong K_2\square K_r$, whose spectrum contains $0$ with
multiplicity $r-1$.  Thus the only connected nonsingular equality graphs
are complete graphs.
\end{proof}

\begin{proposition}\label{prop:singular-strict}
If $G$ has order $n\ge5$ and is singular, then
\[
 \E(G)>r(G)+\dd(G)-1.
\]
\end{proposition}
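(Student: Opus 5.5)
We retrace the proof of Theorem~\ref{thm:main} in the singular case, replacing one of the deletion steps by the strict version, Lemma~\ref{lem:strict-deletion}. Let $G$ be singular of order $n\ge5$ and put $r=r(G)<n$.

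\emph{The case $r=0$.} Here $G$ is empty, so $\E(G)=0$ and $\dd(G)=0$. The right-hand side equals $-1<0$, and the inequality is strict.

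\emph{The case $r\ge1$.} By Lemma~\ref{lem:pseudodet}, $G$ contains an induced nonsingular subgraph $H$ of order $r$. Choose a target induced subgraph $K$ as follows:
\begin{itemize}
\item if $r\ge5$, take $K=H$;
\item if $r<5$, take any induced subgraph $K\supseteq H$ of order five.
\end{itemize}
In both cases $|V(K)|<n$: for $r\ge5$ because $r<n$ by singularity, and for $r<5$ we first treat $n=5$ separately below. As in the proof of Theorem~\ref{thm:main}, deleting the vertices outside $K$ one at a time preserves the rank $r$. Every intermediate graph has order at least five, so Lemma~\ref{lem:deletion} applies at each step.

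For the first deletion we use Lemma~\ref{lem:strict-deletion} instead. This is legitimate because $G$ has an edge (as $r\ge1$) and $n\ge5$. It yields
\[
 \E(G)-\dd(G)>\E(K)-\dd(K)\ge r-1.
\]
The final inequality comes from the nonsingular case of Theorem~\ref{thm:main} when $K=H$, and from Proposition~\ref{prop:n5} when $|V(K)|=5$.

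\emph{The case $n=5$ with $r<5$.} Here no deletion is possible, so strictness must come from inspecting the proof of Proposition~\ref{prop:n5}. Since $G$ is singular, $r\le4$, and we check each subcase of that proof.
\begin{itemize}
\item When $1<d<r-1$, every estimate there is strict: $(2-d)(d+1)>0$, $6>d+3$, and $3-(d-2)^2>0$.
\item When $d\ge r-1$ or $d\le1$, the inequalities $\E\ge2d$ and $\E\ge r$ are used. These give $\E\ge r+d-1$, with equality only when $d=r-1$ and $\E=2d$, or when $d=1$ and $\E=r$.
\end{itemize}

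\emph{Main obstacle: the boundary cases at $n=5$.} We rule out the two equality possibilities by hand.
\begin{itemize}
\item \emph{$\E=2\lambda_1=2d$.} Equality forces $G$ to be regular, since $\lambda_1=d$, and to have $p=1$. A regular graph with $p=1$ is complete multipartite and regular, and on five vertices this means $K_5$ or $\overline{K_5}$. $K_5$ is nonsingular, and $\overline{K_5}$ has $r=0$; both are excluded here.
\item \emph{$d=1$ and $\E=r$.} Now $m=5/2$, which is not an integer, so this cannot occur.
\end{itemize}
More cleanly, whenever equality holds one can invoke Lemma~\ref{lem:equality-integrality}: it requires $\dd(G)$ to be an integer, i.e.\ $m\in\{0,5,10\}$. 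Then $m=10$ gives $K_5$ (nonsingular), $m=0$ is handled above, and $m=5$ means $d=2$, which must be checked directly. With $d=2$ the boundary condition becomes $r=3$ with $\E=4$, or $r\le3$ from the first bound; for $r=3$ the strict estimate $5d+6>(d+2)^2$ applies.

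This finite case analysis is the delicate part of the proof.
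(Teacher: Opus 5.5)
Your argument is correct. For $n\ge6$ it is the paper's proof: strict deletion (Lemma~\ref{lem:strict-deletion}) down to $H$ when $r\ge5$, or down to a five-vertex $K\supseteq H$ when $r<5$, then the nonsingular case of Theorem~\ref{thm:main} or Proposition~\ref{prop:n5}.

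The real difference is at $n=5$. The paper assumes equality and applies Lemma~\ref{lem:equality-integrality}. Then $\dd(G)$ is an integer, hence $\dd(G)=2$. Evenness of $\E(G)$ forces $r$ odd, so $r=3$ and $\E(G)=4=2\dd(G)$. The chain $\E\ge2\lambda_1\ge2\dd$ then collapses, so $G$ is $2$-regular, i.e.\ $G\cong C_5$, which is nonsingular.

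You instead reopen the proof of Proposition~\ref{prop:n5} and observe that every estimate in the range $1<d<r-1$ is strict. You then dispose of the two boundary configurations directly: $d=1$ is impossible because $m=5/2$, and $\E=2\lambda_1=2d$ forces a regular graph with one positive eigenvalue. This avoids the algebraic-integer lemma. The cost is that it depends on the internals of that proof rather than its statement. You also import Smith's characterization of graphs with one positive eigenvalue, which is unnecessary: regularity alone makes $d$ an integer with $5d$ even, so $d\in\{0,2,4\}$, and $d=2$ gives $C_5$.

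One correction: your closing ``more cleanly'' remark is false as written. At $d=2$, $r=3$ one has $5d+6=16=(d+2)^2$, so that estimate is not strict there. In fact this case lies on the boundary $d=r-1$, where the proof of Proposition~\ref{prop:n5} uses $\E\ge2d$ rather than the $r=3$ estimate. It is exactly the configuration your first bullet already rules out. Drop the remark, or replace its last step with the regularity argument ($\E=4=2\lambda_1$ forces $\lambda_1=\dd=2$, hence $G\cong C_5$).
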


\begin{proof}
If $r(G)=0$, then $G$ is empty and the assertion is immediate.  Assume
$r:=r(G)>0$.  By Lemma~\ref{lem:pseudodet}, $G$ has an induced nonsingular
subgraph $H$ of order $r$.

Suppose first that $n>5$.  If $r\ge5$, delete the vertices outside $H$.
At least one deletion occurs, and every intermediate graph has at least one edge.
Lemma~\ref{lem:strict-deletion} therefore gives
\[
 \E(G)-\dd(G)>\E(H)-\dd(H)\ge r-1.
\]
If $r<5$, enlarge $H$ to an induced subgraph $K$ of order five.  As in the
proof of Theorem~\ref{thm:main}, $r(K)=r$.  Deleting down to $K$ and using
Proposition~\ref{prop:n5} gives
\[
 \E(G)-\dd(G)>\E(K)-\dd(K)\ge r-1.
\]

It remains to consider $n=5$.  Suppose equality holds.  By
Lemma~\ref{lem:equality-integrality}, $\dd(G)$ is an integer.  Since
$\dd(G)=2m/5$, we have $\dd(G)\in\{0,2,4\}$.  The values $0$ and $4$
correspond to the empty graph and $K_5$, respectively, and neither gives
a singular equality graph.  Hence $\dd(G)=2$.  Since $\E(G)$ is even,
equality forces $r$ to be odd.  Rank one is impossible for a nonzero
symmetric zero-diagonal matrix, so singularity gives $r=3$ and
$\E(G)=4$.  But
\[
 \E(G)\ge2\lambda_1(G)\ge2\dd(G)=4.
\]
Thus equality holds throughout, so $\lambda_1(G)=\dd(G)$ and the all-ones
vector is a Perron eigenvector.  Hence $G$ is $2$-regular.  The only
$2$-regular graph on five vertices is $C_5$, which is nonsingular, a
contradiction.
\end{proof}

The following elementary scalar check is used only in the equality analysis
for $5\le n\le10$, allowing us to avoid the computer verification used in
the proof of \cite[Theorem 2.1]{AkbariDabirianGhasemi2022}.

\begin{lemma}\label{lem:small-equality-scalar}
Let $a\in\{1,2,3\}$, let $k>a$ be an integer, and let
\[
 w\ge\max\{0,2a-k\}.
\]
Put
\[
 Z=w+a^2+k,\qquad
 \tau=\min\left\{a,\frac Z6\right\},
\]
and
\[
 F_3(k,a,w)=
 \log\frac Z2+(k-1)\log\tau-k\log k+k-a-g(w).
\]
Then
\[
 F_3(k,a,w)<0.
\]
\end{lemma}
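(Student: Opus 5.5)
The function $F_3$ is the function $F$ of \eqref{eq:F} with $h(a)$ replaced by the constant $3$. Since $3<H\le h(a)$, we have $\tau\ge$ the $\tau$ of \eqref{eq:wZtau}, so $F_3\ge F$ and Lemma~\ref{lem:scalar} cannot be invoked. Instead I will rerun the argument of Section~\ref{sec:scalar} with $h\equiv3$. This is much easier, because $a$ takes only three values and the relevant ranges of $k$ turn out to be finite. As in Section~\ref{sec:scalar}, I first enlarge the domain to all $s=\sqrt w\ge0$; this suffices.

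\textbf{Step 1: reduction in $s$.} On branch $A$ ($Z\ge6a$, $\tau=a$), formula \eqref{eq:FA-s} has numerator factor $3-a^2-k-(s-1)^2$. Since $a\ge1$ and $k\ge2$, this factor is negative, so $F_A$ strictly decreases in $s$.

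On branch $B$ ($Z\le6a$, $\tau=Z/6$), formula \eqref{eq:FB-s} shows that $F_B$ is monotone or decreasing-then-increasing on $s\ge0$. Its upper critical root $k+\sqrt{k^2+k-a^2}$ exceeds $k$. On branch $B$, however, $s^2\le6a-a^2-k\le9-k$, which is below $k^2$, so that root is never reached.

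Hence the maximum over $s$ is attained either at $s=0$ or on the boundary $Z=6a$, exactly as in Lemma~\ref{lem:s-reduction}.

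\textbf{Step 2: the boundary $Z=6a$.} Here $w=6a-a^2-k\ge0$, which forces $k\le5,8,9$ for $a=1,2,3$ respectively. A direct simplification, as in \eqref{eq:constant-boundary-F}, gives
\[
F_3=\log3-k\phi(a/k)-g(w),\qquad \phi(x)=x-1-\log x.
\]
So it suffices to show $k\phi(a/k)+g(w)>\log3$, for which $\log 3<\tfrac{11}{10}$ is enough. This is a finite list of pairs $(a,k)$.

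When $k\ge 2a$, the term $k\phi(a/k)$ alone is large: for example, $2\phi(1/2)=2\log2-1$ and $3\phi(1/3)=3\log3-2$. For the remaining pairs I add $g(w)$, using $g(s^2)\ge 3s^2/(2(2s+3))$ from the proof of Lemma~\ref{lem:constant-boundary}. Each case should then follow from \eqref{eq:log-lower} together with simple upper bounds on $\log2$ and $\log3$.

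\textbf{Step 3: the face $s=0$.} If $a^2+k\ge6a$ (branch $A$), put $r=k/a$. As in Lemma~\ref{lem:s-zero},
\[
F_3=\log\frac{a+r}{2}+a\,(r-1-r\log r).
\]
This is decreasing in $r$ once $\log r$ exceeds a small constant. So it is enough to check the smallest admissible $k$ for each $a$, and to note that the expression tends to $-\infty$ as $k\to\infty$.

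If $a^2+k\le6a$ (branch $B$),
\[
F_3=k\log\frac{a^2+k}{2k}-(k-1)\log3+k-a.
\]
This is again a short finite list of pairs $(a,k)$ with $k>a$, and each case is checked by hand.

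\textbf{Main obstacle.} The main obstacle is Step 2 for the pairs where $a/k$ is close to $1$ and $w$ is small, such as $(a,k)=(3,4)$, $(2,3)$ and $(3,5)$. There the margin over $\log3$ is narrow and the rational bounds on logarithms must be chosen carefully. I would first try the tangent-line and Padé-type bound \eqref{eq:log-lower}, and refine with a second-order series estimate only where it fails.
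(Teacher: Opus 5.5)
Steps 1 and 3 of your plan are sound. Enlarging to all $s\ge0$ is legitimate, and it even spares you the paper's three lower-endpoint checks at $w=2a-k>0$. Step 2, however, breaks down at the tightest point of the whole lemma, $(a,k)=(1,2)$, $w=3$. There
\[
F_3=\log3-2\phi(1/2)-g(3)=1-\sqrt3+\log\frac{3(1+\sqrt3)}{4}\approx-0.0147 .
\]
\begin{itemize}
\item Your claim that for $k\ge2a$ the term $k\phi(a/k)$ alone exceeds $\log3$ is false here. Your own example gives $2\phi(1/2)=2\log2-1\approx0.386$. It also fails at $(2,4)$, where $4\phi(1/2)\approx0.773$.
\item Adding $g(w)$ through $g(s^2)\ge 3s^2/(2(2s+3))$ does not rescue $(1,2)$. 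It yields only $g(3)\ge 9/(6+4\sqrt3)\approx0.696$, whereas you need $g(3)>\log3-2\log2+1\approx0.712$.
\item The bound $\log2>2/3$ from \eqref{eq:log-lower} loses about $0.053$ in $2\log2$, more than three times the available margin. Your detour through $\log3<11/10$ wastes a further tenth of that margin.
\end{itemize}
So the tools you list cannot close this case. Meanwhile, the pairs you flag as the main obstacle are comfortable: on the boundary $Z=6a$, $F_3\approx-0.18,\,-0.11,\,-0.36$ for $(a,k)=(2,3),(3,4),(3,5)$.

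What is missing is a sharp estimate for this one value. Keep $\log3-2\log2=\log\frac34$ together and prove $\log\frac{3(1+\sqrt3)}4<\sqrt3-1$ as follows:
\begin{itemize}
\item $\sqrt3<26/15$ gives $\frac{3(1+\sqrt3)}4<\frac{41}{20}$;
\item $\sqrt3>43/25$ together with the five-term exponential series gives $e^{\sqrt3-1}>e^{18/25}>\frac{41}{20}$.
\end{itemize}
This is how the paper does it.

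The paper also avoids enumerating boundary pairs. It shows $B_a'(k)=\log\frac ak+\frac1{2(1+s)}<0$ for continuous $k$, so only $k=a+1$ has to be evaluated. Your case-by-case enumeration is a valid alternative once each pair is checked with enough precision.

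Two small slips in Step 1:
\begin{itemize}
\item The branch-$A$ factor is only $\le0$, not negative; it vanishes at $a=1$, $k=2$, $s=1$. Monotonicity survives.
\item The inequality $9-k<k^2$ fails for $k=2$. There $a=1$ forces $s^2\le3<4$, so the conclusion still holds.
\end{itemize}
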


\begin{proof}
Set $s=\sqrt w$.  On the branch $Z\ge6a$, where $\tau=a$, and on the
branch $Z\le6a$, direct differentiation gives
\[
 \frac{\partial F_3}{\partial s}
 =
 \frac{s\{3-a^2-k-(s-1)^2\}}{Z(1+s)}
\]
on the first branch, and
\[
 \frac{\partial F_3}{\partial s}
 =
 \frac{s\{k^2+k-a^2-(s-k)^2\}}{Z(1+s)}
\]
on the second.  Since $a\in\{1,2,3\}$ and $k>a$, the first branch is
nonincreasing.  On the second branch, the upper critical point is
\[
 s=k+\sqrt{k^2+k-a^2}.
\]
At such a point,
\[
 Z=2k(1+s)>2k(k+1)\ge2(a+1)(a+2)>6a,
\]
so it lies outside the branch $Z\le6a$.  The lower critical point, when
positive, is a local minimum.  Hence, for fixed $a$ and $k$, a maximum can
occur only at the lower endpoint of the allowed $s$-interval or at the
branch boundary $Z=6a$.

We first consider the branch boundary.  There
\[
 s^2=6a-a^2-k
\]
and
\[
 B_a(k):=F_3(k,a,s^2)
 =
 \log3+k\log a-k\log k+k-a-g(s^2).
\]
Treating $k$ temporarily as a continuous variable,
\[
 B_a'(k)
 =
 \log\frac ak+\frac{1}{2(1+s)}.
\]
This derivative is negative throughout the feasible boundary interval.
Indeed, if $a=1$, then $k\ge2$, so
\[
 B_1'(k)\le-\log2+\frac12<-\frac16.
\]
If $a=2$, then for $3\le k\le7$ we have $s\ge1$, and therefore
\[
 B_2'(k)
 \le-\log\frac32+\frac14<-\frac{3}{20};
\]
for $7\le k\le8$,
\[
 B_2'(k)
 \le-\log\frac72+\frac12<0.
\]
If $a=3$, then for $4\le k\le8$ we again have $s\ge1$, and
\[
 B_3'(k)
 \le-\log\frac43+\frac14<-\frac1{28};
\]
for $8\le k\le9$,
\[
 B_3'(k)
 \le-\log\frac83+\frac12<0.
\]
Here all logarithmic comparisons follow immediately from
\eqref{eq:log-lower}.  Thus the largest branch-boundary value occurs at
$k=a+1$.

It remains to check the lower endpoints.  When the lower endpoint is
$w=0$ and lies on the second branch, differentiation with respect to $k$
gives
\[
 \frac{\partial F_3(k,a,0)}{\partial k}
 =
 \frac{k}{a^2+k}
 +
 \log\frac{a^2+k}{6k}.
\]
Writing $t=1+a^2/k$, this becomes
\[
 \frac1t+\log\frac t6.
\]
For the relevant values $k\ge2a$ we have $1<t\le5/2$, and the last
expression is increasing in $t$ and is at most
\[
 \frac25+\log\frac5{12}<0,
\]
again by \eqref{eq:log-lower}.  Hence the largest such endpoint values
occur at $k=2a$, and they are
\[
 1+\log3-4\log2,\qquad
 2-3\log3,\qquad
 3+6\log\frac54-5\log3,
\]
for $a=1,2,3$, respectively.  All are negative, using $3<4$, $\log2>2/3$, $\log3>1$, and
$\log(5/4)<1/4$.

If $k>6a-a^2$, the branch boundary no longer exists and $w=0$ lies on
the first branch.  There
\[
 \frac{\partial F_3(k,a,0)}{\partial k}
 =
 \frac1{a^2+k}+\log\frac ak.
\]
Since $\log(a/k)<a/k-1=-(k-a)/k$ and
$1/(a^2+k)<1/k\le(k-a)/k$, this derivative is negative.  At
$k=6a-a^2$ the point $w=0$ is precisely the branch boundary, whose value
is already negative.  Hence all first-branch lower endpoints with
$k>6a-a^2$ are negative as well.

The only lower endpoints with $w=2a-k>0$ are
\[
 (a,k,w)=(2,3,1),\ (3,4,2),\ (3,5,1).
\]
Their values are
\[
 7\log2-5\log3,
 \qquad
 1-\sqrt2+
 \log\!\left(\frac{1875(1+\sqrt2)}{4096}\right),
 \qquad
 1+\log3-4\log2.
\]
The first is negative because $\log2<7/10$ and $\log3>1$, and the third
is negative because $3<4$ and $\log2>2/3$.  For the middle expression,
use
\[
 \frac75<\sqrt2<\frac32,
 \qquad
 \frac{1875(1+\sqrt2)}{4096}<\frac43,
 \qquad
 \log\frac43<\frac13;
\]
then it is smaller than
\[
 1-\frac75+\frac13=-\frac1{15}.
\]
Thus every lower endpoint is negative.

It remains only to check the largest branch-boundary value for each $a$.
Substituting $k=a+1$ gives
\[
\begin{array}{c|c}
a&\max F_3\\
\hline
1&
\displaystyle
1-\sqrt3+
\log\!\left(\frac{3(1+\sqrt3)}4\right)
\\[2mm]
2&
\displaystyle
1-\sqrt5+
\log\!\left(\frac{8(1+\sqrt5)}9\right)
\\[2mm]
3&
\displaystyle
1-\sqrt5+
\log\!\left(\frac{243(1+\sqrt5)}{256}\right).
\end{array}
\]
These are negative by elementary rational comparisons.  Indeed,
\[
 \frac{43}{25}<\sqrt3<\frac{26}{15},
\]
and the exponential series gives
\[
 e^{18/25}
 >
 1+\frac{18}{25}
 +\frac12\left(\frac{18}{25}\right)^2
 +\frac16\left(\frac{18}{25}\right)^3
 +\frac1{24}\left(\frac{18}{25}\right)^4
 >
 \frac{41}{20}.
\]
Hence
\[
 \frac{3(1+\sqrt3)}4<\frac{41}{20}<e^{18/25}
 <e^{\sqrt3-1}.
\]
Also
\[
 \frac{11}{5}<\sqrt5<\frac94,
\]
and
\[
 e^{6/5}
 >
 1+\frac65+\frac{18}{25}+\frac{36}{125}+\frac{54}{625}
 >
 \frac{13}{4}.
\]
Thus
\[
 \frac{8(1+\sqrt5)}9<3<e^{6/5}<e^{\sqrt5-1}
\]
and
\[
 \frac{243(1+\sqrt5)}{256}
 <\frac{13}{4}
 <e^{6/5}
 <e^{\sqrt5-1}.
\]
Taking logarithms proves that all three displayed maxima are strictly
negative.
\end{proof}

\begin{proposition}\label{prop:connected-equality}
Let $G$ be connected and nonsingular of order $n\ge5$.  If equality holds
in \eqref{eq:main}, then $G\cong K_n$.
\end{proposition}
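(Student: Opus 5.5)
We argue by contradiction: assume $G$ is connected, nonsingular, of order $n\ge5$, attains equality in \eqref{eq:main}, and is not complete. By Lemma~\ref{lem:equality-integrality}, $a=(n-1-\dd(G))/2$ is then a nonnegative integer and $T:=\E(G)/2=S=n-1-a$ is an integer. Since $a=0$ forces $G=K_n$, we have $a\ge1$. As in the proof of Theorem~\ref{thm:main}, we split by the inertia $(p,q)$. If $p=1$, then $G$ is complete by \eqref{eq:inertia-alpha}. Otherwise \eqref{eq:KP} gives $n-\alpha(G)\le T$. If $n-\alpha(G)=T$, then $\E(G)=2(n-\alpha(G))$, and Lemma~\ref{lem:KP-equality-nonsingular} makes $G$ complete. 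So we may assume $\max\{p,q\}\le n-\alpha(G)\le T-1$. This is a strict improvement by one over the counterexample setting of Section~\ref{sec:reduction}, and it is what compensates for losing the strict inequality $T<S$.

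\emph{Case $p=2$.} We rerun the proof of Proposition~\ref{prop:p2} and track where the inequalities are strict. If $\overline m\ge n$, then $\E(G)\ge 2n-4>2n-2-2\overline m/n$ unless $\overline m=n$, and the case $\overline m=n$ is handled by the same chain with equality analysed directly. In the cycle-containing bipartite case the bound was already strict. In the star case we had $\det M<0$, so \eqref{eq:B-witness} is strict and $\E(G)$ exceeds the bound. In the double-star case, equality in $(S+1)(2P+St-S-2)\ge0$ would force $P=S-1$ with small $S,t$; these few configurations we exclude by hand, or handle by perturbing the witness vector to obtain strictness. Hence $p=2$ yields no equality graph.

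\emph{Case $p\ge3$.} We repeat the determinant--variance reduction with $T=S$ in place of $T<S$, using the improved bound $T\ge\max\{p,q\}+1$. This gives $q\le n-2-a$ and $p\le n-2-a$, i.e. $a\le k-1$ and $a\le q-2$. The relations \eqref{eq:R}, \eqref{eq:V-first}, \eqref{eq:V-second} and \eqref{eq:det-first} all survive with non-strict inequalities, yielding $\log|\det A|\le F(k,a,w)$ on the domain \eqref{eq:F-domain}. Lemma~\ref{lem:scalar} then gives $F<0$, which contradicts $|\det A|\ge1$.

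The delicate point is that \eqref{eq:h} relied on the Akbari--Dabirian--Ghasemi ranges, which are stated only for the strict inequality. We therefore need those ranges to give strict inequality as well, i.e. equality cannot occur when $\lambda_1\le7.11$ or $a\ge\log n+1$. If this is not available, the fallback is as follows. For $n\ge11$ the second range is the one applied, and strictness there presumably follows from their argument. For $5\le n\le10$ we have $a\le(n-2)/2$, so with $a\in\{1,2,3\}$ (and $a=4$ handled separately) we use $\rho\ge\dd(G)$ together with the weaker height $h=3$ coming from $\rho\ge3$ for non-complete dense graphs. This is exactly the purpose of Lemma~\ref{lem:small-equality-scalar}, whose $\tau=\min\{a,Z/6\}$ corresponds to $h=3$. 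Applying that lemma in place of Lemma~\ref{lem:scalar} gives $\log|\det A|<0$, again a contradiction.

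I expect the main obstacle to be justifying the bound $\rho>3$ (or the exact height needed) in the small range, together with checking that every step of the reduction remains valid once the inequalities become non-strict.
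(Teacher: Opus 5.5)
Your skeleton matches the paper's: integrality from Lemma~\ref{lem:equality-integrality}, discarding $\E(G)=2(n-\alpha(G))$ via Lemma~\ref{lem:KP-equality-nonsingular}, then rerunning the determinant--variance reduction with $T=S$. However, the step that makes the reduction applicable is left unproved. Lemma~\ref{lem:scalar} can be invoked only once $\rho>h(a)$, i.e.\ once you know an equality graph lies outside both ranges of \cite{AkbariDabirianGhasemi2022}. Those theorems give only $\E(G)\ge n-1+\dd(G)$ there, so you must show that equality is impossible inside them. ``Strictness there presumably follows from their argument'' is precisely the missing proof. Your assertion that for $n\ge11$ only the second range matters is also unsupported as written, since graphs with $\rho\le7.11$ exist at every order. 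The paper closes this gap by reopening the cited proofs:
\begin{itemize}
\item for $\rho\le7.11$ and $n\ge12$, it extracts $\E(G)\ge n-1+d+\frac{9\log|\det A|+(n-11)(d-1)}{11}$ with $d\ge2$;
\item for $n=11$, equality would force every $|\lambda_i|=1$, which is impossible for eleven eigenvalues summing to zero;
\item for $\rho>7.11$ and $d\le n-2\log n-3$, it uses $\E(G)\ge n-1+\rho-\log\rho>n-1+5$ when $d\le5$, and the strict increase of $\phi(x)=\sqrt x-\log\sqrt x$ to make the correction term in their Theorem~3.3 strictly positive when $d\ge6$.
\end{itemize}

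The other issues you flag are settled by your own bound $\max\{p,q\}\le T-1$.
\begin{itemize}
\item Since $p+q=n$ and $2T=n-1+d$, it gives $n\le 2T-2=n+d-3$. Hence $d\ge3$ and $\rho\ge d\ge3$, so your ``main obstacle'' disappears.
\item Also $2a=n-1-d\le n-4$, so $a\le3$ whenever $n\le10$, and no case $a=4$ arises. Thus Lemma~\ref{lem:small-equality-scalar} handles every equality graph with $a\le3$, whatever $n$ and $\rho$ are; its choice $\tau=\min\{a,Z/6\}$ rests only on $2\rho R\le Z$ and $\rho\ge3$.
\item Your $p=2$ case, whose $\overline m=n$ and double-star subcases you leave unfinished, is unnecessary. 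Indeed $q=n-2\le T-1=n-2-a$ forces $a\le0$; the paper uses the same inertia argument.
\item If $a\ge4$ and $a<\log n+1$, then $n>e^3$, so $n\ge21$. In that case $\rho\le7.11$ would give $d\le7.11<n-2\log n-3<d$, a contradiction. So the first range of \cite{AkbariDabirianGhasemi2022} can be avoided entirely.
\end{itemize}
Strictness in the range $d\le n-2\log n-3$ is still needed, however, and your proposal does not supply it.
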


\begin{proof}
Write $d=\dd(G)$ and let the inertia be $(p,q)$.  By
Lemma~\ref{lem:equality-integrality},
\[
 a=\frac{n-1-d}{2}\in\mathbb Z_{\ge0},
 \qquad
 T:=\frac{\E(G)}2=n-1-a.
\]
As in \eqref{eq:T-pq},
\begin{equation}\label{eq:equality-Tpq}
 T\ge n-\alpha(G)\ge\max\{p,q\}.
\end{equation}
If $a=0$, then $d=n-1$ and $G=K_n$.  Hence assume $a\ge1$.

If $p=1$, then \eqref{eq:inertia-alpha} gives $\alpha(G)=1$, again
$G=K_n$.  If $p=2$, then $q=n-2$ and
\[
 T=n-1-a\ge q
\]
forces $a=1$ and $T=q$.  Thus equality holds throughout
\eqref{eq:equality-Tpq}, so
\begin{equation}\label{lowerbound-alpha}
 \E(G)=2\bigl(n-\alpha(G)\bigr).
\end{equation}

Lemma~\ref{lem:KP-equality-nonsingular} gives $G=K_n$, contrary to $p=2$.

Now assume $p\ge3$ and put $k=p-1$.  From
\eqref{eq:equality-Tpq},
\[
 a\le k,\qquad a\le q-1.
\]
If $a=k$ or $a=q-1$, then $T$ equals $q$ or $p$, respectively, and again
we arrive at \eqref{lowerbound-alpha}.  Lemma~\ref{lem:KP-equality-nonsingular} gives a contradiction.  We may therefore
assume
\begin{equation}\label{eq:equality-interior}
 0<a<\min\{k,q-1\}.
\end{equation}

Suppose first that
\[
 \rho>7.11,
 \qquad
 d>n-2\log n-3.
\]
Then $a<\log n+1$, and, since $\rho\ge d=n-1-2a$,
\[
 \rho>h(a)
 =\max\{7.11,e^{a-1}-1-2a\}.
\]
Repeat the determinant--variance argument of
Section~\ref{sec:reduction} with $T=S$.  The only changes are
\[
 \prod_{i=1}^k x_i\le\left(\frac Rk\right)^k,
 \qquad
 U=k-a,
 \qquad
 V\ge2a-k,
\]
while the estimate
\[
 V>2\rho R-a^2-k
\]
remains strict because $k\ge2$ and
$\sum_i x_i^2<R^2$.  Hence the same definitions
\eqref{eq:wZtau} give
\[
 \log|\det A|\le F(k,a,w)<0
\]
by Lemma~\ref{lem:scalar}, impossible.

Thus an equality graph must lie in at least one of the two ranges from
\cite{AkbariDabirianGhasemi2022}.  Assume first that $\rho\le7.11$.
For $n\ge12$, the proof of
\cite[Theorem 2.1]{AkbariDabirianGhasemi2022} actually yields
\[
 \E(G)\ge n-1+d+
 \frac{9\log|\det A|+(n-11)(d-1)}{11}.
\]
Since $G$ is connected and $d$ is an integer, $d\ge2$, and the last term
is positive.  If $n=11$, equality in that proof would force equality in
the pointwise inequality of their Lemma 2.1 for every $|\lambda_i|$.
Its only equality point in $(0,7.11]$ is $1$, so every
$|\lambda_i|=1$, impossible because eleven numbers from $\{1,-1\}$ cannot
sum to zero.

It remains in this range to consider $5\le n\le10$.  If $d=2$, then $G$
is unicyclic.  Deleting a vertex on its cycle leaves a forest on $n-1$
vertices, so
\[
 \alpha(G)\ge\frac{n-1}{2}.
\]
Equality gives $\E(G)=n+1$, which is even, hence $n$ is odd.  Since
$p+q=n$, we have
\[
 \max\{p,q\}\ge\frac{n+1}{2}.
\]
On the other hand, \eqref{eq:inertia-alpha} gives
\[
 \max\{p,q\}\le n-\alpha(G),
\]
and therefore
\[
 \alpha(G)\le\frac{n-1}{2}.
\]
Combining this with the preceding lower bound yields
\[
 \alpha(G)=\frac{n-1}{2},
 \qquad
 \E(G)=2\bigl(n-\alpha(G)\bigr).
\]
Lemma~\ref{lem:KP-equality-nonsingular} then gives $G=K_3$, contrary to
$n\ge5$.

Hence $d\ge3$.  Then $\rho\ge d\ge3$, while
$a=(n-1-d)/2\in\{1,2,3\}$.  Repeating the equality version of the
determinant--variance reduction with the constant lower bound $h=3$
gives
\[
 \log|\det A|\le F_3(k,a,w)<0
\]
by Lemma~\ref{lem:small-equality-scalar}, again impossible.

Finally, suppose
\[
 \rho>7.11,
 \qquad
 d\le n-2\log n-3.
\]
If $d\le5$, the determinant bound used in
\cite{AkbariDabirianGhasemi2022} gives
\[
 \E(G)\ge n-1+\rho-\log\rho
 >n-1+5
 \ge n-1+d,
\]
since $x-\log x$ is increasing for $x>1$ and
$7.11-\log7.11>5$.

Assume now that $d\ge6$.  In the proof of
\cite[Theorem 3.3]{AkbariDabirianGhasemi2022}, the function
\[
 \phi(x)=\sqrt{x}-\log\sqrt{x}
\]
is used.  Since
\[
 \phi'(x)
 =
 \frac{\sqrt{x}-1}{2x}>0
 \qquad(x>1),
\]
the function $\phi$ is strictly increasing on $(1,\infty)$.  Hence
\[
 \phi(nd-n+1)>\phi(nd-n).
\]
The proof of \cite[Theorem 3.3]{AkbariDabirianGhasemi2022} shows that
\[
 \phi(nd-n)-d\ge0,
\]
and therefore
\[
 \phi(nd-n+1)-d>0.
\]
Consequently, the nonnegative correction term appearing in their energy
estimate is in fact strictly positive, and their argument yields
\[
 \E(G)>n-1+d.
\]
Thus equality cannot occur in this range.
\end{proof}

\begin{proof}[Proof of Theorem~\ref{thm:equality}]
The graphs listed in the theorem do attain equality.  Indeed,
\[
 \operatorname{Spec}(K_n)=\{n-1,-1^{[n-1]}\},
\]
so
\[
 \E(K_n)=2(n-1)=n+(n-1)-1.
\]
If $G=tK_2$, then $n=2t$, $r(G)=n$, $\dd(G)=1$, and
$\E(G)=n$, so equality again holds.

Conversely, suppose equality holds.  Proposition~\ref{prop:singular-strict}
shows that $G$ is nonsingular.  If $G$ is connected,
Proposition~\ref{prop:connected-equality} gives $G\cong K_n$.  If $G$ is disconnected, Proposition~\ref{prop:connected-equality}
together with the equality statement of Lemma~\ref{lem:disconnected}
gives
\[
 G\cong\frac n2K_2.
\]
This completes the characterization.
\end{proof}

\section{Consequences}\label{sec:consequences}

We first record the two consequences involving the first Zagreb index.  These
settle the conjectured lower bounds of Das and Ghalavand
\cite{DasGhalavand2025}.  Recall that
\[
 M_1(G)=\sum_{v\in V(G)}d_v^2.
\]

\begin{corollary}\label{cor:zagreb}
Let $G$ be a nonsingular graph of order $n$.  Then
\[
 \E(G)\ge\frac{M_1(G)}{m}
 \qquad\text{and}\qquad
 \E(G)\ge\frac{M_1(G)}{2m}+\frac{2m}{n}.
\]
Equality in either inequality holds if and only if $G\cong K_n$.
\end{corollary}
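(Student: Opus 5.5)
Both inequalities should follow by comparing their right-hand sides with $n-1+\dd(G)$, the lower bound that Theorem~\ref{thm:main} gives for nonsingular graphs. Since $r(G)=n$, Theorem~\ref{thm:main} yields $\E(G)\ge n-1+\dd(G)$ whenever $n\ge5$, with equality (by Theorem~\ref{thm:equality}) only for $K_n$ and $\frac n2K_2$. It therefore suffices to prove
\[
 \frac{M_1(G)}{m}\le n-1+\dd(G)
 \qquad\text{and}\qquad
 \frac{M_1(G)}{2m}+\frac{2m}{n}\le n-1+\dd(G).
\]
Since $\frac{2m}{n}=\dd(G)$, the second reduces to $M_1(G)/(2m)\le n-1$, which is immediate from $M_1=\sum_v d_v^2\le(n-1)\sum_v d_v=2m(n-1)$. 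For the first, I would use the standard bound $M_1(G)\le m\bigl(\frac{2m}{n-1}+n-2\bigr)$ (de Caen), or more simply $\sum_{uv\in E}(d_u+d_v)=M_1$ together with $d_u+d_v\le n+m'$-type estimates. The cleanest route is de Caen's inequality: it gives $M_1/m\le \frac{2m}{n-1}+n-2$, and $\frac{2m}{n-1}+n-2\le n-1+\frac{2m}{n}$ holds because $\frac{2m}{n-1}-\frac{2m}{n}=\frac{2m}{n(n-1)}\le1$. To keep things self-contained I might instead prove directly that $M_1\le m(n-1)+m\dd$ from $d_u+d_v\le n-1+|N(u)\cap N(v)|$ summed over edges, but I expect to cite de Caen.

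Equality analysis. Equality in $\E\ge M_1/(2m)+2m/n$ requires both $\E=n-1+\dd$ and $d_v=n-1$ for every non-isolated $v$; nonsingularity rules out isolated vertices, so $G=K_n$. For the first inequality, equality forces $\E=n-1+\dd$, so $G\in\{K_n,\frac n2K_2\}$. The matching gives $M_1/m=2<n=\E$, so only $K_n$ remains. Conversely, $K_n$ gives $M_1/m=2(n-1)=\E$ and $M_1/(2m)+2m/n=2(n-1)$.

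Small orders. The main obstacle is $n\le4$, where Theorem~\ref{thm:main} is unavailable and $P_4$ and the paw violate the rank bound. Here I would list the nonsingular graphs directly: $K_2$, $K_3$, $2K_2$, $K_4$, $P_4$, and the paw. For each I would compute $\E$ and $M_1$ and check both inequalities, verifying strictness except for complete graphs. For example, $P_4$ has $\E=2\sqrt5\approx4.47$, $M_1/m=10/3$, and $M_1/(2m)+2m/n=5/3+3/2$, so both hold strictly. The paw has $\E\ge2\sqrt6$ while $M_1/m=18/4$ and $9/4+2$, again strict. For $2K_2$, $\E=4$ exceeds both $2$ and $2$.
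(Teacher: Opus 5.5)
Your proposal is correct and is essentially the paper's proof. For $n\ge5$ you combine Theorem~\ref{thm:main} (with $r(G)=n$) with de Caen's inequality and $\frac{2m}{n(n-1)}\le1$, and for $n\le4$ you enumerate $K_2,K_3,2K_2,P_4$, the paw and $K_4$. There are two minor differences, both valid: you get the second bound directly from $M_1(G)\le(n-1)\cdot2m$ instead of deducing it from the first via $M_1(G)\ge(2m)^2/n$, and you settle equality in the first bound via Theorem~\ref{thm:equality} plus a check of $\frac n2K_2$, whereas the paper simply notes that equality forces $2m=n(n-1)$.

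One caution: drop the ``self-contained'' alternative. The edge inequality $d_u+d_v\le n-1+|N(u)\cap N(v)|$ is false, already for $K_2$; the correct version has $n$ in place of $n-1$. Summing the corrected version gives only $M_1(G)\le mn+3t$, where $t$ is the number of triangles, and this does not by itself yield $M_1(G)/m\le n-1+\dd(G)$. So citing de Caen is genuinely needed.
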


\begin{proof}
Suppose first that $n\ge5$.  De Caen's inequality \cite{deCaen1998} gives
\[
 M_1(G)\le
 m\left(n-2+\frac{2m}{n-1}\right).
\]
Consequently,
\[
 \frac{M_1(G)}m
 \le n-2+\frac{2m}{n-1}
 \le n-1+\frac{2m}{n}
 =n-1+\dd(G)
 \le \E(G),
\]
where the middle inequality follows from $2m\le n(n-1)$ and the last one
from Theorem~\ref{thm:main}.  Moreover, Cauchy--Schwarz gives
\[
 M_1(G)\ge\frac{(2m)^2}{n},
\]
and hence
\[
 \frac{M_1(G)}m
 \ge
 \frac{M_1(G)}{2m}+\frac{2m}{n}.
\]
Thus both inequalities follow.  Equality in either bound forces equality
throughout the preceding chain, and in particular
$2m=n(n-1)$.  Hence $G\cong K_n$.

For $n\le4$, the nonsingular graphs are, up to isomorphism,
$K_2$, $K_3$, $2K_2$, $P_4$, the paw, and $K_4$, and the assertions
follow by direct calculation.
\end{proof}

We next record structural consequences of the rank formulation.  The clique
number $\omega(G)$ is the maximum order of a complete subgraph, and the
induced matching number $\nu_{\rm ind}(G)$ is the maximum size of a
matching whose endpoints induce exactly the edges of the matching.  For a
graph without isolated vertices, the total domination number $\gamma_t(G)$
is the minimum cardinality of a set $S\subseteq V(G)$ such that every
vertex of $G$ has a neighbor in $S$.  For a connected graph,
$\operatorname{diam}(G)$ denotes its diameter.

\begin{corollary}\label{cor:rank-consequences}
Let $G$ be a graph of order $n\ge5$.  Then
\[
 \E(G)\ge \omega(G)+\dd(G)-1
\]
and
\[
 \E(G)\ge 2\nu_{\rm ind}(G)+\dd(G)-1.
\]
If $G$ has no isolated vertices, then
\[
 \E(G)\ge \gamma_t(G)+\dd(G)-1.
\]
If $G$ is connected, then
\[
 \E(G)>\operatorname{diam}(G)+\dd(G)-1.
\]

Equality in the clique-number bound holds exactly for complete graphs,
whereas equality in either the induced-matching or total-domination bound
holds exactly for perfect matchings.
\end{corollary}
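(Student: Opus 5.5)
The plan is to reduce each of the four bounds to Theorem~\ref{thm:main} by a standard rank inequality $r(G)\ge f(G)$, and then to read off the equality cases from Theorem~\ref{thm:equality}.

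\emph{Clique number.} An $\omega$-clique gives a principal submatrix $J-I$ of order $\omega$, which is nonsingular, so $r(G)\ge\omega(G)$.

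\emph{Induced matching.} An induced matching of size $\nu=\nu_{\rm ind}(G)$ gives the principal submatrix $A(\nu K_2)$, which is nonsingular of order $2\nu$. Hence $r(G)\ge2\nu_{\rm ind}(G)$.

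\emph{Total domination.} With no isolated vertices, I will invoke the known inequality $r(G)\ge\gamma_t(G)$. The direct argument goes as follows. Choose linearly independent rows $R$ of $A$ with $|R|=r$ spanning the row space. If some vertex $x$ had no neighbour in $R$, then column $x$ would vanish on the rows indexed by $R$. Since every row is a combination of those rows, column $x$ would be zero, contradicting that $x$ is not isolated. Hence $R$ is a total dominating set and $\gamma_t\le r$.

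\emph{Diameter.} For connected $G$ with a diametral path $v_0,\dots,v_D$, the induced path $P_{D+1}$ has rank at least $D$, since $A(P_{D+1})$ has nullity at most $1$. So $r(G)\ge D$. To get strictness I note that equality in the chain would force equality in Theorem~\ref{thm:main}. By Theorem~\ref{thm:equality}, $G$ would then be $K_n$ or a perfect matching. A connected perfect matching with $n\ge5$ is impossible, and $K_n$ has $r=n>1=\operatorname{diam}$. Hence the inequality is strict.

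\emph{Equality cases.} Equality in any of the three bounds forces both $r(G)=f(G)$ and equality in \eqref{eq:main}, so $G\cong K_n$ or $G\cong\frac n2K_2$.

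For the clique bound, $K_n$ has $\omega=n=r$, so it attains equality. The perfect matching has $\omega=2<n$, so it does not.

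For the induced-matching bound, $\frac n2K_2$ has $2\nu_{\rm ind}=n=r$, so it attains equality. For $K_n$ we have $\nu_{\rm ind}=1$, and $2<n$, so $K_n$ does not.

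For the total-domination bound, $\frac n2K_2$ has $\gamma_t=n$, so it attains equality. For $K_n$ we have $\gamma_t=2<n$, so $K_n$ does not.

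The only step needing real care is the total-domination rank inequality. The spanning-rows argument above should suffice; otherwise I would cite it from the literature.
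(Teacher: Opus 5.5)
Your proof is correct. It matches the paper on the clique, induced-matching and diameter bounds, including the strictness argument via Theorem~\ref{thm:equality}, and on the equality analysis. The one genuine difference is the total-domination step.

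The paper argues componentwise. It applies Theorem~3.1 of Abiad, Akbari, Fakharan and Mehdizadeh with $\lambda=0$ to each non-complete component to get $\gamma_t(H)\le |V(H)|-m_H(0)=r(H)$. It handles complete components $K_s$ by hand, and then uses additivity of both $\gamma_t$ and rank over components.

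You instead show directly that any vertex set $R$ indexing a basis of the row space of $A$ is itself a total dominating set. If some $x$ had no neighbour in $R$, column $x$ would vanish on the rows in $R$. Since every row is a combination of those rows, column $x$ would vanish everywhere, so $x$ would be isolated. This is fully rigorous, and it correctly covers $x\in R$, as total domination requires. It handles every graph without isolated vertices uniformly, with no component decomposition and no special case for complete graphs. It also makes the corollary self-contained.

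The cited theorem is stated for eigenvalue multiplicities more generally, but only the case $\lambda=0$ is needed here, and for that case your row-basis argument is the natural direct proof.
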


\begin{proof}
Each inequality follows from Theorem~\ref{thm:main} together with a
corresponding lower bound for $r(G)$.

An induced $K_{\omega(G)}$ gives a nonsingular principal submatrix of
order $\omega(G)$, and hence
\[
 r(G)\ge\omega(G).
\]
Likewise, an induced matching of size $\nu_{\rm ind}(G)$ gives a principal
submatrix equal to the adjacency matrix of
$\nu_{\rm ind}(G)K_2$, whose rank is $2\nu_{\rm ind}(G)$.  Thus
\[
 r(G)\ge2\nu_{\rm ind}(G).
\]

Suppose that $G$ has no isolated vertices.  For each connected component
$H$ of $G$ that is not complete, Theorem~3.1 of
\cite{AbiadAkbariFakharanMehdizadeh2023}, applied with the eigenvalue
$\lambda=0$, gives
\[
 \gamma_t(H)\le |V(H)|-m_H(0)=r(H).
\]
If $H=K_s$ for some $s\ge2$, then
\[
 \gamma_t(H)=2\le s=r(H).
\]
Thus $\gamma_t(H)\le r(H)$ for every connected component $H$ of $G$.
Since both total domination number and adjacency rank are additive over
components,
\[
 \gamma_t(G)\le r(G).
\]
The asserted total-domination bound now follows from
Theorem~\ref{thm:main}.

Finally, suppose that $G$ is connected and put
$D=\operatorname{diam}(G)$.  A shortest path between two vertices at
distance $D$ is an induced $P_{D+1}$.  Since
\[
 r(P_{D+1})\ge D,
\]
we obtain $r(G)\ge D$, and therefore
\[
 \E(G)\ge D+\dd(G)-1.
\]
If equality held here, then equality would also hold in
Theorem~\ref{thm:main}.  Since $G$ is connected,
Theorem~\ref{thm:equality} would give $G\cong K_n$, but
\[
 r(K_n)=n>\operatorname{diam}(K_n)=1
\]
for $n\ge5$.  Hence the diameter inequality is strict.

For the equality statements in the first three bounds, equality forces
equality in Theorem~\ref{thm:main}.  Thus $G$ is either $K_n$ or a perfect
matching.  For $K_n$,
\[
 r(K_n)=\omega(K_n)=n,
\]
whereas $\nu_{\rm ind}(K_n)=1$ and $\gamma_t(K_n)=2$.  For a perfect
matching,
\[
 r(G)=2\nu_{\rm ind}(G)=\gamma_t(G)=n.
\]
The equality characterizations follow.
\end{proof}

\section*{Acknowledgements}
The author acknowledges the use of AI tools in the preparation of this manuscript.

\bibliographystyle{amsplain}
\bibliography{references}

\bigskip

\noindent
Seyed Ahmad Mojallal, Email:
\texttt{seyed\_ahmad\_mojallal@sfu.ca},
\texttt{ahmad\_mojalal@yahoo.com}

\noindent
Department of Mathematics, Simon Fraser University, Burnaby, BC, Canada

\end{document}